\newtheorem{thm}{Theorem}[section]
\newtheorem{prop}[thm]{Proposition}
\newtheorem{lemma}[thm]{Lemma}
\theoremstyle{definition}
\newcommand{\Z}{\mathbb{Z}}
\newcommand{\mA}[0]{\mathfrak{A}}
\newcommand{\mB}[0]{\mathfrak{B}}
\newcommand{\mC}[0]{\mathfrak{C}}
\newcommand{\mD}[0]{\mathfrak{D}}
\newcommand{\mE}[0]{\mathfrak{E}}
\newcommand{\mF}[0]{\mathfrak{F}}
\newcommand{\mG}[0]{\mathfrak{G}}
\newcommand{\mH}[0]{\mathfrak{H}}
\newcommand{\mL}[0]{\mathfrak{L}}
\newcommand{\mQ}[0]{\mathfrak{Q}}
\newcommand{\mR}[0]{\mathfrak{R}}
\newcommand{\mS}[0]{\mathfrak{S}}
\newcommand{\mV}[0]{\mathfrak{V}}
\newcommand{\mX}[0]{\mathfrak{X}}
\newcommand{\mZ}[0]{\mathfrak{Z}}
\newcommand{\pr}{\mathbb{P}}
\newcommand{\E}[0]{\mathbb{E}}
\newcommand{\beq}[1]{\begin{equation}\label{#1}}
\newcommand{\enq}[0]{\end{equation}}
\newcommand{\mn}[0]{\medskip\noindent}
\newcommand{\nin}[0]{\noindent}
\newcommand{\sub}[0]{\subseteq}
\newcommand{\sm}[0]{\setminus}
\renewcommand{\dots}[0]{,\ldots,}
\newcommand{\ov}[0]{\overline}
\newcommand{\less}[0]{~~\mbox{\raisebox{-.6ex}{$\stackrel{\textstyle{<}}{\sim}$}}~~}
\newcommand{\more}[0]{~~\mbox{\raisebox{-.9ex}{$\stackrel{\textstyle{>}}{\sim}$}}~~}
\newcommand{\B}[0]{{\mathcal B}}
\newcommand{\f}[0]{{\mathcal F}}
\newcommand{\g}[0]{{\mathcal G}}
\newcommand{\h}[0]{{\mathcal H}}
\newcommand{\K}[0]{{\mathcal K}}
\newcommand{\T}[0]{{\mathcal T}}
\newcommand{\W}[0]{{\mathcal W}}
\newcommand{\Y}[0]{{\mathcal Y}}
\newcommand{\ZZZ}[0]{{\mathcal Z}}
\newcommand{\bgs}[0]{{\boldsymbol \gs}}
\newcommand{\pY}[0]{p_Y}
\newcommand{\gcY}[0]{\gc_{_Y}}
\newcommand{\bF}[0]{\boldsymbol\f}
\newcommand{\bh}[0]{\boldsymbol h}
\newcommand{\bG}[0]{\boldsymbol\g}
\newcommand{\bH}[0]{\boldsymbol\h}
\newcommand{\bT}[0]{\boldsymbol\T}
\newcommand{\bZ}[0]{\boldsymbol Z}
\newcommand{\ra}[0]{\rightarrow}
\newcommand{\ff}[0]{{\bf f}}
\newcommand{\XX}[0]{\textbf{X}}
\newcommand{\TTT}[0]{T}
\newcommand{\uu}[0]{U}
\newcommand{\ww}{\mbox{{\sf w}}}
\newcommand{\0}[0]{\emptyset}
\newcommand{\C}[2]{{{#1}\choose{{#2}}}}
\newcommand{\Cc}[0]{\tbinom}
\newcommand{\ga}[0]{\alpha }
\newcommand{\gb}[0]{\beta }
\newcommand{\gc}[0]{\gamma }
\newcommand{\gd}[0]{\delta }
\newcommand{\gD}[0]{\Delta }
\newcommand{\gG}[0]{\Gamma }
\newcommand{\gl}[0]{\lambda }
\newcommand{\gL}[0]{\Lambda}
\newcommand{\go}[0]{\omega}
\newcommand{\gO}[0]{\Omega}
\newcommand{\gs}[0]{\sigma}
\newcommand{\gz}[0]{\zeta}
\newcommand{\eps}[0]{\varepsilon }
\newcommand{\vt}[0]{\vartheta}
\newcommand{\vs}[0]{\varsigma}
\newcommand{\vp}[0]{\varphi}
\def\maxr{{\rm maxr \,\,}}
\newcommand{\gdz}[0]{\delta_0 }
\newcommand{\Rastar}[0]{~~\mbox{\raisebox{-.05ex}{$\stackrel{*}{\Longrightarrow}$}}~~}
\newcommand{\comments}[1]{}
\begin{document}

\title{Asymptotics for Shamir's Problem}

\author{Jeff Kahn}
\thanks{Supported by NSF Grant DMS1501962, BSF Grant 2014290,
and a Simons Fellowship.}
\email{jkahn@math.rutgers.edu}
\address{Department of Mathematics, Rutgers University \\
Hill Center for the Mathematical Sciences \\
110 Frelinghuysen Rd.\\
Piscataway, NJ 08854-8019, USA}

\begin{abstract}

For fixed $r\geq 3$ and $n$ divisible by $r$, let 
$\bH=\bH^r_{n,M}$ be the random $M$-edge $r$-graph on
$V=\{1\dots n\}$; that is, $\bH$ is chosen uniformly from the $M$-subsets of $\K:=\C{V}{r}$ ($:= \{\mbox{$r$-subsets of $V$}\}$).
\emph{Shamir's Problem} (circa 1980) asks, roughly,
\begin{center}
\emph{for what $M=M(n)$ is $\bH$ likely to contain a perfect matching}
\end{center}
(that is, $n/r$ disjoint $r$-sets)?

In 2008 Johansson, Vu and the author showed that this is true for $M>C_rn\log n$.  The present paper has two purposes.  First, it establishes the 
asymptotically correct version of the 2008 result:

\mn
\textbf{Theorem 1.}
\emph{For fixed $\eps>0$ and $M> (1+\eps)(n/r)\log n$,} 
\[
\pr(\bH ~\mbox{\emph{contains a perfect matching}})\ra 1 ~ 
\mbox{\emph{as} $n\ra\infty$}.
\]

Second, it begins
a proof of the definitive ``hitting time" statement:

\mn
\textbf{Theorem 2.}
\emph{If $A_1, \ldots ~$ is a uniform permutation of $\K$,
$\bH_t=\{A_1\dots A_t\}$, and
\[
T=\min\{t:A_1\cup \cdots\cup A_t=V\},
\]
then
$\pr(\bH_T ~\mbox{contains a perfect matching})\ra 1 ~
\mbox{\emph{as} $n\ra\infty$}$.}

\mn
It is shown here that Theorem~2 follows from a conditional version of Theorem~1
that will be proved elsewhere.  
The key ideas in that proof are similar to those for Theorem~1, but the argument
is a longer story, and it has seemed best to give the present
separate proof of Theorem~1, in which those ideas may appear more clearly.

\end{abstract}

\maketitle

\section{Introduction}\label{Intro}

A (simple) \emph{r-graph} (or \emph{r-uniform hypergraph}) is
a set $\h$ of $r$-subsets (\emph{edges}) of a
\emph{vertex} set $V=V(\h)$;
a \emph{matching} of such an $\h$ is a set of disjoint edges;
and a \emph{perfect matching} (p.m.)
is a matching of size $|V|/r$.
Write $\bH^r_{n,M}$ for the random $M$-edge $r$-graph on
$[n]:=\{1\dots n\}$; that is, $\bH^r_{n,M}$ is chosen uniformly from the $M$-subsets of $\K:=\C{[n]}{r}$.
(Some notation is collected at the end of this section.)


We are interested here in \emph{Shamir's Problem,} which asks, roughly,
with $n$ ranging over (large) multiples of a fixed $r$,
\begin{center}
\emph{for what $M$ is $\bH^r_{n,M}$ likely to contain a perfect matching?}
\end{center}

\nin
In what follows we will always work with a fixed $r$ and omit this
from our notation---so $\bH_{n,M}$ is $\bH^r_{n,M}$---and restrict to
$n$ divisible by $r$.


Shamir's Problem first appeared in print in
\cite{Erdos}, where Erd\H{o}s says he
heard it from Eli Shamir in 1979,
and,
following initial results in \cite{SS}, became one of the most intensively studied
questions in probabilistic combinatorics; for example, it and its graph factor
analogue (see below) were,
according to \cite[Section 4.3]{JLR},
``two of the most challenging, unsolved problems
in the theory of random structures."


For a more precise question, recall
that $M_0=M_0(n)$ is \emph{a threshold} for the property of containing a perfect matching if
\beq{ERth}
\pr(\mbox{$\bH_{n,M}$ has a perfect matching})\ra \left\{\begin{array}{ll}
0&\mbox{if $M/ M_0\ra 0$,}\\
1&\mbox{if $M/ M_0\ra \infty$.}
\end{array}\right.
\enq
This notion was introduced by Erd\"os and R\'enyi in \cite{ER}
and has been a central concern of probabilistic combinatorics since that
time (see e.g.\ \cite{JLR}).
Note \eqref{ERth} depends only on the order of magnitude of $M_0$,
though ``the threshold" is a common abuse.


A natural guess---maybe with some hindsight; see below---is that, for any
(fixed) $r$,
\beq{threshold}
\mbox{$n\log n$ is a threshold for containing a perfect matching.}
\enq
(When it matters---here it does not---$\log$ is natural logarithm.)
We think of this
as crudely expressing the idea that \emph{in the random setting}
the main obstacle to existence of a perfect matching is isolated vertices
(vertices not in any edges)---which typically disappear when $M\approx (n/r)\log n$.

Note that while \eqref{threshold} seems natural (or obvious) today,
it was not always so.  For example Erd\H{o}s \cite{Erdos} says
``... usually one can guess the answer [for random hypergraph problems]
almost immediately.
Here we have no idea ...," and \cite{SS} gives no guess as to the threshold.
It was only in \cite{CFMR} that \eqref{threshold} (in the stronger form
\eqref{cfmrconj} below) was first suggested in print,
though its likelihood
was surely recognized before then.


Following various attempts, the most successful in
\cite{FJ} and \cite{Kim}
(see also e.g.\ \cite{CFMR,Kriv2}), \eqref{threshold} was proved in \cite{JKV}:
\begin{thm}\label{jkv}
For each $r$ there is a $C_r$ such that
if $M>C_rn\log n$ then $\bH_{n,M}$ has a perfect matching
w.h.p.\footnote{``with high probability," meaning with probability tending to 1 as $n\ra \infty$}
\end{thm}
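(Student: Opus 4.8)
\mn
The plan is to pass to the binomial model and then invoke a ``spreadness implies low threshold'' principle. Since containing a perfect matching is an increasing property, it suffices to prove the statement for $\bH_{n,p}$, the random $r$-graph on $[n]$ in which each of the $\binom{n}{r}$ potential edges is present independently with probability $p$; by the usual sandwiching between $\bH_{n,M}$ and $\bH_{n,p}$ it is enough to show that for a suitable constant $C_r$ and $p=C_r\log n/n^{r-1}$ (so $\E|\bH_{n,p}|\sim (C_r/r!)\,n\log n$) the $p$-random subhypergraph $\bH_{n,p}$ of $\K$ contains a perfect matching of $\K$ w.h.p. The family $\mathcal F$ of subhypergraphs of $\K$ that contain a perfect matching is increasing, and its minimal members are exactly the perfect matchings of $\K$, each with $\ell:=n/r$ edges.

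\mn
The engine I would use is the fractional version of the Kahn--Kalai expectation--threshold conjecture: there is an absolute constant $K$ such that if $\mu$ is a $q$-spread probability measure on the subsets of a finite set --- meaning $\mu(\{T:S\subseteq T\})\le q^{|S|}$ for every $S$ --- supported on sets of size at most $\ell$, then the $p$-random subset of that set contains a member of $\mathrm{supp}(\mu)$ w.h.p.\ whenever $p\ge Kq\log\ell$. Here I would take $\mu$ to be the uniform measure on the perfect matchings of $\K$ and check that it is $q$-spread with $q=\Theta(n^{-(r-1)})$. This is a short computation: $\mu(\{M:S\subseteq M\})=0$ unless $S$ is a matching, and if $S$ is a matching of size $j$ then, using that the complete $r$-graph on $m$ vertices has $\prod_{i=1}^{m/r}\binom{ri-1}{r-1}$ perfect matchings, $\mu(\{M:S\subseteq M\})$ equals the ratio of the number of perfect matchings of $\K$ extending $S$ to the total, namely $\prod_{i=n/r-j+1}^{n/r}\binom{ri-1}{r-1}^{-1}$; since a typical factor here is $\binom{ri-1}{r-1}^{-1}=\Theta(n^{-(r-1)})$ while no factor exceeds $1$, a Stirling estimate bounds this product by $q^{j}$ with $q=C'_r/n^{r-1}$ for a suitable $C'_r$ (in fact any fixed $C'_r>e^{r-1}(r-1)!$ works once $n$ is large). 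Feeding $q=\Theta(n^{-(r-1)})$ and $\ell=n/r$ into the fractional Kahn--Kalai theorem produces a perfect matching in $\bH_{n,p}$ w.h.p.\ for $p=Kq\log\ell=\Theta(n^{-(r-1)}\log n)$, i.e.\ for $M=\Theta(n\log n)$, which is Theorem~\ref{jkv}.

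\mn
The arithmetic above is routine; the real content is the fractional Kahn--Kalai statement, and that is the main obstacle. In full generality it is due to Frankston--Kahn--Narayanan--Park, with the sharp form and cleanest proof due to Park--Pham; in the special case of the family $\mathcal F$ actually needed here it is, in essence, the main technical result of Johansson--Kahn--Vu, and a self-contained proof of Theorem~\ref{jkv} must reprove it in that case. The mechanism is to expose $\bH_{n,p}$ in stages and build the matching greedily: at each stage one either harvests from the fresh randomness an edge disjoint from the current partial matching $P$, or charges the failure to do so to the spreadness of $\mu$, which keeps the conditional measure of $\{M\supseteq P\}$ large enough that a typical fresh edge does extend $P$. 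Making this recursion terminate after $O(\log\ell)$ stages --- equivalently, keeping $C_r$ a fixed finite constant --- is the crux, and the bookkeeping needed (run through a coupling with $\mu$, as in the original argument, or through the ``minimum-fragment''/conditional-probability iteration of Frankston--Kahn--Narayanan--Park) is delicate. Everything else in the proof is soft.
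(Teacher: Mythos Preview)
Your proposal is correct, but note first that the paper does not itself prove Theorem~\ref{jkv}: it is quoted from \cite{JKV} as background, and the paper's own work is the sharper Theorem~\ref{ThmX}. Comparing your route to the original JKV argument (and to this paper's refinement of it): those proofs do not build a matching, they \emph{count} matchings, tracking $\log\Phi(\bH_t)$ through an edge-removal process as a martingale and using entropy (Shearer's lemma in \cite{JKV}, the Br\'egman-type Theorem~\ref{TCuckler} here) to bound the increments. Your route instead verifies that the uniform measure on perfect matchings is $O(n^{-(r-1)})$-spread --- your computation and the constant $e^{r-1}(r-1)!$ are correct --- and then invokes Frankston--Kahn--Narayanan--Park or Park--Pham as a black box. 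This is shorter and is now the standard derivation; what the entropy/martingale approach buys is access to the sharp constant $1/r$, which the spread machinery does not deliver. Two minor quibbles with your last paragraph: calling the JKV argument ``in essence'' the special case of the spread lemma is anachronistic (the conclusions coincide, but the mechanisms --- entropy counting versus cover refinement --- are quite different), and your sketch of the FKNP/PP proof as greedily harvesting disjoint edges stage by stage is not how those arguments actually run; the iteration is on covers (or fragments) of the family, not on the target object itself.
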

\nin
(See also \cite[Sec.\ 13.2]{Frieze-Karonski} for an exposition.)


The challenge since Theorem~\ref{jkv}
has been to show that isolated vertices
are more literally the issue.
Ideally this means proving the precise
\emph{hitting time} statement:
if $A_1\ldots ~$ is a uniform permutation of $\K$
then
w.h.p.\ the $A_i$'s include a p.m.\ as soon as they cover the vertices.
(This possibility is suggested in \cite{JKV}, but was by then an obvious guess.)

A somewhat less ambitious goal is to show
that Theorem~\ref{jkv} holds for any (fixed) $C_r>1/r$.
We may call this \emph{asymptotics of the threshold}:
it gives $M_c\sim (n/r)\log n$,
where $M_c=M_c(n)$ is \emph{the}
threshold, the least $M$ for which
\[
\pr(\mbox{$\bH_{n,M}$
has a perfect matching})\geq 1/2
\]
(which \emph{is} a threshold in the Erd\H{o}s-R\'enyi sense; see \cite{BTth}
or \cite[Theorem 1.24]{JLR}). 
It is easy to see that this asymptotic
version does follow from the hitting time statement.

The conjecture of
\cite{CFMR} is stronger than asymptotics of the threshold
but weaker than the hitting time version:
if $M= (n/r)(\log n +c_n)$, then
\beq{cfmrconj}
\pr(\mbox{$\bH_{n,M}$ has a perfect matching})\ra \left\{\begin{array}{ll}
0&\mbox{if $c_n\ra 0$,}\\
e^{-e^{-c}}&\mbox{if $c_n\ra c$,}\\
1&\mbox{if $c_n\ra\infty$.}
\end{array}\right.
\enq
Equivalently, the probability that $\bH_{n,M}$ avoids isolated vertices yet
fails to contain a perfect matching tends to zero.
For $r=2$, \eqref{cfmrconj} and the hitting time statement were shown by
Erd\H{o}s and R\'enyi \cite{ERPM} and
Bollob\'as and Thomason \cite{BT} respectively.
(So Erd\H{o}s' comment above might suggest he believed
the answer for larger $r$ would be different.)

In this paper we show that
the asymptotics of the threshold are as expected and give
the first step
in a proof of the hitting time statement
that will be completed in \cite{HT}; thus:

\begin{thm}\label{ThmX}
For fixed $\eps>0$ and $M> (1+\eps)(n/r)\log n$, $\bH_{n,M}$
has a perfect matching w.h.p.
\end{thm}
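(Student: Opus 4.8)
\bigskip
\noindent\emph{A plan for the proof.}
I would move to the binomial model and prove the statement there by a \textbf{near-perfect matching plus absorption} scheme. Since ``contains a perfect matching'' is monotone, the standard comparison of $\bH_{n,M}$ and $\bH_{n,p}$ (taking $p$ with $p\binom{n}{r}$ safely below $(1+\eps)(n/r)\log n$, e.g.\ $p=(1+\eps/2)(r-1)!\,n^{-(r-1)}\log n$) reduces matters to showing that $\bH:=\bH_{n,p}$ has a perfect matching w.h.p. At this density every vertex lies in $\Theta(\log n)$ edges and $\bH$ has no isolated vertices w.h.p.\ exactly because the relevant exponent exceeds $1$ --- which is, morally, the entire content. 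The idea is to split $\bH=\bH_1\cup\bH_2$, with $\bH_1\sim\bH_{n,(1-\eta)p}$ still above the isolated-vertex threshold (so $\eta$ is a small positive constant) and carrying the bulk, and $\bH_2$ a reserve of $\Theta(n\log n)$ edges; use $\bH_1$ to build a matching $M$ covering all but a small set $W$, and use $\bH_2$ to ``correct'' $M$ to a perfect matching.

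First I would run a R\"odl-type random greedy matching on $\bH_1$ --- using the w.h.p.\ facts that $\bH_1$ has maximum degree $O(\log n)$, pair-codegrees $O(1)$, and enough quasirandomness to keep the nibble on track --- to get $M$ with $W$ of size at most $n/(\log n)^{1/(r-1)-o(1)}$; at maximum degree $\Theta(\log n)$ this is as small as the nibble is expected to go --- only a $(\log n)^{-\gO(1)}$-fraction, \emph{not} an $n^{-\gO(1)}$-fraction. (One should also, beforehand, match away by a trivial greedy the polynomially few vertices having atypically few $\bH_2$-edges, so that every vertex of $W$ still has $\gO(\log n)$ reserve edges; this is cheap because those vertices form a sublinear set.) The remaining task is to incorporate $W$, and here an absorbing structure enters --- but it cannot be a self-contained vertex block matched internally, since for \emph{every} $S\sub V$ with $|S|=o(n)$ one has $|\bH\cap\binom{S}{r}|\asymp|S|^{r}n^{-(r-1)}\log n=o(|S|\log|S|)$, so no such $S$ has a perfect matching inside $\bH$ by itself (and for the same reason Theorem~\ref{jkv} is of no use on a residual piece). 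Instead the structure must be \emph{threaded through} $V$: a reserved absorbing/rotator configuration of $\bH_2$-edges (untouched by the nibble) inserted into $M$, whose local switches shuttle uncovered vertices along and finally swallow all of $W$, its vertices being matched cooperatively with $M$. Constructing it amounts to showing that w.h.p.\ $\bH_2$ --- in which a typical vertex still has $\gO(\log n)$ edges --- contains around every vertex $(\log n)^{\gO(1)}$ near-disjoint local gadgets (a first-moment-plus-concentration count for sparse, hypertree-shaped configurations), enough to assemble a structure flexible enough to absorb an arbitrary $(\log n)^{-\gO(1)}$-fraction of $V$; the extra budget $\eta p\binom{n}{r}=\Theta(n\log n)$ pays for all of this without moving the constant past $1+\eps$.

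The step I expect to be the real obstacle is exactly this \textbf{endgame at the critical density}. With minimum degree only $\Theta(\log n)$, the large-degree inputs that make absorption routine for dense hypergraphs are gone and must be recovered from the randomness by delicate sparse local-gadget estimates; simultaneously the nibble leaves a polylog-fraction of vertices rather than a polynomially small set, so the absorbing structure has to be correspondingly large and flexible; and density conservation forces the reserve $\bH_2$ to be of low density, hence to carry its own sprinkling of atypical vertices to deal with (as above). Making the sizes, degrees, codegrees and flexibility parameters all fit together --- so that the nibble, the structure, and the final corrections coexist and all the $(\log n)$-power budgets balance --- is where essentially all the work lies; the constants $\eps/2$ and $\eta$ fixed above are precisely the slack set aside to pay for $\bH_2$ and for the inefficiency of the greedy phase. (A different route would push the entropy method of \cite{JKV}, or a spread-measure argument, to the sharp constant, but the genuine loss in those arguments' inequalities makes that look no easier.)
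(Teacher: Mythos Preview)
Your plan is entirely different from what the paper does. There is no nibble and no absorption here; the paper instead proves the stronger counting statement, Theorem~\ref{ThmX'}: w.h.p.\ $\Phi(\bH_{n,M})$ is within an $e^{o(n)}$ factor of its mean. The argument tracks $\log\Phi(\bH_t)$ as edges are deleted one at a time from $\K$, running a martingale on the increments $\xi_i$ (the fraction of perfect matchings through the deleted edge). Concentration is routine \emph{provided} the $\xi_i$ stay $O(\gc_i)$, i.e.\ no edge carries much more than its share of matchings; this is the property $\mB$, and establishing it is the real content. That is done via a Br\'egman-type entropy inequality (Theorem~\ref{TCuckler}) which recovers the term $-\gL=-(r-1)n/r$ absent from Shearer's lemma: once $\log\Phi$ is known (inductively) to be within $o(n)$ of its maximum, the entropy bound forces the edge-at-$v$ distribution of a uniform perfect matching to be nearly uniform for almost every $v$, and a short bootstrap through auxiliary properties $\mE$, $\mF$, $\mC$ upgrades this to $\mB$. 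So the counting formulation is not a bonus but the engine. Your closing parenthetical dismisses pushing the entropy method of \cite{JKV} to the sharp constant as ``no easier''; in fact that is precisely what the paper does, and Theorem~\ref{TCuckler} is exactly where the formerly $O(n)$ loss becomes $o(n)$.

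On your own plan: you have correctly located the obstacle and are right that it is essentially the whole difficulty, but the proposal does not overcome it. With only $(\log n)^{O(1)}$ hypertree gadgets per vertex and a leftover of size $|W|\asymp n/(\log n)^{1/(r-1)}$, the standard absorbing arithmetic breaks: greedy absorption needs, per vertex of $W$, a supply of vertex-disjoint gadgets comparable to $|W|$, and $(\log n)^{O(1)}\ll n/(\log n)^{O(1)}$ by many orders of magnitude. So one cannot absorb $W$ one vertex at a time from a fixed reservoir; some genuinely different mechanism---iterated absorption shrinking $W$ geometrically, a template/robust-matching layer, cascading switches along long chains---must be designed and shown to exist w.h.p.\ at minimum degree $\Theta(\log n)$. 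Your ``threaded through $V$'' language gestures at this but supplies no construction, no parameters, and no proof that the $(\log n)$-power budgets can be made to close. That is a real gap, not bookkeeping.
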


\begin{thm}\label{ThmY}
If $A_1, \ldots ~$ is a uniform permutation of $\K$,
$\bH_t=\{A_1\dots A_t\}$, and
\[
T=\min\{t:A_1\cup \cdots\cup A_t=V\}
\]
(the \emph{hitting time}),
then
$\bH_T$ has a perfect matching w.h.p.
\end{thm}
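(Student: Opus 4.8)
The plan is to derive Theorem~\ref{ThmY} from Theorem~\ref{ThmX} (or, more precisely, from the conditional strengthening of Theorem~\ref{ThmX} mentioned in the abstract, which we may take as given) by a short coupling-and-conditioning argument. First I would record the basic behavior of the hitting time $T$. Standard coupon-collector estimates for the cover time of $V$ by random $r$-sets show that $T$ is sharply concentrated around $(n/r)\log n$: writing $M^\pm=(1\pm\eps)(n/r)\log n$, we have $\pr(M^-<T<M^+)\to 1$. Indeed, the probability a fixed vertex $v$ is uncovered after $t$ edges is $\binom{n-1}{r}^t/\binom{n}{r}^t=(1-r/n)^t\approx e^{-rt/n}$, which is $o(1/n)$ once $t>(1+\eps)(n/r)\log n$ and is $\gg 1/n$ once $t<(1-\eps)(n/r)\log n$; a first/second moment argument on the number of uncovered vertices then gives the concentration of $T$.

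Next I would set up the comparison between $\bH_T$ and a uniform model $\bH_{n,M}$ in the right range. The point is that, conditioned on $T=t$, the hypergraph $\bH_t$ is \emph{not} uniform on $t$-subsets of $\K$: it is uniform conditioned on the event that its edge set covers $V$ but that no proper initial segment does. However, the natural move is to forget the ``no proper initial segment'' part: conditioned on $T\le t$, the set $\bH_t=\{A_1\dots A_t\}$ (as an unordered set) is distributed exactly as $\bH_{n,t}$ conditioned on covering $V$. So if we knew that $\bH_{n,t}$, conditioned on having no isolated vertex, has a perfect matching w.h.p.\ for every $t$ in the window $(M^-,M^+)$, we could argue: $\bH_T=\bH_{t^*}$ for the random value $t^*=T$, and on the likely event $T\in(M^-,M^+)$ we have that $\bH_T$ is (conditioned on covering $V$, which it does by definition) a copy of $\bH_{n,t^*}$ conditioned on no isolated vertex; hence it has a p.m.\ w.h.p. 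Summing the failure probability over the $O(n\log n)$ relevant values of $t$ — each $o(1/(n\log n))$ by a sufficiently quantitative version of the conditional theorem — closes the argument, after adding $\pr(T\notin(M^-,M^+))=o(1)$.

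The one technical subtlety I would flag is the gap between ``$\bH_{n,t}$ w.h.p.\ has a p.m.'' and ``$\bH_{n,t}$ conditioned on no isolated vertex w.h.p.\ has a p.m.''; this is exactly where Theorem~\ref{ThmX} is not by itself enough, and where the conditional version proved in \cite{HT} is needed. Conditioning on the (mildly unlikely, for $t$ near $M^-$) event of having no isolated vertex could in principle distort the distribution enough to destroy a p.m., and Theorem~\ref{ThmX} gives no control there; the conditional theorem asserts precisely that it does not, with a failure probability small enough to survive the union bound over $t\in(M^-,M^+)$. Equivalently — and this is the cleaner way to phrase what must be imported from \cite{HT} — the probability that $\bH_{n,M}$ avoids isolated vertices yet has no perfect matching is $o(1/(n\log n))$ uniformly for $M\in(M^-,M^+)$.

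The main obstacle, then, is not in the present reduction — which is a routine coupon-collector estimate plus a union bound — but in isolating and stating the conditional strengthening of Theorem~\ref{ThmX} in a form strong enough (a quantitative, uniform-in-$M$ bound on the ``no isolated vertex but no p.m.'' probability) to feed this argument, while honestly deferring its proof to \cite{HT}. So the write-up here will consist of: (i) the concentration of $T$; (ii) the identification of $\bH_t\mid\{T\le t\}$ with $\bH_{n,t}\mid\{\text{covers }V\}$; (iii) the statement of the conditional theorem as a black box; and (iv) assembling (i)--(iii) via a union bound over $t$.
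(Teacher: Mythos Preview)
Your reduction is internally coherent, but it rests on a black box that is \emph{not} the one the paper supplies, and the paper's own reduction is genuinely different. The ``conditional version of Theorem~\ref{ThmX}'' proved in \cite{HT} is Theorem~\ref{ThmZ}: $\bH_{n,M}$ conditioned on $\{d(x)\ge\gd_x\ \forall x\}$ with $\gd_x\sim\eps\log n$ has (many) p.m.s w.h.p. It is \emph{not} ``$\bH_{n,M}$ conditioned on having no isolated vertex has a p.m.\ w.h.p.,'' and it carries no quantitative $o(1/(n\log n))$ rate. Your union bound over the $O(n\log n)$ values of $t$ in the window needs exactly that rate, and your conditioning (minimum degree $1$, the most distorting regime) is precisely what the paper's reduction is engineered to avoid. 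Note too that the qualitative version of your black box, $\pr(\bH_{n,t}\text{ covers }V,\ \text{no p.m.})=o(1)$, is already \emph{implied} by Theorem~\ref{ThmY} (since $\{\bH_t\text{ covers }V,\ \bH_t\text{ no p.m.}\}\subseteq\{\bH_T\text{ no p.m.}\}$), so invoking it comes close to circularity; it is the quantitative upgrade that carries the content, and that is not on offer.

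The paper's reduction (Section~\ref{Reduction}) is a two-round exposure. One first reveals $\bG_\gs$ for $\gs$ slightly below threshold and identifies the small set $W_\gs$ of vertices with degree below $\gd_0=\lfloor\eps\log n\rfloor$. For each $x\in W_\gs$ one then reserves the first edge $A_x\ni x$ that appears in $\bH_T$; Lemma~\ref{stoppingLemma} shows these $A_x$ are w.h.p.\ pairwise disjoint. Deleting the vertices covered by the $A_x$'s and passing to $\bH^*:=\bG_\gs[W]$ on the remainder $W$, one checks that (conditioned on what has been revealed) $\bH^*$ is distributed as $\bH_{W,\gs}$ conditioned on $\{d(x)\ge\gd_x\ \forall x\in W\}$ with $\gd_x\in\{\gd_0-1,\gd_0\}$---exactly the hypothesis of Theorem~\ref{ThmZ}. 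Since the $A_x$'s together with any p.m.\ of $\bH^*$ form a p.m.\ of $\bH_T$, one has $\Phi(\bH_T)\ge\Phi(\bH^*)$, and a \emph{single} w.h.p.\ application of Theorem~\ref{ThmZ} finishes; no union bound over $t$ is needed. The point of this detour is that conditioning on degrees $\gtrsim\eps\log n$ is a far milder perturbation of the uniform model than conditioning on degrees $\ge 1$, which is what makes Theorem~\ref{ThmZ} tractable.
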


\nin

[We note in passing that Theorem~\ref{ThmX}
is equivalent to its analogue for
$\bH_{n,p}=\bH^r_{n,p}$ (the random $r$-graph on $[n]$ in which each edge is present
with probability $p$, independent of other choices):
\begin{thm} \label{ThmXp}
For fixed $\eps>0$ and $p > (1+\eps)\C{n-1}{r-1}^{-1}\log n$,
$\bH_{n,p}$ has a perfect matching w.h.p.
\end{thm}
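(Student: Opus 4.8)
The plan is to deduce Theorem~\ref{ThmXp} from Theorem~\ref{ThmX} (and conversely), via the textbook passage between the uniform model $\bH_{n,M}$ and the binomial model $\bH_{n,p}$: concentration of the binomial edge count together with monotonicity of the property ``contains a perfect matching.''

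First I would record the bookkeeping identity $\C{n}{r}=\tfrac nr\C{n-1}{r-1}$, so that, writing $N=\C{n}{r}$ and noting $|E(\bH_{n,p})|\sim\mathrm{Bin}(N,p)$, the hypothesis $p>(1+\eps)\C{n-1}{r-1}^{-1}\log n$ becomes $\mean{|E(\bH_{n,p})|}=Np>(1+\eps)(n/r)\log n$. I would then fix $M_0=\lceil(1+\eps/2)(n/r)\log n\rceil$ and observe, by a routine Chernoff bound, that $\pr(|E(\bH_{n,p})|\ge M_0)\to1$.

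Next I would use the elementary facts that, conditioned on $|E(\bH_{n,p})|=M$, the graph $\bH_{n,p}$ has the law of $\bH_{n,M}$, and that $q(M):=\pr(\bH_{n,M}\text{ has a p.m.})$ is non-decreasing in $M$ (seen from the monotone coupling in which $\bH_{n,M}$ is the first $M$ edges of a uniformly random ordering of $\K$, under which a perfect matching, once present, persists). Conditioning on the edge count then gives
\beq{transfer}
\pr(\bH_{n,p}\text{ has a p.m.})\;\ge\; q(M_0)\,\pr(|E(\bH_{n,p})|\ge M_0),
\enq
and since $M_0>(1+\eps/3)(n/r)\log n$, Theorem~\ref{ThmX} makes $q(M_0)\to1$; with the concentration bound the right-hand side of \eqref{transfer} tends to $1$, proving Theorem~\ref{ThmXp}.

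For the reverse implication --- which is what ``equivalent'' requires --- I would run the same steps with the inequalities reversed: given $M>(1+\eps)(n/r)\log n$, take $p=(1+\eps/2)\C{n-1}{r-1}^{-1}\log n$, so that $\mean{|E(\bH_{n,p})|}=(1+\eps/2)(n/r)\log n<M$ and hence $\pr(|E(\bH_{n,p})|\le M)\to1$; then the coupling $\bH_{n,p}\subseteq\bH_{n,M}$ on $\{|E(\bH_{n,p})|\le M\}$, combined with monotonicity, yields $\pr(\bH_{n,M}\text{ has a p.m.})\ge\pr(\bH_{n,p}\text{ has a p.m.}\text{ and }|E(\bH_{n,p})|\le M)\to1$ by Theorem~\ref{ThmXp}. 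Nothing here is a genuine obstacle; the only thing to get right is the direction of the slack --- one must shrink the multiplicative constant from $1+\eps$ to $1+\eps/2$ \emph{before} passing through the binomial fluctuation, and use ``edge count $\ge M_0$'' in the $M\to p$ direction but ``edge count $\le M$'' in the $p\to M$ direction.
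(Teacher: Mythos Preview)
Your argument is correct and is exactly the standard passage between $\bH_{n,M}$ and $\bH_{n,p}$ via monotonicity and concentration of the edge count; this is precisely what the paper invokes, though it does so by citation (``See e.g.\ Propositions~1.12 and~1.13 of~\cite{JLR} for the equivalence'') rather than writing it out. So your proposal is essentially the same approach, just made explicit.
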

\nin
See e.g.\ Propositions 1.12 and 1.13 of \cite{JLR} for the equivalence.]

The story of these results ran
very much contrary to expectations.  The author had felt since \cite{JKV}
that a proof of Theorem~\ref{ThmX}
might not be out of the question (maybe a minority opinion),
but
that Theorem~\ref{ThmY} was probably hopeless;
but in retrospect it is the former that was
the bigger step.

Theorem~\ref{ThmY}
is proved by reducing to a statement like
Theorem~\ref{ThmX},
but in a conditional space where even
routine points from the proof of Theorem~\ref{ThmX} are
not straightforward;
so the present organization, with its separate proof of the now
subsumed Theorem~\ref{ThmX}, is intended to focus
on what seem the most
important points.
(It should also make the proof of Theorem~\ref{ThmY} in
\cite{HT} easier to follow, and will somewhat shorten \cite{HT}, since
some of what we do here \emph{can} be used there directly.)

As in \cite{JKV} our approach to Theorems~\ref{ThmX} and \ref{ThmY}
depends crucially on working with counting
versions;
with $\Phi(\h)$ denoting the number of perfect matchings of $\h$,
the corresponding stronger statements are:
\begin{thm} \label{ThmX'}
For fixed $\eps>0$
and $M> (1+\eps)(n/r)\log n$,
w.h.p.
\beq{MainIneq}
\Phi(\bH_{n,M}) > \left[e^{-(r-1)}rM/n\right]^{n/r}e^{-o(n)} .
\enq
\end{thm}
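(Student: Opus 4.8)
The plan is to obtain the lower bound on $\Phi(\bH_{n,M})$ via a second-moment / entropy argument in the guise of the \emph{absorbing-type} or \emph{spread measure} framework, but the cleanest route—following \cite{JKV}—is to reduce the counting statement to an \emph{entropy estimate} for a random perfect matching of $\bH_{n,M}$. Concretely, I would run the argument in the $\bH_{n,p}$ model with $p$ just above $\C{n-1}{r-1}^{-1}\log n$ (legitimate by the equivalence noted after Theorem \ref{ThmXp}, and one transfers the counting version the same way), condition on the high-probability event $\g$ that $\bH_{n,p}$ has no isolated vertices and is otherwise ``typical'' (degree sequence concentrated, no small dense configurations, the relevant local-sparsity conditions), and show that on $\g$ one has $\mean{\Phi(\bH_{n,p}) \mid \g} $ of the claimed order \emph{and} $\Phi$ concentrated. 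Since $\mean{\Phi(\bH_{n,p})} = \C{n}{r,\ldots,r}\cdot \frac{1}{(n/r)!}\, p^{n/r}$ is already $\left[e^{-(r-1)}rp\C{n-1}{r-1}\right]^{n/r}e^{-o(n)} = \left[e^{-(r-1)}rM/n\right]^{n/r}e^{-o(n)}$ by Stirling (the clean identity $\C{n}{r,\ldots,r}/(n/r)! = \left[ (n/r)!\, (r!)^{n/r}\right]^{-1} n! = [e^{-(r-1)}\cdot(n/r)\cdot e^{o(1)}]^{n/r}$ up to the $e^{-o(n)}$ slack), the content is entirely a \emph{lower} bound matching the first moment up to $e^{-o(n)}$.

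The key steps, in order, are: (1) Fix the good event $\g$ and verify $\pr(\g)\to1$ using standard random-hypergraph estimates, choosing $\g$ to include everything the counting argument consumes. (2) Set up a \emph{random greedy / nibble-with-entropy} exposure: build a perfect matching of $\bH_{n,p}$ by a semi-random process that selects edges in rounds covering a $(1-o(1))$-fraction of vertices, then finish with an absorbing structure reserved in advance; track, along the process, the conditional entropy $H(\bgs)$ of the random matching $\bgs$ it produces, and use $\Phi(\bH_{n,p}) \ge e^{H(\bgs)}$ (the entropy of any distribution supported on perfect matchings is a lower bound for $\log$ of their number). (3) Show that at each round the per-vertex entropy increment is $(1-o(1))\log(rp\C{n-1}{r-1}/n) - (r-1) + o(1)$ by a local first-moment computation: given the partial matching, a typical uncovered vertex lies in $\sim rp\C{n-1}{r-1}$ available edges, and the $1/e^{r-1}$ and the $r$-to-$n$ bookkeeping come from the usual ``$r$ vertices consumed per edge, each ordering overcounted'' accounting, exactly as the first moment predicts. (4) Sum the increments over the $\sim n/r$ rounds to get $H(\bgs) \ge (n/r)\log(e^{-(r-1)}rM/n) - o(n)$, then convert to the high-probability statement for $\Phi$ itself via concentration (Azuma/Talagrand on the edge-exposure or a direct second-moment bound on $\Phi$ restricted to matchings compatible with the reserved structure, which is where conditioning on $\g$ keeps variance under control).

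The main obstacle is step (3)–(4): ensuring the entropy increments stay at the \emph{first-moment rate} uniformly, i.e.\ that the semi-random process does not ``waste'' entropy by driving the distribution of the remaining uncovered vertex set or the residual degree sequence away from typical. This is precisely the difficulty that made \cite{JKV} hard and is the reason \eqref{MainIneq} carries an $e^{-o(n)}$ rather than a sharp constant: controlling the process requires a careful choice of round length and a self-correcting (martingale) argument showing the conditional available-edge counts remain $\sim rp\C{n-1}{r-1}$ throughout, plus handling the endgame where only $o(n)$ vertices remain—there the local estimates degrade and one must fall back on the absorbing reservoir set aside in step (2), whose existence (with the right pseudorandomness to complete \emph{any} small leftover matching) is itself a nontrivial lemma but, crucially, only costs $e^{o(n)}$ in the count and so does not affect the stated bound. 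I expect the bulk of the real work to be the ``distribution stays typical'' claim and the clean packaging of the entropy bookkeeping so that the $e^{-(r-1)}rM/n$ falls out without slack beyond $e^{-o(n)}$.
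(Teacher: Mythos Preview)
Your plan is quite different from the paper's and has a genuine gap. The paper does not build a perfect matching forward via nibble-plus-absorber; it runs the process \emph{backward}: start from $\bH_0=\K$, remove random edges one at a time, and track $\log\Phi_t=\log\Phi_0+\sum_{i\le t}\log(1-\xi_i)$, where $\xi_i$ is the fraction of p.m.s of $\bH_{i-1}$ containing the removed edge. Since $\E[\xi_i\mid A_1,\dots,A_{i-1}]=\gc_i$ exactly, $\log\Phi_T$ is controlled by a martingale once one establishes $\xi_i=O(\gc_i)$ (property $\mB$). Proving $\mB$ is the heart of the argument: a new Br\'egman-type entropy upper bound (Theorem~\ref{TCuckler}) recovers the crucial $-\gL=-(r-1)n/r$ and, combined with the lower bound $\mA$, forces almost every edge to carry its fair share of p.m.s (property $\mE$); a delicate resampling (Section~\ref{PLF1}) then lifts $\mE$ to $\mF$ and thence to $\mB$.

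Your forward scheme is circular at the point where it matters. The inequality $\Phi\ge e^{H(\bgs)}$ requires $\bgs$ to be supported on \emph{perfect} matchings; but a semi-random nibble in $\bH_{n,p}$ at threshold is not known to complete, and showing it does is essentially Shamir itself. You defer completion to an ``absorbing structure reserved in advance,'' yet absorbers capable of finishing \emph{any} $o(n)$-vertex leftover are not known to exist in $\bH_{n,p}$ at this density---standard absorber constructions require minimum-degree conditions far beyond what is available here, and establishing them would itself be a result comparable to the theorem. Even granting absorbers, your step~(3)---that available-edge counts stay at the first-moment rate throughout so no entropy is lost---carries all the content, and ``a self-correcting martingale argument'' is not a mechanism: in the paper's language this is precisely the statement that the intermediate hypergraphs remain uniformly rich in p.m.s, which is what the full machinery of Sections~\ref{Ent}--\ref{PLF1} (and in particular the $-\gL$ from Theorem~\ref{TCuckler}) is built to deliver. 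Finally, the second-moment fallback in your step~(4) does not work as stated: $\Phi(\bH_{n,p})$ is not concentrated about its mean (positive mass at $0$ from isolated vertices), and conditioning on $\g$ does not by itself tame the variance without already knowing something close to the conclusion.
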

\begin{thm}\label{ThmY'}
For $\bH_t$ and $T$ as in Theorem~\ref{ThmY}, w.h.p.
\beq{TY'bd}
\Phi(\bH_T) > \left[e^{-(r-1)}\log n\right]^{n/r}e^{-o(n)} .
\enq
\end{thm}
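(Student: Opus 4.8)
The plan is to deduce Theorem~\ref{ThmY'} from a conditional form of Theorem~\ref{ThmX'} --- one asserting that the lower bound \eqref{MainIneq} survives a mild conditioning of $\bH_{n,t}$ --- whose proof, running the machinery of the present paper in the conditioned space, is deferred to \cite{HT}; the reduction itself is short. Its one preliminary ingredient is the concentration of the covering time $T$. A union bound gives $\pr(T>(n/r)(\log n+\go))\le e^{-\go}$ for every $\go$ (a fixed vertex is missed by $\bH_t$ with probability $(1-r/n)^t\le e^{-rt/n}$), while a routine second-moment estimate for the number of vertices missed by $\bH_t$ --- whose mean is $\approx e^{\go}$ at $t=(n/r)(\log n-\go)$, with negatively correlated terms --- gives $\pr(T<(n/r)(\log n-\go))=o(1)$ as soon as $\go\ra\infty$. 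Hence w.h.p.\ $rT/n=\log n+o(\log n)$; in particular $T$ lies w.h.p.\ in the \emph{typical range} $\{t:|rt/n-\log n|\le\log\log n\}$, and over that range $[e^{-(r-1)}rt/n]^{n/r}$ agrees with $[e^{-(r-1)}\log n]^{n/r}$ up to a factor $e^{o(n)}$, so \eqref{TY'bd} is just \eqref{MainIneq} at $M=T$, modulo the $e^{-o(n)}$ slack.

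Next I would identify the law of $\bH_T$. Conditioned on $|\bH_T|=t$, an elementary computation with the random ordering $A_1,A_2,\ldots$ shows that $\bH_T$ is distributed as $\bH_{n,t}$ conditioned on covering $V$, with probabilities reweighted by a factor equal to the number of \emph{critical} edges of the $r$-graph (those $e$ for which deletion of $e$ leaves a vertex uncovered). The conditional form of Theorem~\ref{ThmX'} to be established in \cite{HT} is exactly the assertion that, for every $t$ in the typical range, an $r$-graph $\bH$ drawn from this conditioned, reweighted law satisfies $\Phi(\bH)>[e^{-(r-1)}rt/n]^{n/r}e^{-o(n)}$ with probability $1-n^{-\go(1)}$. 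That this is plausible --- and, more to the point, provable by adapting the present argument --- rests on the conditioning being only a gentle distortion of the ordinary model: the reweighting factor is at most the number of degree-$1$ vertices, which has mean $\approx\log n$ and Poisson-type tails, and over the typical range $\pr(\bH_{n,t}\text{ covers }V)\ge n^{-O(1)}$, so the conditioning inflates the probability of any event by at most a polynomial factor.

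Granting the conditional theorem, Theorem~\ref{ThmY'} follows at once: the probability that $\Phi(\bH_T)$ fails its bound is at most $\pr(T\text{ atypical})$ plus $\sum_{t}\pr(|\bH_T|=t)$ times the conditional failure probability at $t$; by the first paragraph the former is $o(1)$, and by the conditional theorem each conditional failure probability over typical $t$ is $n^{-\go(1)}$, so the latter is $o(1)$ as well. Since $rT/n=\log n+o(\log n)$ w.h.p., this gives, w.h.p.,
\[
\Phi(\bH_T)>\left[e^{-(r-1)}\log n\right]^{n/r}e^{-o(n)},
\]
which is \eqref{TY'bd}.

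The main obstacle is not in this reduction --- whose substance is just the concentration of $T$, the identification of the conditioned law, and a routine transfer of ``w.h.p.'' --- but in the conditional Theorem~\ref{ThmX'} it invokes, which is harder than Theorem~\ref{ThmX'} on two counts: it must operate at $M=(1+o(1))(n/r)\log n$, with no fixed multiplicative slack $\eps$, and it must absorb the conditioning on covering $V$, which pins down the behaviour of a small but nonempty set of vertices and, as the introduction warns, destabilizes even the routine regularity inputs of the unconditioned proof. That is the business of \cite{HT}; the present paper instead isolates the $(1+\eps)$ case, where the proof of Theorem~\ref{ThmX'} can be presented with its main ideas least encumbered.
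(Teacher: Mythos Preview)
Your reduction differs substantively from the paper's, and the conditional theorem you defer to \cite{HT} is not the one the paper reduces to.  You propose to condition $\bH_{n,t}$ directly on covering $V$ (with the critical-edge reweighting that you correctly identify), and to run the machinery of this paper in that conditioned space.  But the paper's Theorem~\ref{ThmZ} instead conditions on \emph{all} degrees being at least $\gd_x\sim\eps\log n$ --- a far stronger floor than your $\geq 1$ --- and the reduction in Section~\ref{Reduction} earns this by a device you omit entirely.  At a time $\gs$ just below the hitting-time window it identifies the small set $W_\gs$ of vertices with $\gs$-degree below $\gd_0=\lfloor\eps\log n\rfloor$; for each $x\in W_\gs$ it \emph{commits} to the first $\gL$-edge $A_x$ containing $x$ (which w.h.p.\ are disjoint, by Lemma~\ref{stoppingLemma}); it then deletes $W_\gs\cup\bigcup_x A_x$ from $V$ and works with $\bH^*=\bG_\gs[W]$ on the remaining vertex set $W$.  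The point is that on $W$ every vertex now has $\gs$-degree at least $\gd_0-1$, so (see \eqref{reduction1}) $\bH^*$ is distributed exactly as $\bH_{W,\gs}$ conditioned on $\{d(x)\geq\gd_x~\forall x\}$, which is the hypothesis of Theorem~\ref{ThmZ}.

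The reason this matters is that the machinery of Sections~\ref{Ent}--\ref{PLF1} leans on the regularity property $\mR$, in particular $\gd_\h=\gO(D_\h)$.  Your conditioning on mere covering leaves degree-$1$ vertices in play, and these break $\mR$ (and with it the entropy and ``$\mE\Rightarrow\mF$'' arguments); your heuristic that the conditioning ``inflates by at most a polynomial factor'' is true but does not address this, since the unconditioned machinery already fails at $M=(1+o(1))(n/r)\log n$ precisely because of low-degree vertices.  So while your identification of the law of $\bH_T$ is correct and your outline is reasonable in spirit, the conditional statement you invoke is not what \cite{HT} proves, and the key step that trades ``covering'' for ``minimum degree $\gO(\log n)$'' --- the excision of $W_\gs$ together with committed edges $A_x$ --- is the missing idea in your reduction.
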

\nin
The right-hand sides 
are (of course) roughly the expectations of the
left-hand sides; more precisely,
they are within subexponential factors of those expectations.

In Section~\ref{Skeleton} we derive Theorem~\ref{ThmX} from
several statements whose proofs will be the main work of this paper.
Outlining that work will be easier once we have
the framework of Section~\ref{Skeleton}, so is postponed until then.

Section~\ref{Reduction} gives the reduction that is the
first step in the proof of Theorem~\ref{ThmY}.
(To be precise, we reduce Theorem~\ref{ThmY'} to Theorem~\ref{ThmZ},
a conditional variant of Theorem~\ref{ThmX'}.
The same reduction gets Theorem~\ref{ThmY} from the weaker version of
Theorem~\ref{ThmZ} corresponding to Theorem~\ref{ThmX}, but, again,
we don't know how to prove the weaker version without proving the stronger.)

\nin
\emph{Graph factors}

Recall that, for graphs $H$ and $G$, an \emph{$H$-factor} of $G$ is a
collection of copies of $H$ in $G$ (subgraphs of $G$ isomorphic to $H$)
whose vertex sets partition $V(G)$.
The graph-factor counterpart of Shamir's Problem asks (roughly),
\emph{when is the random graph $G_{n,M}$ likely to contain an $H$-factor?}
This was originally suggested (for $H=K_3$) by
Ruci\'nski \cite{Rucinski2}.

Here the naive guess---that vertices not in copies of $H$ are the main
obstruction---is not always correct, though one does expect it to
be correct for \emph{strictly balanced} $H$ (see \cite{JLR})
and \emph{slightly} beyond.
For strictly balanced $H$ it is shown in \cite{JKV} at the level
of Erd\H{o}s-R\'enyi thresholds (so the analogue of Theorem~\ref{ThmX};
this says, for example, that $n^{4/3}\log^{1/3}n$ is a threshold for
existence of a triangle-factor).
See Conjecture~1.1 of \cite{JKV} for what ought to
be true in general.  
Though given in detail only for graphs, the arguments and results of
\cite{JKV} extend essentially unmodified to \emph{r-graph}-factors,
where Theorem~\ref{ThmX} is just the case that $H$ consists of a single edge.

I expect---admittedly, without having thought very seriously---that
the present results
extend to the general
graph (and hypergraph) factor setting, with,
as was true in \cite{JKV}, some technical complications but
all key ideas already appearing in the arguments for Shamir.
Beautiful recent coupling arguments of O.\ Riordan \cite{Riordan}
and A.\ Heckel \cite{Heckel} show
that in some cases---e.g.\ cliques---the graph factor versions of
Theorems~\ref{jkv} and \ref{ThmX} \emph{follow from} the Shamir versions
(e.g.\ at these levels of accuracy, Ruci\'nski's triangle-factor
question is \emph{contained in} Shamir);
but there seems
little chance of anything analogous for Theorem~\ref{ThmY}.

\nin
\textbf{Usage}

Throughout the paper we fix $r\geq 3$; take $V=[n]:=\{1\dots n\}$,
with $r|n$;
and use $\K$ for $\C{V}{r}$.
We use $v,w,x,y,z$ for vertices and
$\f,\g, \h$ for $r$-graphs (a.k.a.\ subsets of $\K$),
or bold versions of these when the $r$-graphs in question are random.
As above, we
abbreviate $\bH^r_{n,M}=\bH_{n,M}$,
and the number of perfect matchings (or p.m.s) of $\h$ is denoted $\Phi(\h)$.

We use
$\h_x =\{A\in \h:x\in A\}$;
$\gD_\h$, $\gd_\h$ and $D_\h$ for maximum, minimum and average degrees in $\h$;
and $\h-X=\{A\in \h: A\sub V\sm X\}$.
As usual the \emph{codegree} (in $\h$) of $x,y$ is $|\{A\in \h: x,y\in A\}|$.
We will often abusively write $Y\cup x$ and $Y\sm x$ for $Y\cup \{x\}$ and $Y\sm \{x\}$.

Asymptotic notation is interpreted as $n \ra \infty$
(with dependence on $n$ typically suppressed).
We use $a\ll b$ and $a=o(b)$ interchangeably and, similarly,
$a\less b$ is the same as $a<(1+o(1))b$.
We use both ``a.e." and ``a.a."
to mean ``for all but a $o(1)$-fraction."
A familiar point that nonetheless seems worth mentioning:
given $\eps$, implied quantities in asymptotic expressions
not mentioning $\eps$ (constants in $O(\cdot)$ and $\gO(\cdot)$, rates in $o(\cdot)$ and
$\go(\cdot)$) depend on $\eps$, but, for example, the implied constant in $O(\eps)$
does not.

We use $\log$ for natural logarithm and
$a\pm b$ for a quantity within $b$ of $a$.
We will always assume $n$ is large enough to
support our assertions and, following a common abuse,
usually pretend large numbers are integers.

We will sometimes use bold for random objects: 
consistently for 
$r$-graphs, but otherwise 
only if it seems needed to distinguish a random object from its possible values.
We use mathfrak characters ($\mA,\mB,\mC,\mD,\mE, \ldots$)
for properties (saying, e.g., ``$\h$ has property $\mA$,"
``$\h$ satisfies $\mA$," ``$\h\models\mA$" as convenient) and events
(e.g.\ $\mA_t=\{\bH_t\models \mA\}$; see Section~\ref{BandR}),
and will usually prefer $\mA\mR$ to $\mA\wedge\mR$.

\section{Skeleton}\label{Skeleton}

Here we prove Theorem~\ref{ThmX'} modulo
a few assertions whose justification will be the main content of the paper.
As mentioned earlier, the approach is similar to that of \cite{JKV};
a major, if nearly invisible, difference is the $o(n)$ in \eqref{MainIneq}, which
was formerly $O(n)$;  see ``Orientation" below for a \emph{little} more on this.


Fix $\eps$, let $M$ be as in the statement
of Theorem~\ref{ThmX'} (or \ref{ThmX}),
and set $\TTT=\C{n}{r}-M$.
Let $A_1\dots A_{\C{n}{r}}$ be a uniform ordering of $\K$
($=\C{V}{r}$)
and set $\h_t = \K\sm\{A_1\dots A_t\}$; so $\h_0 =\K$
and we may take $\h_{n,M}=\h_T$.

(Note the $T$ and $\h_t$ here disagree with---are nearly the opposite of---those
in Theorems~\ref{ThmY} and \ref{ThmY'}, reflecting the different $\h_0$'s
($\K$ vs.\ $\0$), but we will not see those theorems again until
Section~\ref{Reduction}, when we are done with the present setup;
nor will there ever be any danger of confusing $\h_t$ and $\h_x$
($=\{A\in \h:x\in A\}$; see Usage).)

\mn

Set $\Phi(\bH_t) = \Phi_t$ and let $\xi_t$
be the fraction of perfect matchings of $\bH_{t-1}$
that contain $A_t$ (so $\xi_t =\Phi(\bH_{t-1}-A_t)/\Phi_{t-1}$).
Then
\[
\Phi_t = \Phi_0(1-\xi_1) \cdots (1-\xi_t);
\]
equivalently,
\beq{mgt1}
\mbox{$\log \Phi_t = \log \Phi_0 +\sum_{i=1}^t\log (1-\xi_i).$}
\enq

It will be helpful to set
\beq{gL}
\gL = (r-1)n/r;
\enq
this quantity
represents one of the crucial differences
between the present work and \cite{JKV}
(see ``Orientation" following Lemma~\ref{Shearer}).

Notice that (by Stirling's Formula)
\beq{mg0r}
\log\Phi_0= \log \frac{n!}{(n/r)! (r!)^{n/r}}
=\frac{n}{r}\log \Cc{n}{r-1}-\gL +O(\log n)
\enq
(recall $\log$ is $\ln$), and that
\begin{equation}\label{gci}
\E \xi_i = \frac{n/r}{\C{n}{r}-i+1} =:\gc_i,
\end{equation}
since in fact
\begin{equation}\label{infact}
\E [\xi_i|A_1\dots A_{i-1}] =\gc_i
\end{equation}
for \emph{any} choice of $A_1\dots A_{i-1}$.
(Strictly speaking \eqref{infact} requires $\Phi(\h_{i-1})\neq 0$, but this will
be true in any case we consider.)
Thus
\begin{equation}\label{Em}
\sum_{i=1}^t\E \xi_i = \sum_{i=1}^t \gc_i=
\frac{n}{r} \log \frac{\C{n}{r}}{\C{n}{r}-t} +o(1),
\end{equation}
provided $\C{n}{r}-t=\omega(n)$.


Let $\mA_t $ be the event
\beq{At}
\mbox{$\left \{\log \Phi_t >\log\Phi_0 - \sum_{i=1}^t\gc_i -o(n)\right\}$.}
\enq

\nin
\emph{Remark.}
We note, perhaps unnecessarily, that \eqref{At} refers to some specific $o(n)$,
so that it makes sense to talk about $\mA_t$ for a particular $n$ (as opposed to a sequence).
Related points will be common below, and, somewhat departing from common
practice, we will elaborate in a couple places where this
seems possibly helpful; see following \eqref{Bi*} for a first instance.

Combining \eqref{mg0r} and the expression for $\sum\gc_i$ in
\eqref{Em} (with $t=T$, in which case the argument of the log is $\C{n}{r}/M$),
we find that $\mA_T$ says
\beq{logPhi}
\log \Phi_T > (n/r)\log (rM/n)-\gL  -o(n),
\enq
which is the same as \eqref{MainIneq};
so Theorem~\ref{ThmX'} is
\beq{Main1}
\pr(\ov{\mA}_T) =  o(1).
\enq
(We will in fact show
$\pr(\cup_{t\leq T}\ov{\mA}_t) =  o(1)$; see \eqref{3terms}.)


For \eqref{Main1}
we use the method of martingales with bounded differences.
Here it is  natural---though we will need a slight variant---to
consider the martingale
\[
\mbox{$\{X_t =\sum_{i=1}^t (\xi_i -\gc_i)\}$}
\]
(it is a martingale by \eqref{infact}), with associated
difference sequence
\[
\{Z_i = \xi_i -\gc_i\}.
\]

In general proving concentration of such $X_t$'s
depends on maintaining some
control over the $|Z_i|$'s,
to which end we keep track of
two sequences of auxiliary
events, $\mB_i$ and $\mR_i$ ($i \le \TTT-1$).
These will be defined in
Section~\ref{BandR}.
Roughly, $\mB_i$ says that no edge of $\h_i$ is in too much more than its
proper share of perfect matchings, while $\mR_i$ consists of
standardish degree restrictions.

For $i\leq T $
it will follow trivially from $\mB_{i-1}$
(see \eqref{proofofxibd}) that
\beq{xibd}
\xi_i =O(\gc_i).
\enq
This is more than enough
for the desired concentration, but can occasionally fail, since $\mB_{i-1}$ may fail.
We accordingly slightly modify the above
$X$'s and $Z$'s, setting
\begin{equation}\label{Zi}
Z_i =\left\{\begin{array}{ll}
\xi_i -\gc_i & \mbox{if $\mB_j$ holds for all $j<i$,}\\
0&\mbox{otherwise}
\end{array}\right.
\end{equation}
(and
$X_t = \sum_{i=1}^tZ_i$).
As shown in Section~\ref{Mart},
a martingale analysis along the lines of
Azuma's Inequality then gives
\begin{equation}\label{conc}
\pr( |X_t| > \gl  ) < n^{-\omega(1)} ~~~\mbox{for $\gl\gg \sqrt{n}$.}
\end{equation}

Notice that if we do have $\mB_i$ for all $i<t< T$
(so $X_t=\sum_{i=1}^t(\xi_i-\gc_i)$)
and
$X_t <\sqrt{n}\log n$
(say; there is plenty of room here),
then we have $\mA_t$;
for
\eqref{xibd} gives
\begin{eqnarray}\label{gc2calc}
\mbox{$\sum_{i=1}^t\xi_i^2$} &=&\mbox{$O(\sum_{i=1}^t \gc_i^2)$} \nonumber\\
&=&
\mbox{$O((n/r)^2\sum\{j^{-2}:j> (n/r)\log n\}) = O(n/\log n);$}
\end{eqnarray}
so (using \eqref{mgt1})
\[
\log \Phi_t
> \log \Phi_0 -\sum_{i=1}^t(\xi_i +\xi_i^2 ) \\
> \log\Phi_0 -\sum_{i=1}^t\gc_i  - O(n/\log n),
\]
where the first inequality uses $\xi_i=o(1)$
(which follows from \eqref{xibd} and \eqref{gci}), and
the $O(n/\log n)$ absorbs the smaller
$\sum_{i=1}^t(\xi_i-\gc_i)=X_t$.

Thus the first failure, if any, of an $\mA_t$ (with $t\leq \TTT$)
must occur either because $X_t$ is too large
or because $\mB_i$ fails for some $i<t$;
formally, we have
\beq{3terms}
\pr(\cup_{t\leq T}\ov{\mA}_t) < \pr(\cup_{t<T}\ov{\mR}_t)  +
\sum_{t< T}\pr(\mA_t\mR_t\ov{\mB}_t)
+
\sum_{t\leq T}\pr((\cap_{i<t}\mB_j)\cap \ov{\mA}_t).
\enq

\nin
Here the last sum is $n^{-\omega(1)}$
by \eqref{conc} and the discussion following it,
and we will show
\beq{Ri}
\pr(\cup_{t< T}\ov{\mR}_t)=o(1)
\enq
and, for $i\leq \TTT$,
\beq{Bi}
\pr(\mA_i\mR_i\ov{\mB}_i) = n^{-\go(1)}.
\enq
So the l.h.s.\ of \eqref{3terms} is $o(1)$, which in particular gives \eqref{Main1}
(and Theorem~\ref{ThmX'}).\qed

\mn
\emph{Remark.}
Thus most of the exceptional probability
comes from the $\mR_i$'s, which include lower bounds on minimum degrees
(see \eqref{Rg2}) whose failure probability is not all that small.
If the process survives the $\mR_i$'s, then the probability that it fails for
some other reason is much smaller.

\mn
\emph{Orientation.}
What this paper is really about---as was \cite{JKV}---is bounding the
increments $\xi_i$; that is, establishing \eqref{xibd}, which, as already
mentioned, follows immediately from $\mB_{i-1}$.
The martingale analysis that handles (\emph{via} \eqref{conc}) the last term in
\eqref{3terms} is then pretty standard, and
the genericity assertions \eqref{Ri} are also fairly routine.

Thus the heart of the matter is \eqref{Bi},
which is proved in Sections~\ref{More}-\ref{PLF1}, with the assistance of the
entropy machinery
developed in Section~\ref{Ent}.  The most important part of this is
Sections~\ref{PLE} and \ref{PLF1},
but the Br\'egman-like bound of Theorem~\ref{TCuckler},
which underlies Section~\ref{PLE} and
seems of independent interest, is also critical:
as mentioned at the beginning of this section, a crucial
difference between the present outline and
the corresponding discussion in \cite{JKV} is the $o(n)$---which in \cite{JKV} was
$O(n)$---in the definition of $\mA_t$, and it is Theorem~\ref{TCuckler} that
provides the opening to exploiting this.

We will try to comment on particular aspects of the argument when we are in
a position to do so more intelligibly.

\mn
\emph{Outline.}
After briefly recalling large deviation basics,
Section \ref{Mart}
records what we need in the way of martingale concentration, in
a form convenient for a second application in Section~\ref{PLF1}, and
gives the calculation for \eqref{conc}.
As mentioned above, Section~\ref{Ent} treats entropy, with main point the
aforementioned Theorem~\ref{TCuckler}.
In Section \ref{BandR} we finally define the
events $\mB_i$ and $\mR_i$ as part of a somewhat more general discussion, give the easy
derivation of \eqref{xibd} from $\mB_{i-1}$, and, in \eqref{Bi*}, slightly reformulate \eqref{Bi}.
The uninteresting proof of \eqref{Ri}
is banished to an appendix that the reader is encouraged to skip.
And, again,
Sections~\ref{More}-\ref{PLF1} prove \eqref{Bi*}, thus establishing \eqref{Bi}
and, according to the above discussion, completing the proof of Theorem~\ref{ThmX'}.

\section{Concentration}\label{Mart}

Before turning to the main business of this section we
review a couple standard
``Chernoff-type"
bounds.
Recall that a r.v.\ $\xi$ is \emph{hypergeometric} if, for some
$s,a$ and $k$,
it is distributed as $|X\cap A|$, where $A$ is a fixed $a$-subset
of the $s$-set $S$ and $X$ is uniform from $\C{S}{k}$.
\begin{thm}
\label{T2.1}
If $\xi $ is binomial or hypergeometric with  $\mathbb{E} \xi  = \mu $, then for $t \geq 0$,
\begin{align}
\Pr(\xi  \geq \mu + t) &\leq
\exp\left[-\mu\varphi(t/\mu)\right] \leq
\exp\left[-t^2/(2(\mu+t/3))\right], \label{eq:ChernoffUpper}\\
\Pr(\xi  \leq \mu - t) &\leq
\exp[-\mu\varphi(-t/\mu)] \leq
\exp[-t^2/(2\mu)],\label{eq:ChernoffLower}
\end{align}
where $\varphi(x) = (1+x)\log(1+x)-x$
for $ x > -1$ and $ \varphi(-1)=1$.
\end{thm}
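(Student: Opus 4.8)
The plan is to treat the binomial case by the exponential-moment (Chernoff) method and then transfer everything to the hypergeometric case. Write $\xi=\sum_{i=1}^{n}X_i$ with the $X_i$ i.i.d.\ Bernoulli$(p)$ and $\mu=np$. For $\lambda\in\mathbb R$,
\[
\E e^{\lambda\xi}=(1-p+pe^{\lambda})^{n}\le \exp\!\big(np(e^{\lambda}-1)\big)=\exp\!\big(\mu(e^{\lambda}-1)\big),
\]
using $1+x\le e^{x}$. Markov's inequality applied to $e^{\lambda\xi}$ gives, for $\lambda>0$,
\[
\Pr(\xi\ge\mu+t)\le \exp\!\big(\mu(e^{\lambda}-1)-\lambda(\mu+t)\big),
\]
and the analogous bound with $\lambda<0$ for $\Pr(\xi\le\mu-t)$. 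Choosing $e^{\lambda}=1+t/\mu$ in the first (and $e^{\lambda}=1-t/\mu$ in the second, legitimate when $t<\mu$; for $t\ge\mu$ the lower-tail event is $\{\xi=0\}$, and $\Pr(\xi=0)=(1-p)^{n}\le e^{-\mu}$ matches the stated bound via $\varphi(-1)=1$) collapses the exponent to exactly $-\mu\varphi(\pm t/\mu)$, which is the first inequality in each line.

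For the hypergeometric case I would invoke Hoeffding's comparison of sampling without and with replacement: if $\xi$ is hypergeometric and $\eta$ is binomial with $\E\eta=\E\xi=\mu$, then $\E f(\xi)\le\E f(\eta)$ for every continuous convex $f$. Taking $f(x)=e^{\lambda x}$ shows the hypergeometric moment generating function is dominated by the binomial one with the same mean, so the displayed Chernoff computation — and hence both tail bounds — carries over verbatim. (If one prefers a self-contained argument, the without-replacement exponential moment can be bounded directly, peeling off one sampled coordinate at a time and applying $1+x\le e^{x}$ at each step.)

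It remains to extract the Bernstein-type (second) inequality in each line from elementary properties of $\varphi$. From $\varphi'(x)=\log(1+x)$, $\varphi''(x)=1/(1+x)$, $\varphi(0)=\varphi'(0)=0$, and the expansion $\varphi(x)=\sum_{k\ge 2}\tfrac{(-1)^{k}}{k(k-1)}x^{k}$ (so $\varphi(-x)=\sum_{k\ge 2}\tfrac{x^{k}}{k(k-1)}$ for $0\le x<1$), one gets the standard pointwise estimates $\varphi(x)\ge \tfrac{x^{2}}{2(1+x/3)}$ for $x\ge 0$ and $\varphi(-x)\ge x^{2}/2$ for $0\le x\le 1$. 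Substituting $x=t/\mu$ turns $\mu\varphi(t/\mu)$ into $t^{2}/(2(\mu+t/3))$ and $\mu\varphi(-t/\mu)$ into $\ge t^{2}/(2\mu)$ (for $t\le\mu$; for $t>\mu$ the lower-tail event has probability $0$), which is what is claimed.

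This is the classical Chernoff–Hoeffding–Bernstein estimate, so I do not anticipate a genuine obstacle; the one point that wants a little care is the hypergeometric reduction, where the convex-domination comparison must be applied to precisely $f(x)=e^{\lambda x}$ with the equal-mean hypothesis verified. Everything else — the optimization of the Chernoff exponent and the one-variable estimates on $\varphi$ — is routine.
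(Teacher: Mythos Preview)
Your argument is the standard, correct derivation of these Chernoff--Hoeffding bounds. The paper, however, does not prove Theorem~\ref{T2.1} at all: it simply cites \cite[Theorems 2.1 and 2.10]{JLR} and moves on. So there is nothing to compare against; your sketch supplies a proof where the paper just quotes one.
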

\nin
(See e.g.\ \cite[Theorems 2.1 and 2.10]{JLR}.)
For larger deviations the following consequence of the finer bound in \eqref{eq:ChernoffUpper}
is helpful.
\begin{thm}
\label{Cher'}
For $\xi $ and $\mu$ as in Theorem~\ref{T2.1} and any $K$,
\begin{eqnarray*}
\Pr(\xi  > K\mu) < \exp[-K\mu \log (K/e)].
\end{eqnarray*}
\end{thm}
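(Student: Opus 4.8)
\textbf{Proof plan for Theorem~\ref{Cher'}.}
The plan is to start from the sharper bound $\Pr(\xi > \mu + t) \le \exp[-\mu\vp(t/\mu)]$ in \eqref{eq:ChernoffUpper} and simply specialize to $t = (K-1)\mu$, so that $t/\mu = K-1$ and $\mu + t = K\mu$. This converts the target into the claim that $\mu\,\vp(K-1) \ge K\mu\log(K/e)$, i.e. that $\vp(K-1) \ge K\log K - K$ for the relevant range of $K$. Unwinding the definition $\vp(x) = (1+x)\log(1+x) - x$ at $x = K-1 > -1$ gives $\vp(K-1) = K\log K - (K-1) = K\log K - K + 1$, which is in fact slightly \emph{larger} than $K\log K - K = K\log(K/e)$. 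So the inequality holds with a little room to spare (the surplus being the harmless additive $1$), and one has to be mildly careful only about the trivial regimes: if $K \le 1$ the statement $\Pr(\xi > K\mu) < \exp[-K\mu\log(K/e)]$ either has a nonpositive or small-positive right-hand side and is vacuous or follows from $\Pr \le 1$; and if $\mu = 0$ there is nothing to prove. For $K > 1$ (the case of interest) the computation above is the whole argument.

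First I would dispatch the degenerate cases ($\mu = 0$, or $K \le 1$ where $\exp[-K\mu\log(K/e)] \ge 1$ and the bound is trivial, noting $\log(K/e) \le 0$ there). Then, assuming $\mu > 0$ and $K > 1$, I would invoke the first inequality of \eqref{eq:ChernoffUpper} with the substitution $t = (K-1)\mu \ge 0$, yielding
\[
\Pr(\xi > K\mu) = \Pr(\xi \ge \mu + t) \le \exp[-\mu\,\vp(K-1)].
\]
Finally I would compute $\vp(K-1) = K\log K - (K-1)$ directly from the definition and observe $K\log K - (K-1) > K\log K - K = K\log(K/e)$, so $-\mu\,\vp(K-1) < -K\mu\log(K/e)$, which gives the strict inequality claimed. (A cosmetic point: the theorem as stated wants strict ``$<$'', and the extra $+1$ in the exponent delivers this even though the underlying Chernoff bound is weak; alternatively one notes the Chernoff bound itself is strict unless $\Pr(\xi > K\mu) = 0$.)

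There is essentially no obstacle here: this is a one-line specialization of an already-cited large-deviation estimate, and the ``hard part'' is merely remembering to check that $K$ can be taken $> 1$ without loss (for $K \le 1$ the asserted bound is weaker than the trivial bound $\Pr \le 1$, since then $K\mu\log(K/e) \le 0$). The only thing worth stating cleanly is the identity $\vp(K-1) = K\log K - K + 1$, since that is what makes the bound come out in the advertised form $\exp[-K\mu\log(K/e)]$ rather than in terms of $\vp$.
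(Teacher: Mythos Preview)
Your proposal is correct and is exactly the derivation the paper intends: the paper does not give a proof but states that Theorem~\ref{Cher'} is ``the following consequence of the finer bound in \eqref{eq:ChernoffUpper},'' i.e.\ of $\exp[-\mu\varphi(t/\mu)]$, and your substitution $t=(K-1)\mu$ together with the identity $\varphi(K-1)=K\log K-K+1$ is precisely that consequence.
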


\mn

We now turn to martingales and \eqref{conc}.
The argument for the latter
is about the same as that for the corresponding assertion
in \cite{JKV}, but we now present the basic machinery in somewhat greater generality
to support a second application in Section~\ref{PLF1}.
There is nothing much new here, but, lacking a convenient reference, we include
some details.

\begin{lemma}\label{Azish}
If $Z_1\dots Z_t$ is a martingale difference sequence with respect to the
random sequence $Y_1\dots Y_t$
(that is, $Z_i$ is a function of $Y_1\dots Y_i$ and $\E[Z_i|Y_1\dots Y_{i-1}]=0$), then for
$Z=\sum Z_i$ and any $\vt>0$,
\beq{EeeZ}
\mbox{$\E e^{\vt Z} \leq \prod_{i=1}^t \max \E[e^{\vt Z_i}|y_1\dots y_{i-1}]$}
\enq
and, consequently,
for any $\gl>0$,
\beq{Ebound}
\mbox{$\pr(Z>\gl) <e^{-\vt \gl} \prod_{i=1}^t \max \E[e^{\vt Z_i}|y_1\dots y_{i-1}]$}
\enq
(where $y_i$ ranges over possibilities for $Y_i$).
\end{lemma}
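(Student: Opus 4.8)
The plan is to prove Lemma~\ref{Azish} by the standard exponential-moment (Azuma--Hoeffding) route, adapted to allow the conditional bounds $\max_{y_1\dots y_{i-1}}\E[e^{\vt Z_i}\mid y_1\dots y_{i-1}]$ rather than a uniform bound on the increments; the second assertion \eqref{Ebound} is then immediate from \eqref{EeeZ} by Markov's inequality applied to $e^{\vt Z}$, since $\pr(Z>\gl)=\pr(e^{\vt Z}>e^{\vt\gl})\le e^{-\vt\gl}\,\E e^{\vt Z}$. So the real content is \eqref{EeeZ}.

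\medskip\noindent
\emph{Proof of \eqref{EeeZ}.}
Write $\mathcal{F}_i$ for the $\sigma$-algebra generated by $Y_1\dots Y_i$ (with $\mathcal{F}_0$ trivial), and for $0\le i\le t$ set
\[
a_i \;=\; \max_{y_1\dots y_i}\;\E\!\left[e^{\vt Z_{i}}\,\middle|\,Y_1=y_1\dots Y_{i-1}=y_{i-1}\right]\quad(i\ge 1),
\]
so that the right-hand side of \eqref{EeeZ} is $\prod_{i=1}^t a_i$. Because $Z_1\dots Z_i$ is $\mathcal{F}_i$-measurable, the partial sum $S_i:=\sum_{j\le i}Z_j$ is $\mathcal{F}_i$-measurable, and
\[
\E\!\left[e^{\vt S_i}\,\middle|\,\mathcal{F}_{i-1}\right]
\;=\; e^{\vt S_{i-1}}\,\E\!\left[e^{\vt Z_i}\,\middle|\,\mathcal{F}_{i-1}\right]
\;\le\; a_i\, e^{\vt S_{i-1}},
\]
where we pulled out the $\mathcal{F}_{i-1}$-measurable factor $e^{\vt S_{i-1}}$ and then bounded the remaining conditional expectation by its maximal value $a_i$ (this is where we use that $Z_i$ is a function of $Y_1\dots Y_i$, so its conditional law given $\mathcal{F}_{i-1}$ really is one of the measures being maxed over). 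Taking expectations and iterating from $i=t$ down to $i=1$ (a trivial induction, using $S_0=0$) gives $\E e^{\vt S_t}\le\prod_{i=1}^t a_i$, which is \eqref{EeeZ}. Note the martingale condition $\E[Z_i\mid\mathcal{F}_{i-1}]=0$ is not actually needed for this inequality in the stated form (it only guarantees $a_i\ge 1$ by Jensen, so the bound is not vacuous, and it is of course what makes the eventual application effective); I would keep the hypothesis as stated since it matches the intended use.

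\medskip\noindent
There is essentially no obstacle here — the lemma is a textbook fact and the only mild care needed is bookkeeping: the ``$\max$'' is over conditioning values $y_1\dots y_{i-1}$, and one must be sure the conditional distribution of $Z_i$ given $\mathcal{F}_{i-1}$ coincides (a.s.) with conditioning on the corresponding sequence of values, which holds because the $Z_i$ are genuine functions of the $Y_j$'s. I would present it in two or three lines exactly as above. The substance of the section is not this lemma but its deployment: choosing $\vt=\gl/(cn)$ for a suitable constant $c$, bounding each $a_i$ using $|Z_i|=O(\gc_i)$ (valid on the event that all $\mathcal B_j$, $j<i$, hold, which is exactly why the truncated $Z_i$ of \eqref{Zi} was introduced) together with $\E[Z_i\mid\mathcal F_{i-1}]=0$ to get $\E[e^{\vt Z_i}\mid\cdot]\le \exp(O(\vt^2\gc_i^2))$ for $\vt$ small, and then summing $\sum_i \gc_i^2 = O(n/\log n)$ as in \eqref{gc2calc} to obtain \eqref{conc}; that calculation is the one worth writing out carefully, but it comes after the present statement.
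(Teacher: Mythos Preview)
Your proof is correct and follows essentially the same route as the paper: derive \eqref{Ebound} from \eqref{EeeZ} via Markov, and prove \eqref{EeeZ} by pulling out the $\mathcal F_{i-1}$-measurable factor $e^{\vt S_{i-1}}$, bounding the remaining conditional expectation by its maximum, and inducting. The paper's version is just the same tower-property computation written without the $\sigma$-algebra notation; your side remark that the martingale hypothesis is not actually used in \eqref{EeeZ} itself is accurate.
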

\nin

\begin{proof}
As usual, \eqref{Ebound} follows from \eqref{EeeZ},
using
$\pr(Z> \gl) = \pr(e^{\vt Z}>e^{\vt \gl})$ and Markov's Inequality.
For \eqref{EeeZ}, with $B_i$ denoting the $i$th factor on the r.h.s., induction on $t$ gives
\begin{eqnarray*}
\E e^{\vt Z} &=  &
\E\{\E[e^{\vt Z} |Y_1 \dots Y_{t-1}]\}\\
&= &
\E\{e^{\vt (Z_1+\cdots + Z_{t-1})}\E[e^{\vt Z_t}|Y_1 \dots Y_{t-1}]\}\\
&\leq &
B_t\cdot \E[e^{\vt (Z_1+\cdots + Z_{t-1})}]\\
&\leq &
\mbox{$\prod B_i.$}
\end{eqnarray*}
\end{proof}

Both here and in Section~\ref{PLF1}, bounds on the factors in \eqref{EeeZ} are given
by the next observation.
\begin{prop}\label{EeZ}
For a r.v.\ $W \in [0,b]$ with $\E W \leq a$, and $\vt\in [0, (2b)^{-1}]$,
\beq{Ethetas}
\max\{\E e^{\vt (W -\E W )},\E e^{-\vt (W -\E W )}\}
\leq e^{\vt^2 ab}.
\enq
\end{prop}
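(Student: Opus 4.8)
The plan is to prove Proposition~\ref{EeZ} by the standard ``bounded r.v.\ has subgaussian moment generating function'' argument, exploiting the hypotheses $W\in[0,b]$, $\E W\le a$, and $\vt\le (2b)^{-1}$ to control all the error terms. First I would reduce to the centered variable $U=W-\E W$, which satisfies $U\in[-\E W, b-\E W]\subseteq[-b,b]$, so $|\vt U|\le \vt b\le 1/2$. The key elementary fact is that for real $x$ with $|x|\le 1/2$ one has $e^x\le 1+x+x^2$ (indeed $e^x-1-x=\sum_{k\ge 2}x^k/k!\le x^2\sum_{k\ge 0}|x|^k/2 \le x^2$ when $|x|\le 1/2$, using $k!\ge 2\cdot 2^{k-2}$; one can be slightly more careful but this crude bound suffices). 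Applying this with $x=\pm\vt U$ gives
\[
\E e^{\pm\vt U}\le 1+\E(\pm\vt U)+\vt^2\E U^2 = 1+\vt^2\E U^2,
\]
since $\E U=0$.

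Next I would bound the variance term: $\E U^2=\E W^2-(\E W)^2\le \E W^2\le b\,\E W\le ab$, where the middle inequality uses $W\le b$ (so $W^2\le bW$) and the last uses $\E W\le a$. Combining, $\E e^{\pm\vt U}\le 1+\vt^2 ab\le e^{\vt^2 ab}$, which is exactly \eqref{Ethetas}. That completes the proof; both the upper and lower versions come out symmetrically because only $\E U=0$ and $\E U^2\le ab$ were used, not the sign of anything.

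I do not expect any real obstacle here: the whole content is the inequality $e^x\le 1+x+x^2$ on $[-1/2,1/2]$ and the two-line variance estimate $\E W^2\le b\E W\le ab$. The only thing to be mildly careful about is making sure the constant in ``$\vt\le (2b)^{-1}$'' is genuinely what makes $e^x\le 1+x+x^2$ valid (it is, with room to spare), and that one does not accidentally need $\E W=a$ rather than $\E W\le a$ — one does not, since $ab$ only appears as an upper bound. So the proof is short and self-contained, needing nothing from earlier in the paper.

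\begin{proof}
Write $U=W-\E W$, so $\E U=0$ and $U\in[-\E W,\,b-\E W]\subseteq[-b,b]$; hence $|\vt U|\le \vt b\le 1/2$. For $|x|\le 1/2$ we have $e^x\le 1+x+x^2$ (as $e^x-1-x=\sum_{k\ge2}x^k/k!$ and $|x|^k/k!\le x^2\cdot|x|^{k-2}/2^{k-2}\le x^2\cdot 2^{-(k-2)}$, so the sum is at most $x^2\sum_{j\ge0}2^{-j}\cdot\tfrac12\le x^2$). Applying this with $x=\vt U$ and with $x=-\vt U$ and taking expectations,
\[
\E e^{\pm\vt U}\le 1\pm\vt\,\E U+\vt^2\,\E U^2=1+\vt^2\,\E U^2.
\]
Since $0\le W\le b$ gives $W^2\le bW$, we get $\E U^2\le\E W^2\le b\,\E W\le ab$. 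Therefore $\E e^{\pm\vt U}\le 1+\vt^2ab\le e^{\vt^2ab}$, which is \eqref{Ethetas}.
\end{proof}
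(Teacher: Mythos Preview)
Your proof is correct, and it takes a genuinely different route from the paper's.  The paper first reduces to the case $\E W=a$ (the bound being monotone in $a$), then uses convexity of $e^{\pm\vt W}$ to argue that the extremal distribution is two-point ($W=b$ with probability $p=a/b$, else $W=0$), and finally verifies by direct calculation that $e^{-xp}(1-p+pe^x)\le e^{x^2 p}$ for $|x|\le 1/2$.  You instead bound the moment generating function directly via the Taylor-type estimate $e^x\le 1+x+x^2$ on $[-\tfrac12,\tfrac12]$ together with the variance bound $\E(W-\E W)^2\le \E W^2\le b\,\E W\le ab$.  Your approach is arguably cleaner here: it avoids the extremal-distribution reduction entirely and handles the hypothesis $\E W\le a$ (rather than $=a$) without a separate monotonicity step.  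The paper's route has the advantage of identifying the worst case explicitly, which can matter if one later wants sharper constants.  One small remark: the series bookkeeping in your justification of $e^x\le 1+x+x^2$ is slightly off (as written it yields $2x^2$, and the extra factor $\tfrac12$ is unaccounted for); the inequality itself is of course true and is most quickly seen by noting that $f(x)=1+x+x^2-e^x$ has $f(0)=f'(0)=0$ and $f''(x)=2-e^x>0$ on $[-\tfrac12,\tfrac12]$.  This does not affect the correctness of your argument.
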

\begin{proof}
Since the bound is increasing in $a$, it is enough to prove it when $\E W =a$.
Given this and the bounds on $W $, convexity implies that each of
$\E e^{\vt W }$, $\E e^{-\vt W }$ is maximized (for any $\vt$)
when $W $ is $b$ with probability $p:=a/b$ and zero otherwise, in which case we have
\[
\E e^{\vt (W -\E W )} =
e^{-\vt b p}[1- p+p e^{\vt b}], ~~~\E e^{-\vt (W -\E W )} =e^{\vt b p}[1- p+p e^{-\vt b}];
\]
and simple calculations show that
$e^{-xp}[1-p+pe^x]\leq e^{x^2p} $ for $|x|\leq 1/2$ (and any $p$),
implying \eqref{Ethetas}.
\end{proof}

\begin{proof}[Proof of \eqref{conc}]

Let $\vs_i=O(\gc_i)$ be the bound on $\xi_i$ in \eqref{xibd}.
We will apply
Lemma~\ref{Azish} with $Y_i=A_i$ and
$Z_i$ as in \eqref{Zi} (so $Z=X_t$),
using Proposition~\ref{EeZ} with $b=\vs_i$ and $a= \gc_i$
to bound the factors in \eqref{Ebound} (or \eqref{EeeZ}).
(For relevance of the proposition notice that, conditioned on any particular values
of $A_1\dots A_{i-1}$, $Z_i$ is
either identically zero (as happens if $\mB_j$ has failed for some $j<i$) or
$Z_i=\xi_i-\gc_i$, where $\xi_i\in [0,\vs_i]$ has (conditional) expectation
$\gc_i$
(see \eqref{infact}).)
This combination (i.e.\ of Lemma~\ref{Azish} and Proposition~\ref{EeZ})
gives
\[
\mbox{$\pr(X_t > \gl) < \exp[\vt ^2 \sum_{i=1}^t \vs_i \gamma_i -\vt \gl]$}
\]
for any $\gl>0$, provided, say, $\vt\leq 1 $ ($\leq (2\max \vs_i)^{-1}$).
So with
\[
\mbox{$J=\sum_{i=1}^t \vs_i\gamma_i= O(\sum\gc_i^2) = O(n/\log n)$}
\]
(see \eqref{gc2calc}) and $\vt=\min\{1,\gl/(2J)\}$, we have
\[
\Pr(X_t > \gl)<\left\{
\begin{array}{ll}
\exp[-\gl^2/(4J)]&\mbox{if $\gl\leq 2J$,}\\
\exp[-\gl/2]&\mbox{otherwise;}
\end{array}\right.
\]
and for $\gl\gg\sqrt{n}$ (as in \eqref{conc})
each bound is $n^{-\go(1)}$
(in the first case since $J=O(n/\log n)$).

The same argument applies to $\pr(X_t < -\gl)=\pr(-X_t >\gl)$ (though this part of \eqref{conc}
isn't needed for the proof of Theorem~\ref{ThmX'}).
\end{proof}

\section{Entropy}\label{Ent}

Here we develop what we need in the way of entropy.
The main result is Theorem~\ref{TCuckler}, an extension (essentially)
of Theorem~1.2(a) of \cite{Cuckler-K} (itself more or less a generalization of Br\'egman's Theorem
\cite{Bregman})
that is one main point underlying the present improvement of \cite{JKV}.  The discussion also
includes a pair of technical observations,
Lemmas~\ref{TL1} and \ref{TL2}, that support the use of Theorem~\ref{TCuckler} in
Section~\ref{PLC}.

We use $H(X)$ for the \emph{base} $e$
entropy of a discrete r.v.\ $X$;
that is,
\[
H(X) =-\sum_x p(x)\log p(x),
\]
where $p(x) = \pr(X=x)$.
For entropy basics see e.g.\ \cite{Cover-Thomas}.


For a hypergraph $\h$ and $v\in V=V(\h)$ ($=[n]$ as usual), we use
$X(v,\h)$ for the edge containing
$v$ in a uniformly chosen
perfect matching of $\h$, and
$h(v,\h)$ for $H(X(v,\h))$.
(We will not need to worry about $\h$'s without perfect matchings.)

Before turning to our main point we recall one instance of
\emph{Shearer's Lemma} \cite{CFGS}; this played a role in \cite{JKV} corresponding to that of
the present Theorem~\ref{TCuckler}, and we will find some lesser use for it here.
\begin{lemma}\label{Shearer}
For any r-graph $\h,$
\[
\mbox{$\log \Phi(\h)\leq r^{-1}\sum_{v\in V} h(v,\h).$}
\]
\end{lemma}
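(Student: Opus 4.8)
The plan is to prove Shearer's Lemma for perfect matchings by viewing a uniformly random perfect matching $\mathbf{M}$ of $\h$ as the object whose entropy we wish to bound, and applying the entropy chain rule together with the subadditivity that is the content of Shearer's inequality. Concretely, let $\mathbf{M}$ be uniform among the $\Phi(\h)$ perfect matchings, so $H(\mathbf{M}) = \log \Phi(\h)$ (base $e$). For each vertex $v\in V$, let $X(v)=X(v,\h)$ be the edge of $\mathbf{M}$ containing $v$; this is a function of $\mathbf{M}$, and conversely $\mathbf{M}$ is determined by the collection $(X(v))_{v\in V}$ (indeed by any sub-collection that hits every edge). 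The first step is simply to record that $\mathbf{M}\mapsto (X(v))_{v\in V}$ is a bijection onto its image, so $H(\mathbf{M}) = H((X(v))_{v\in V})$.

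The key step is the covering/subadditivity argument. Each edge $A\in\mathbf{M}$ is ``seen'' by exactly its $r$ vertices, i.e.\ the family $\{\{v\}:v\in V\}$ of singletons covers $V$ with the property that every vertex lies in exactly $r$ of the sets $\{v' : X(v')=X(v)\}$ — more to the point, the right framing is that we want to bound $H$ of the vector $(X(v))_v$, and the relevant ``covering'' is of the index set $V$ by the fibers of the matching. The cleanest route: since $\mathbf{M}=\bigcup_{v} X(v)$ and each $A\in\mathbf{M}$ equals $X(v)$ for exactly the $r$ vertices $v\in A$, the multiset $(X(v))_{v\in V}$ lists each edge of $\mathbf{M}$ exactly $r$ times. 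Hence by subadditivity of entropy,
\[
r\,\log\Phi(\h) = r\,H(\mathbf{M}) = H\big((X(v))_{v\in V}\big) \cdot \tfrac{r}{1} \;?
\]
— here one must be slightly careful; the correct inequality is that $H\big((X(v))_{v\in V}\big)\le \sum_{v\in V} H(X(v))$ by subadditivity, and separately $H(\mathbf{M}) = H\big((X(v))_{v\in V}\big)$ because the map is a bijection, while the factor $r$ enters because the more refined Shearer bound with the uniform $r$-cover $\{\,T_A := A : A\in\binom{V}{r}\,\}$ restricted to edges of the matching gives $r\,H(\mathbf{M}) \le \sum_{v} H(X(v))$ directly. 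I would present it via Shearer's Lemma in the standard form: if $\mathcal{F}$ is a family of subsets of $V$ such that every $v\in V$ lies in at least $k$ members of $\mathcal{F}$, then $k\,H(Z_V)\le \sum_{F\in\mathcal{F}} H(Z_F)$ for any random vector $Z_V=(Z_v)_{v\in V}$. Apply this with $Z_v = X(v)$, $k = r$, and $\mathcal{F} = \{A : A\in\binom{V}{r},\ A\cap(\text{something})\}$ — actually the simplest valid choice is $\mathcal{F}=\{\,N(v):=\{v\}\cup(\text{the other }r-1\text{ vertices sharing }v\text{'s edge})\}$, which is not fixed in advance. So instead I take $\mathcal{F} = \binom{V}{r}$ itself is too large; the honest clean choice is $\mathcal{F}=\{A_1,\dots\}$ ranging over all $r$-subsets, with each vertex in $\binom{n-1}{r-1}$ of them — wrong normalization.

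Let me state the approach I would actually commit to: use Shearer with $\mathcal{F}$ equal to the family of \emph{all} edges of $\h$, no — cleanest is the ``fiber'' version. Define, for each edge $A$ of the (random) matching, nothing; instead bound directly. We have $H(\mathbf{M})=H\big((X(v))_{v\in V}\big)$. Now group the coordinates: $(X(v))_{v\in V}$ determines, for each vertex $v$, the edge $X(v)$, and any $r$ coordinates $v\in A$ with $A=X(v)$ carry identical information. So $H\big((X(v))_{v\in V}\big) = H\big((X(v))_{v\in S}\big)$ for any $S$ meeting every matching edge; but we cannot fix such $S$. The resolution is exactly Shearer: averaging over the ``cover'' $\{V\setminus\{v\}:v\in V\}$, where each vertex lies in $n-1$ sets, gives $(n-1)H(\mathbf{M})\le\sum_v H((X(w))_{w\ne v})$, not what we want.

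I will therefore use the \emph{edge-indexed} Shearer cover: let $\mathcal{F}=\binom{V}{r}=\K$, and for $A\in\K$ let $Z_A := X(v_A)$ where... no. The genuinely correct and standard proof, which I would write out, is: apply Shearer's Lemma to the random vector $W=(W_v)_{v\in V}$ with $W_v:=X(v,\h)$ and the cover consisting of the $\binom{n-1}{r-1}$-regular family $\K$ — then each $v$ is covered $d:=\binom{n-1}{r-1}$ times, giving $d\,H(W)\le \sum_{A\in\K} H((W_v)_{v\in A})$; bound each $H((W_v)_{v\in A})\le \log\Phi(\h)$ trivially? No. The clean fact is $H((W_v)_{v\in A})\le r\log(\text{max degree})$... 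The hard part, and the step I expect to be the real obstacle to write crisply, is getting the constant $r$ rather than something larger: the trick is that $\sum_{A\in\K}H((W_v)_{v\in A})$ is not the right sum; one should instead observe that $\mathbf{M}$ is recovered from $(W_v)_v$, that each matching edge is counted $r$ times, and invoke subadditivity as $H((W_v)_{v\in V}) = H(\mathbf{M})$ combined with the reverse-direction bound $\sum_v H(W_v) \ge H((W_v)_{v\in V})$ — yielding only $\log\Phi(\h)\le \sum_v H(W_v)$, off by a factor $r$. To recover the $r$: condition. Process vertices in some order; when we reveal $W_{v}$ for the first vertex of a new edge it costs $H(W_v\mid\text{past})$, but the remaining $r-1$ vertices of that edge then cost $0$. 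Summing, $H(\mathbf{M}) = \sum_v H(W_v\mid W_{<v}) = \sum_{A\in\mathbf{M}} H(W_{v(A)}\mid\cdots)$ where $v(A)$ is $A$'s first vertex — and by symmetry/convexity averaging over all $r!$ orderings within each edge, each $H(W_{v(A)}\mid\cdots)\le \frac1r\sum_{v\in A}H(W_v)$ in expectation over the ordering, giving $H(\mathbf{M})\le \frac1r\sum_v H(W_v)$, i.e.\ the claim. So the plan's backbone is: (i) $\log\Phi(\h)=H(\mathbf{M})=\sum_v H(W_v\mid W_{\prec v})$ for a uniformly random vertex-order $\prec$; (ii) within each matching edge only the $\prec$-first vertex contributes, the other $r-1$ contribute $0$; (iii) average over $\prec$ and use that a uniformly random vertex of $A$ is first with probability $1/r$, plus $H(W_v\mid W_{\prec v})\le H(W_v)=h(v,\h)$; conclude $\log\Phi(\h)\le r^{-1}\sum_v h(v,\h)$. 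The main obstacle is presenting step (ii)–(iii) so that the factor $1/r$ is transparent; everything else is bookkeeping with the chain rule and $H(X\mid Y)\le H(X)$.
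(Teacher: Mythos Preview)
The paper does not prove this lemma; it is cited as a known instance of Shearer's inequality \cite{CFGS}. Your final approach---random vertex ordering plus chain rule, with the observation that only the $\prec$-first vertex of each matching edge contributes---is on the right track (and is in fact the skeleton of the paper's proof of the stronger Theorem~\ref{TCuckler}), but your steps (ii)--(iii) have a gap: ``$v$ is first in its edge'' depends on the random $\mathbf{M}$, so you cannot simply declare $H(W_v\mid W_{\prec v})=0$ for non-first $v$, nor legally ``sum over $A\in\mathbf{M}$'' when $\mathbf{M}$ is itself random.

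What makes the factor $1/r$ honest is the following. With $\sigma$ uniform and independent of $\mathbf{M}$, set $\mathbf{X}_v=(\sigma,(W_u)_{u\prec_\sigma v})$ and $I_v=\mathbf{1}[v\text{ is $\sigma$-first in }W_v]$. Then $I_v$ is determined by $\mathbf{X}_v$; when $I_v=0$, $W_v$ is determined by $\mathbf{X}_v$; and---the point your sketch is missing---$I_v$ is \emph{independent} of $\mathbf{M}$ (since $\Pr(I_v=1\mid\mathbf{M})=1/r$ for every realization), so $H(W_v\mid I_v=1)=H(W_v)$. These combine to give
\[
H(W_v\mid\mathbf{X}_v)=\Pr(I_v=1)\,H(W_v\mid\mathbf{X}_v,I_v=1)\le \tfrac1r\, H(W_v\mid I_v=1)=\tfrac1r\, h(v,\h),
\]
and summing over $v$ yields the lemma. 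Alternatively---the route you almost found and then abandoned---apply Shearer directly to the vector $(\mathbf{1}[A\in\mathbf{M}])_{A\in\K}$ with the cover $\{\K_v:v\in V\}$, where $\K_v=\{A\in\K:v\in A\}$: each $A\in\K$ is covered exactly $r$ times, and $H\big((\mathbf{1}[A\in\mathbf{M}])_{A\in\K_v}\big)=h(v,\h)$ since exactly one edge through $v$ lies in $\mathbf{M}$.
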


\nin
\emph{Orientation.}
The main purpose of this section is to recover (essentially) a missing
$-\gL$ ($= -(r-1)n/r$) in the bound of Lemma~\ref{Shearer}.
For example when $r=2$ (so $\gL=n/2$), the lemma bounds
$\log \Phi(G)$
for a $d$-regular, $n $-vertex graph $G$
by $(n /2)\log d$, which an observation of L.\ Lov\'asz and the author
(\cite[Eq.\ (8)]{Cuckler-K} or \cite{Alon-Friedland,Cutler-Radcliffe};
it is just the extension of
Br\'egman to not necessarily bipartite $G$) improves to
$\frac{n }{2d}\log(d!) = (n /2)\log d -\gL +o(n)$.
The missing $\gL$ was irrelevant in \cite{JKV},
since the argument there involved other losses that could not be made smaller
than $O(n)$;
here the present gain will eventually cancel the $-\gL$ in the bound
\eqref{At} of $\mA_t$ (hidden in the $\log \Phi_0$; see \eqref{mg0r}):
see the interplay of
\eqref{logPhi1} and \eqref{logPhi2} in Section~\ref{PLE}.

\mn

In what follows we will treat a p.m.\ $f$ as either a set of edges or
a function from vertices
to $\C{V}{r-1}$; we use $f_v$ for the edge of $f$ containing $v$
(taking the first view) and $f(v)$ for $f_v\sm v$ (taking the second).

For Theorem~\ref{TCuckler}
we consider a random (not necessarily uniform) p.m.\
$\ff$ of
a given $r$-graph $\h$ (with number of vertices divisible by $r$).
We use $v$ for vertices and $Y$ for
$(r-1)$-sets, and always assume $v\not\in Y$.


Set $p_v(Y)=\pr(\ff(v)=Y)$.  For a p.m.\ $f$, let
\[
T(v,f,Y) = \{B\in f: B\neq f_v, B\cap Y\neq \0\}
\]
and $\tau(v,f,Y)= |T(v,f,Y)|$.
Thus $\tau(v,f,Y)\leq r-1$, with equality iff
the vertices of $v\cup Y$ lie in
distinct edges of $f$ (thought of as ``generic" behavior
of $(v,Y)$ w.r.t.\ $f$).  With $f$ running over p.m.s of $\h$, set
\beq{gGvY}
\gG_v(Y) =\{f:\tau(v,f,Y)< r-1\}
\enq
(note this includes $f$'s with $f(v)=Y$)
and $\gc_v(Y)=\pr(\ff\in\gG_v(Y))$.

\begin{thm}\label{TCuckler}
With notation as above,
\begin{eqnarray}\label{fent}
H(\ff) &< &\mbox{$r^{-1} \sum_v H(\ff(v)) -\gL$}
\nonumber\\
&&~~~~~~~~
\mbox{$+ O(\sum_v\sum_Y p_v(Y)\gc_v(Y)^{1/(r-1)})+O(\log n).$}
\end{eqnarray}
\end{thm}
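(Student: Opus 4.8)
The plan is to follow the standard entropy‑counting strategy underlying Brégman's theorem and its extension in \cite{Cuckler-K}, refining the crude Shearer bound (Lemma~\ref{Shearer}) by exploiting the extra independence available when the $r-1$ vertices of a generic $(r-1)$-set $Y$ paired with $v$ lie in distinct edges. First I would expose $\ff$ one vertex at a time along a \emph{uniformly random} ordering $v_1\dots v_n$ of $V$ (independent of $\ff$), writing by the chain rule
\[
H(\ff)=H(\ff(v_1))+H(\ff(v_2)\mid \ff(v_1))+\cdots,
\]
and then average over the ordering. The point, exactly as in Brégman's argument, is that when we reveal $\ff(v_j)$ we may condition not only on the already‑revealed values but also on the \emph{set} of edges of $\ff$ that have been "touched" (contain a previously revealed vertex); knowing which of $v_j$'s $r-1$ partners lie in already‑revealed edges restricts where $\ff(v_j)$ can go. Writing this out, $H(\ff(v_j)\mid \text{past})$ is at most the logarithm of the number of admissible $(r-1)$-sets, and for a generic vertex $v$ sitting in an edge whose other $r-1$ vertices have been revealed in $k$ distinct ``earlier'' positions the relevant count shrinks by a factor like $(r-k)/\cdots$; integrating the position of $v$ within its own edge uniformly over $\{1\dots r\}$ produces the telltale $\frac1r\log(r!)$‑type saving, i.e.\ exactly the $-\gL+O(\log n)$ (here $\gL=(r-1)n/r$ and $\log(r!/r)=\log((r-1)!)$, summed over the $n/r$ edges, is $\gL+O(\log n)$ by Stirling).

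Next I would make this rigorous by the Brégman/Radhakrishnan‑style "expand the log" device: rather than bounding each conditional entropy termwise (which loses constants), one bounds $H(\ff)$ directly against an expectation, over $\ff$ and over the random ordering, of $\sum_v \log(\text{local count at }v)$, and then uses concavity of $\log$ together with the combinatorial identity that for a fixed perfect matching $f$ and a fixed edge $B\in f$, averaging $\log$ of the local factors over the orderings restricted to $B$ gives $\tfrac1r\log(r!)$ plus a correction governed by how often the edges around $B$ fail to be ``generic.'' Summing the main terms over all $v$ gives $r^{-1}\sum_v H(\ff(v)) - \gL + O(\log n)$; the $O(\log n)$ absorbs the Stirling error and the finitely many low‑order edges. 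The correction term is where the $\gc_v(Y)$'s enter: whenever $\ff\in\gG_v(Y)$ (some partner of $v$ shares an edge with another partner, or $\ff(v)=Y$ itself), the local count at $v$ is not reduced by the full generic factor, so we pay an extra $O(1)$ there; collecting these over the (random) ordering and over $\ff$ contributes at most $O(\sum_v\sum_Y p_v(Y)\,\gc_v(Y))$. To sharpen the exponent from $\gc_v(Y)$ to $\gc_v(Y)^{1/(r-1)}$ I would localize: condition on $\ff(v)=Y$ and note that the "bad" event for $v$ is a union over the $r-1$ vertices of $Y$ of the event that that particular partner collides; the per‑partner probabilities multiply out favourably, and a Hölder/AM–GM step (or simply the inequality $\min_i q_i \le (\prod q_i)^{1/(r-1)}$ bounding the most‑constraining partner) converts a product‑type bound into the claimed $(r-1)$‑th root — this is precisely the mechanism by which \cite{Cuckler-K} gets power savings and is the natural place to import their estimate.

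The main obstacle, I expect, is not the generic main term — that is classical — but controlling the error term at the right strength, namely getting the exponent $1/(r-1)$ on $\gc_v(Y)$ rather than a weaker power, while simultaneously keeping the additive slack at $O(\log n)$ rather than $O(n)$; this is exactly the improvement over \cite{JKV} that the ``Orientation'' paragraph flags. Doing so requires that the bookkeeping in the ordering‑average be done with some care: one cannot simply bound each non‑generic contribution by a constant independent of how ``close to generic'' the configuration is, because that reintroduces an $\Omega(n)$ loss in the regime that matters later. I would therefore set up the local count at $v$ as an explicit function of $\tau(v,\ff,Y)$ and of which partners collide, bound $\log(\text{local count})$ by its generic value minus $\log((r-1)!/\text{something})$, Taylor‑expand the ``something'' to first order, and show the first‑order term is $O(\gc_v(Y)^{1/(r-1)})$ after the ordering average — leaning on Theorem~1.2(a) of \cite{Cuckler-K} for the single‑vertex estimate and re‑summing. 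Auxiliary points — that we may restrict to $\h$ with $\Phi(\h)>0$, that divisibility of $|V(\h)|$ by $r$ is in force, and that the strict inequality in \eqref{fent} is cosmetic (any $O(\log n)$ dominates the loss of strictness) — are routine and can be dispatched in a sentence each.
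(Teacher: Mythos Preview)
Your broad framework (random ordering, chain rule, average over the order) is correct and matches the paper's start.  But the two features that distinguish this theorem from Shearer---the $-\gL$ and the exponent $1/(r-1)$---do not arise by the mechanisms you name, and as written your sketch would not produce either.  On $-\gL$: you claim ``$\log(r!/r)=\log((r-1)!)$, summed over the $n/r$ edges, is $\gL+O(\log n)$ by Stirling.''  This is false: $r$ is a fixed constant, so Stirling is meaningless, and $(n/r)\log((r-1)!)\neq (r-1)n/r=\gL$ (check $r=3$).  In the paper the $-\gL$ comes from something quite different.  One conditions on the coarser variable $\bZ_v$ (the set of still-available vertices when $v$ is first in its edge under the order, else $\ff_v$), and uses the \emph{exact} identity
\[
H(\ff(v)\mid \bZ_v)=r^{-1}H(\ff(v))+\sum_Y p_Y\sum_Z\Pr(Z\mid Y)\log\frac{\Pr(Z)}{\Pr(Z\mid Y)},
\]
so the $r^{-1}H(\ff(v))$ term appears cleanly (from $\Pr(v\text{ first})=1/r$), not via a log-of-count bound.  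The second term, after Jensen over $Z$ with fixed $|Z|=k$, becomes $\sum_k q_k\log(r_k/q_k)$, where in the generic case $r_k/q_k=s_k/q_k\approx (rt/n)^{r-1}$: this is because a generic $Y$ has its $r-1$ vertices in $r-1$ distinct $f$-edges, and $Y\sub \bZ_v$ requires all of them to follow $f_v$.  Then per vertex one gets $\frac{1}{n}\sum_{t=1}^{n/r}(r-1)\log(rt/n)\approx\frac{r-1}{r}\int_0^1\log x\,dx=-(r-1)/r$, summing to $-\gL$.  This is an integral over the position of $f_v$ among the $n/r$ edges of $f$, not a $\log(r!)$ saving.

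On the exponent $1/(r-1)$: there is no per-partner product structure for H\"older/AM--GM to act on; $\gc_v(Y)$ is the probability of a single event (two vertices of $Y\cup v$ share an $f$-edge), not a product of $r-1$ collision probabilities.  In the paper the error over the generic case is governed by $r_k\le \gc\,q_k+(1-\gc)s_k$, leading to a correction $\frac{1}{n}\sum_t\log\bigl(1+\gc(n/(rt))^{r-1}\bigr)$.  This is $O(\gc^{1/(r-1)})$ by a direct split of the sum at $t\asymp (n/r)\gc^{1/(r-1)}$: for smaller $t$ the summand is only logarithmically large, for larger $t$ it is $O(\gc(n/(rt))^{r-1})$, and both ranges contribute $O(n\gc^{1/(r-1)})$.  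The $1/(r-1)$ is just the exponent at which $\gc(n/(rt))^{r-1}=1$; it has nothing to do with $\min_i q_i\le(\prod q_i)^{1/(r-1)}$.  Finally, your fallback of ``leaning on Theorem~1.2(a) of \cite{Cuckler-K} for the single-vertex estimate'' is circular: the present theorem \emph{is} the $r$-uniform extension of that result, and the single-vertex estimate is exactly the computation above.
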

\nin
(Of course when $\ff$ is uniform, $H(\ff(v))$ is another name for $h(v,\h)$.)
Again, the point here is the ``$-\gL$"; the ugly terms following it are errors
we hope to ignore.

\mn
\emph{Proof.}
(The argument here is similar to that for
Theorem~1.2(a) in \cite{Cuckler-K}.)
Note we may assume
$\h=\K$ ($=\C{V}{r}$), since we can regard $\ff$ as a random
matching of $\K$ that doesn't use edges not belonging to $\h$.

We use $f_B$ for the restriction of $f$ (viewed as a function) to $B\sub V$.
For a permutation $\gs$ of $V$---always thought of as an ordering of $V$---and $v\in V$, set
$B(\gs,v)=\{w\in V: \gs(w)< \gs(v)\}$.
Let $\bgs$
be a random (uniform) permutation of $V$ and
$\XX_v = (\bgs,\ff_{B(\bgs,v)})$.
Then (by the ``chain rule" for entropy; see \cite[Theorem 2.2.1]{Cover-Thomas})
\begin{eqnarray}
H(\ff)
&=&\frac{1}{n !} \sum_{\gs}\sum_vH(\ff(v)|\ff_{B(\gs,v)})
\nonumber\\
&=&\sum_v\sum_{\gs}\sum_g \frac{1}{n !}\pr(\ff_{B(\gs,v)}=g)
H(\ff(v)|\gs,g)\nonumber\\
&=& \sum_v H(\ff(v)|\XX_v),\label{HY}
\end{eqnarray}
where $\gs$ ranges over permutations and,
given $\gs$, $g$ ranges over possible values
of $\ff_{B(\gs,v)}$ (and the conditioning on $(\gs,g)$ has
the obvious meaning).

Now let
\[
\bZ_v = \left\{\begin{array}{cl}
\ff_v&
\mbox{if $B(\bgs,v)\cap \ff(v)\neq \0$},\\
(V\sm\{v\})\sm \bigcup \{\ff_w:w\in B(\bgs,v)\}&\mbox{otherwise.}
\end{array}\right.
\]
The condition in the first line just says
$v$ is not the first vertex of $\ff_v$ in $\bgs$,
in which case $\ff_v$ is determined by $\ff_{B(\bgs,v)}$;
these cases will
be basically ignored in what follows.
In the remaining
cases $\bZ_v$ is the set of vertices that can (in principle) belong to $\ff(v)$
once we have specified $\ff(w)$ for $w$ preceding $v$ in $\bgs$.


Since $\bZ_v$ is determined by $\XX_v$, we have
$H(\ff(v)|\XX_v)\leq H(\ff(v)|\bZ_v)$, so, by \eqref{HY},
\beq{HfZ}
H(\ff)\leq \sum_v H(\ff(v)|\bZ_v);
\enq
so we would like to bound $H(\ff(v)|\bZ_v)$.


We now fix $v$ and write $\bZ$ for $\bZ_v$.
We use $Y$ for values of $\ff(v)$
and $Z$ for values of $\bZ$
\emph{not of the form f}, and set
$\pY =p_v(Y)$ and $\gcY=\gc_v(Y)$.
(See the paragraph preceding Theorem~\ref{TCuckler} for the notation.)
We use $\pr(Z)$ for $\pr(\bZ=Z)$,
$\pr(Z|Y)$ for $\pr(\bZ =Z|\ff(v)=Y)$ and so on.
(It may be worth stressing that $\pr$ refers to
$\bgs$ \emph{and} $\ff$, and that these are independent.)

Since $H(\ff(v)|\bZ= f)=0$, we have
\begin{eqnarray}
H(\ff(v)|\bZ)& = &
\sum_Z\pr(Z)\sum_Y\pr(Y|Z)\log \frac{1}{\pr(Y|Z)}\nonumber\\
&=&\sum_Y\sum_Z\pr(Y,Z)\log\frac{\pr(Z)}{\pr(Y,Z)}
\nonumber\\
&=&
\sum_Y\pY \left[\frac{1}{r}
\log \frac{1}{\pY } +\sum_Z \pr(Z|Y)\log\frac{\pr(Z)}{\pr(Z|Y)}
\right]\label{half}\\
&=& r^{-1}H(\ff(v)) +\sum_Y\pY \sum_Z \pr(Z|Y)\log\frac{\pr(Z)}
{\pr(Z|Y)}~.\label{nonneg}
\end{eqnarray}
(For \eqref{half} notice that independence of $\ff$ and $\bgs$ gives
$\sum_Z\pr(Z|Y)=1/r$ for any $Y$ for which $p_Y\neq 0$.)
We would like to show that the second term in \eqref{nonneg}
is less than about $-\gL$.


Fix $Y$ with $p_Y\neq 0$.  Let
$\mS =\{B(\bgs,v)\cap \ff(v)= \0\}$
(that is, $v$ is the first vertex of $f_v$ under $\bgs$)
and for $k\in [n-1]$ set
\[
q_k ~=~\sum\{\pr(Z|Y):Z\supseteq Y, |Z|=k\}
~=~\pr(\mS ,|\bZ|=k|\ff(v)=Y),
\]
\[
r_k ~=~ \sum\{\pr(Z):Z\supseteq Y, |Z|=k\} ~=~  \pr(\mS ,|\bZ|=k,\bZ\supseteq Y).
\]

\mn
(Notice that ``$|\bZ|=k$" and ``$\bZ\supseteq Y$" make sense once we
know $\mS $ holds, and that it is not really necessary to specify
``$Z\supseteq Y$" in the definition of $q_k$.)
Then the inner sum in \eqref{nonneg} is
\beq{rkqk}
\sum_kq_k\sum \left\{\frac{\pr(Z|Y)}{q_k}\log\frac{\pr(Z)}{\pr(Z|Y)}: |Z|=k\right\}
\leq
\sum_kq_k\log\frac{r_k}{q_k}
\enq
(using Jensen's Inequality), so that \eqref{fent} will follow from
\beq{main}
\sum_kq_k\log\frac{r_k}{q_k} ~<~ -(r-1)/r + O(\gcY^{1/(r-1)}+n^{-1}\log n).
\enq

\mn

We next discuss values of the $q_k$'s and $r_k$'s (with justifications to follow).
We have
\beq{qs}
q_k = \left\{\begin{array}{ll} 1/n &\mbox{if $k = r-1,2r-1\dots n-1$,}\\
0&\mbox{otherwise.}
\end{array}
\right.
\enq
For the $r_k$'s we omit precise specification and settle for
upper bounds:  with
\beq{ss}
s_k = \left\{
\begin{array}{ll}
\frac{1}{n }\frac{((k-r+1)/r)_{r-1}}{(n /r-1)_{r-1}}
&\mbox{if $k = r-1,2r-1\dots n -1$,}\\
0&\mbox{otherwise}
\end{array}\right.
\enq
(where $(a)_b = a(a-1)\cdots (a-b+1)$), we have
\beq{rk}
r_k \leq \gcY q_k +(1-\gcY )s_k.
\enq
\begin{proof}[Justification.]

In fact (we assert) \eqref{qs} holds even if we condition on the value of $\ff$;
that is,
\eqref{qs} is still correct if we
replace $q_k$ by
\[
q_k(f)
=\pr(\mS ,|\bZ|=k|\ff=f)
\]
for any $f$ (but we only use this when $f(v)=Y$).
Similarly, we have
\beq{forallf}
\forall f\not\in \gG_v(Y) ~~~s_k(f):=\pr(\mS ,|\bZ|=k,\bZ\supseteq Y|\ff=f) = s_k.
\enq
To see these, first observe that, given $\{\ff=f\}$ and $\mS $, if we order the edges of $f$
according to their first vertices under $\bgs$, then
$|\bZ|-(r-1)$ is $r$ times the number of edges of $f$ that follow
$f_v$, and this number is uniform
from $\{0\dots n/r-1\}$.
(So $q_k(f)$ is as in \eqref{qs}.)

Note further that (again, given $f$)
$Y\sub \bZ$ iff each of the $\tau:=\tau(v,f,Y)$
edges of $T(v,f,Y)$ follows $f_v$.
But once we know $|\bZ|=k$, the set of $f$-edges following $f_v$ is chosen uniformly
from the $((k-r+1)/r)$-subsets of the $(n/r-1)$-set $f\sm \{f_v\}$.
This gives \eqref{forallf} and, more generally (though we won't use it),
\[
s_k(f) =\frac{1}{n }\frac{((k-r+1)/r)_\tau}{(n /r-1)_\tau}
\]
for \emph{any} $f$, $\tau=\tau(v,f,Y)$ and $k$ as in the first line of \eqref{ss}.

Finally, bounding the
second probability in
\[
\mbox{$r_k = \sum_f\pr(f)\pr(\mS ,|\bZ|=k,\bZ\supseteq Y|\ff=f)$}
\]
by $q_k(f)=q_k$ for each $f\in\gG_v(Y)$ gives \eqref{rk}.
\end{proof}


Now returning to \eqref{main} we have, with $\gc=\gcY$ and
$t$ in the sums running from 1 to $n /r$,
\begin{eqnarray}
\sum_kq_k\log\frac{r_k}{q_k} &\leq&
\tfrac{1}{n }\sum \log [\gc + (1-\gc)\tfrac {(t-1)_{r-1}}{(n /r-1)_{r-1}}]
\nonumber\\
&<& \tfrac{1}{n }\sum\log [\gc + (1-\gc)(rt/n )^{r-1}]\nonumber\\
&=&
\tfrac{r-1}{n }\sum \log (rt/n )  +
\tfrac{1}{n }\sum \log (1+\gc\left(\left(\tfrac{n }{rt}\right)^{r-1}  -1)\right)\nonumber\\
&<&
-\tfrac{r-1}{r} + O\left(\tfrac{\log n }{n }\right) +
\tfrac{1}{n }\sum\log \left(1+\gc\left(\tfrac{n }{rt}\right)^{r-1}\right)
\label{penult}
\end{eqnarray}
(using $\sum\log (rt/n ) = \log[(r/n )^{n /r}(n /r)!]$
and Stirling's formula for the last line).

So for \eqref{main} it is enough to bound the sum in \eqref{penult} by
$O(\max\{n\gc^{1/(r-1)}, 1\})$.
For $\gc < (r/n)^{r-1}$
the sum is less than
\[
\sum \log (1+t^{-(r-1)}) < \sum t^{-(r-1)} =O(1)
\]
(recall $r\geq 3$).
For larger $\gc$, we
set $B = \lfloor (n /r)\gc^{1/(r-1)}\rfloor$
(noting that now $\gc  (n/r)^{r-1} < (2B)^{r-1}$) and
bound the sum in \eqref{penult} by
\[   
\sum_{1\leq t\leq B}\log  \left(2\gc\left(\tfrac{n }{rt}\right)^{r-1}\right) +
\sum_{t> B} \gc \left(\tfrac{n }{rt}\right)^{r-1}.
\]    
Here the first sum is less than
\[
B\log (2\gc (n /r)^{r-1}) - (r-1)\int_1^B\log x dx
~~~~~~~~~~~~~~~~~~~~~~~~~~~~~~~~~~~~~~~~~~
\]
\[
~~~~~~~~~
<
B\log (2^rB^{r-1}) - (r-1)[B\log B -B+1]
< B[r\log 2 +r-1] ,
\]
and for the second we have
\begin{eqnarray*}
\gc (n /r)^{r-1} \sum_{t> B} t^{-(r-1) }
&<&
\gc (n /r)^{r-1}\int_{x>B} x^{-(r-1)}dx \\
&<& (2B)^{r-1} \tfrac{1}{r-2}B^{-(r-2)}
\leq 2^{r-1}B.
\end{eqnarray*}
So the sum in \eqref{penult} is $O(B)=O(n\gc^{1/(r-1)})$ as desired.
\qed

\mn

We now turn to the two auxiliary lemmas mentioned at the beginning of this section.
The first of these will help in controlling the error terms in \eqref{fent}
when we come to apply Theorem~\ref{TCuckler}.

\begin{lemma}\label{TL1}
Suppose $p_i,\gc_i \in [0,1]$, $i=1\dots l $, satisfy
\beq{gz1}
l = n \gD ,
\enq
\beq{gz2}
\mbox{$\sum p_i =n ,$}
\enq
\beq{gz3}
\mbox{$\sum p_i\log (1/p_i) > n \log \gD  - O(n )$}
\enq
and
\beq{X}
\mbox{$\sum \gc_i = o(n \gD )$}.
\enq
Then for any nondecreasing $h:[0,1]\ra[0,1]$ with $h(x)\ra 0$ as $x\ra 0$,
\beq{gz5}
\mbox{$\sum p_i h(\gc_i) = o(n ).$}
\enq
\end{lemma}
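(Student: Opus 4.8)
The plan is to derive \eqref{gz5} from the standard fact that a probability distribution whose relative entropy with respect to the uniform one is $O(1)$ cannot place more than a $o(1)$-fraction of its mass on a $o(1)$-fraction of the ground set, combined with a truncation of $\sum_i p_i h(\gc_i)$ at a threshold for $\gc_i$. To set this up, put $q_i=p_i/n$, which by \eqref{gz2} is a probability distribution on $[l]$, and note that by \eqref{gz1} the uniform distribution on $[l]$ is $u_i=1/l=(n\gD)^{-1}$. A direct computation gives
\[
D(q\|u):=\sum_i q_i\log\frac{q_i}{u_i}=\log\gD-\frac1n\sum_i p_i\log(1/p_i),
\]
so \eqref{gz3} says exactly that $D(q\|u)=O(1)$; this, with \eqref{gz1}, is the only way \eqref{gz1}--\eqref{gz3} are used.

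Fix $\eta>0$ and set $S=S_\eta=\{i:\gc_i>\eta\}$. By Markov's inequality and \eqref{X}, $|S|\le\eta^{-1}\sum_i\gc_i=o(n\gD)$, so $u(S)=|S|/l=o(1)$ for each fixed $\eta$ as $n\to\infty$. Coarsening $q$ and $u$ to the two-cell partition $\{S,S^{c}\}$ and applying the data-processing inequality for relative entropy (equivalently, the log-sum inequality) gives
\[
q(S)\log\frac{q(S)}{u(S)}+(1-q(S))\log\frac{1-q(S)}{1-u(S)}\le D(q\|u)=O(1).
\]
Since $x\log x\ge-1/e$ and $1-u(S)\le1$, the left-hand side is at least $q(S)\log(1/u(S))-2/e$, so $q(S)\log(1/u(S))=O(1)$; as $u(S)=o(1)$, this forces $q(S)=o(1)$. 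That is, $\sum_{i\in S}p_i=o(n)$ for each fixed $\eta$.

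To conclude, use that $h$ is nondecreasing with values in $[0,1]$:
\[
\sum_i p_i h(\gc_i)=\sum_{i\in S}p_i h(\gc_i)+\sum_{i\notin S}p_i h(\gc_i)\le\sum_{i\in S}p_i+h(\eta)\sum_i p_i\le\big(q(S)+h(\eta)\big)n .
\]
Hence, for each fixed $\eta$, $\limsup_{n\to\infty}n^{-1}\sum_i p_i h(\gc_i)\le h(\eta)$; letting $\eta\to0$ and using $h(\eta)\to0$ gives \eqref{gz5}. The one genuinely non-routine step is the bound $\sum_{i\in S}p_i=o(n)$: the trivial $p_i\le1$ only gives $\sum_{i\in S}p_i\le|S|$, which may be of order $n\gD\gg n$, so one must exploit that \eqref{gz2}--\eqref{gz3} pin $(p_i)$ to within an additive $O(n)$ (in the entropy sense) of the flat profile $p_i\equiv1/\gD$ on the $l=n\gD$ coordinates, which is exactly what keeps its mass off the $o(l)$-sized set $S$. (A reader wishing to avoid information-theoretic inequalities can instead observe that \eqref{gz2}--\eqref{gz3} give $\sum_i g(p_i)=O(n)$ for the nonnegative convex function $g(x)=x\log(\gD x)+1/\gD-x$, and then apply Jensen's inequality on $S$ to bound the average of the $p_i$ over $S$ in terms of $|S|/l$.)
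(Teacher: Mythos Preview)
Your proof is correct and follows essentially the same route as the paper's: threshold the $\gc_i$'s, bound the small-$\gc_i$ contribution by $h(\text{threshold})\cdot n$, and for the large-$\gc_i$ set use Markov on $\sum\gc_i$ to show it has $o(l)$ indices, then use the entropy hypothesis to show the $p_i$'s put only $o(n)$ mass there. The only cosmetic differences are that you package the last step as $D(q\|u)=O(1)$ plus data-processing (the paper writes out the same two-block entropy inequality by hand), and that you fix $\eta$ and let $\eta\to 0$ at the end, whereas the paper chooses its threshold $\vs=o(1)$ diagonally so that $\vs\gg (\sum\gc_i)/(n\gD)$.
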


\begin{proof}
Let $X=\sum \gc_i$ and specify some $\vs$ with $1\gg \vs \gg X/(n \gD ) $.  From \eqref{gz2} we have
\[   
\mbox{$\sum_{\gc_i\leq \vs}p_ih(\gc_i) \leq n h(\vs) =o(n )$},
\]   
so may restrict attention to $i$'s with $\gc_i> \vs$.
Let
$\sum\{p_i:\gc_i>\vs\} =\ga n $.
Then
$  
\mbox{$\sum\{p_ih(\gc_i):\gc_i> \vs\} \leq \ga n $,}
$  
so it will be enough to show
\beq{gasmall}
\ga =o(1).
\enq

Let $T=|\{i:\gc_i>\vs\}|< X/\vs\ll n \gD  $.  We have
\begin{eqnarray}
n \log \gD  -O(n ) &<& \sum p_i\log (1/p_i) \nonumber\\
&<&
(1-\ga )n \log [n \gD /((1-\ga)n )]+\ga n \log [T/(\ga n )]~~~~\label{secondineq}\\
&=& n [\log \gD  + H(\ga) -\ga  \log (n \gD /T)].
\end{eqnarray}
(For \eqref{secondineq} we use the fact that $\sum_{i=1}^l x_i =a$ implies
$\sum x_i\log (1/x_i)\leq a\log (l /a)$.)
But then
$\ga\log(n \gD/T)-H(\ga) =O(1)$
implies \eqref{gasmall}.
(If $\ga\neq o(1)$ then the l.h.s.\ is $\go(\ga)$, implying that $\ga$ \emph{is} $o(1)$.)
\end{proof}

\begin{lemma}\label{TL2}
For  a probability distribution $p=(p_1\dots p_l )$ and
$\mu$ uniform distribution on $[l ]$,
if $H(p)= \log l  -o(1)$, then, with
$\|x\|=\sum |x_i|$,
\beq{TVD}
\|p-\mu\|=o(1);
\enq
equivalently, for some $\vs=o(1)$
and $\B=\{i:p_i\neq (1\pm \vs)/l \}$,

\beq{TVD1}
|\B|=o(l )
\enq
and
\beq{TVD2}
\mbox{$\sum_{i\in \B}p_i = o(1).$}
\enq
\end{lemma}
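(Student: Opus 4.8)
The plan is to prove the equivalence first and then establish the bound on the total variation distance via a standard entropy-stability (Pinsker-type) argument. For the equivalence, note that if $\|p-\mu\|=o(1)$, then we may take $\vs=o(1)$ tending to $0$ slowly enough that $\|p-\mu\|=o(\vs/l)\cdot l = o(\vs)$ is false in general---so instead I would argue directly: set $\vs_n\to 0$ with $\|p-\mu\|/\vs_n\to 0$ is not available since $\|p-\mu\|$ need not beat every rate, so the cleanest route is to fix the statement's $\vs$ \emph{after} knowing $\|p-\mu\|=:\eta=o(1)$: put $\vs=\sqrt{\eta}$ (or any $\vs$ with $\eta\ll\vs\ll 1$). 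Then $\B=\{i:|p_i-1/l|>\vs/l\}$ satisfies $(\vs/l)|\B|\le\sum_{i\in\B}|p_i-1/l|\le\eta$, so $|\B|\le \eta l/\vs = o(l)$, giving \eqref{TVD1}; and $\sum_{i\in\B}p_i\le \sum_{i\in\B}|p_i-1/l| + |\B|/l \le \eta + o(1) = o(1)$, giving \eqref{TVD2}. Conversely, given such a $\vs$ and $\B$, split $\|p-\mu\| = \sum_{i\notin\B}|p_i-1/l| + \sum_{i\in\B}|p_i-1/l| \le l\cdot(\vs/l) + \sum_{i\in\B}p_i + |\B|/l = \vs + o(1) + o(1) = o(1)$. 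So the two formulations are interchangeable, and it suffices to prove \eqref{TVD}.

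For \eqref{TVD} itself, the natural tool is the bound relating entropy deficit to $\ell^1$ distance from uniform. Writing $D(p\|\mu)=\sum_i p_i\log(l p_i) = \log l - H(p) = o(1)$ for the Kullback--Leibler divergence, Pinsker's inequality gives $\|p-\mu\|^2 \le 2 D(p\|\mu) = o(1)$, hence $\|p-\mu\| = o(1)$ as required. Thus the entire lemma reduces to (i) the identity $D(p\|\mu)=\log l - H(p)$, which is immediate from the definitions, and (ii) Pinsker's inequality, which is standard (see e.g.\ \cite{Cover-Thomas}). If one prefers to avoid quoting Pinsker, one can instead argue by hand: partition $[l]$ according to whether $p_i$ exceeds, say, $2/l$ or lies below $1/(2l)$; on the ``large'' part each term $p_i\log(lp_i)$ is bounded below by a positive constant times $p_i$, forcing $\sum_{\text{large}} p_i = o(1)$, and symmetrically the mass on the ``small'' part is $o(1)$ (since there are at most $l$ such indices, each contributing $\le 1/(2l)$ to $\mu$ but the entropy deficit controls how much mass can sit there); the contribution of the middle range to $\|p-\mu\|$ is automatically $o(1)$ after one checks the count of middle indices is $O(l)$ with per-index discrepancy that can be made uniformly small. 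The Pinsker route is cleaner, so I would use that.

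The main obstacle is essentially bookkeeping rather than mathematics: one must be careful that the $\vs$ appearing in the ``equivalent'' formulation is \emph{derived} from $\eta=\|p-\mu\|$ (any intermediate rate $\eta\ll\vs\ll1$ works) rather than quantified independently, and that all the $o(\cdot)$'s are taken in the same asymptotic regime $l=l(n)\to\infty$ with $H(p)=\log l - o(1)$. Once that is set up, both directions of the equivalence are one-line triangle-inequality estimates, and \eqref{TVD} is a direct application of Pinsker. I do not anticipate any genuine difficulty here.
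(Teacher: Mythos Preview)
Your proposal is correct and essentially matches the paper's proof. The equivalence argument is identical (the paper also takes $\vs=\|p-\mu\|^{1/2}$); for \eqref{TVD} the paper, rather than citing Pinsker, states a small convexity proposition (for $f$ with $f(0)=0$, $f''>0$, and $\E X=0$, one has $\E f(X)=\Omega(\min\{\E^2|X|,1\})$) and applies it with $f(x)=(1+x)\log(1+x)$ and $X=lp_i-1$, which amounts to reproving Pinsker in this special case.
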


\begin{proof}
For the equivalence,
note that \eqref{TVD1} and \eqref{TVD2} imply
\[
\mbox{$\|p-\mu\| \leq \sum_{i\in \B}|p_i- 1/l | +\vs
\leq \sum_{i\in \B}(p_i +1/l ) +\vs = o(1),$}
\]
while
\[
\|p-\mu\|\geq \left\{\begin{array}{ll}
\vs |\B|/l ,\\
\sum_{i\in \B}(p_i-1/l )
\end{array}\right.
\]
with (say) $\vs =\|p-\mu\|^{1/2}$ shows that \eqref{TVD} implies \eqref{TVD1} and then \eqref{TVD2}.

\mn

That the hypothesis of the lemma implies \eqref{TVD} is an instance of the
next observation, whose elementary proof we omit.
\begin{prop}\label{calculus}
If $I$ is an interval of $\Re$ and
$f:I\ra\Re$ is twice differentiable with $f(0)=0$ and $f''>0$, then for any
r.v.\ $X$ with $\E X=0$,
\[
\E f(X)=\gO(\min\{\E^2|X|,1\})
\]
(where the implied constant depends on $f$).
\end{prop}

Here, with $\|p-\mu\|=\gd$, we may take $f(x)=(1+x)\log (1+x)$ and
let $X$ be $\ga_i:=l p_i-1$, with $i $ chosen uniformly from $[l]$.
Then, noting that $\E X = \sum p_i-1=0$ and $\E|X|=\gd$, and applying
Proposition~\ref{calculus}, we have
\[
H(p) =\frac{1}{l} \sum(1+\ga_i)\log\frac{l}{1+\ga_i}
=\log l -\E f(X) = \log l -\gO(\gd^2),
\]
which with $H(p) =\log l -o(1)$ implies $\gd=o(1)$.
\end{proof}

\section{Properties $\mA$, $\mR$ and $\mB$}\label{BandR}

Properties here and in later sections are defined for a general $r$-graph $\h$,
and then, for example, the
event $\mA_t$ in Section~\ref{Skeleton} is $\{\mbox{$\bH_t\models \mA$}\}$.
In this section
we use $n$ and $m$ as defaults for the numbers of vertices and edges of $\h$,
so
\beq{nDmr}
nD_\h= mr
\enq
(recall $D$ is average degree).
We will always use
\beq{tm}
t=\Cc{n}{r}-m,
\enq
and
will \emph{tend} to use $A$ for edges and $Z$ or $U$ for general $r$-sets
(members of $\K$).
We assume throughout that we have fixed some positive $\eps$
(it will be essentially the one in Theorem~\ref{ThmX'}), upon which
the implied constants in ``$O(\cdot)$" and ``$\gO(\cdot)$"
depend.

\mn

We say $\h$ has the property $\mA$ (or $\h$ satisfies $\mA$, or $\h\models\mA$) if
\beq{Ag}
\mbox{$\log \Phi(\h) >\log\Phi_0 - \sum_{i=1}^t\gc_i -o(n), $}
\enq
(see \eqref{gci} for $\gc_i$),
and the property $\mR$ if
\beq{Rg1}
\mbox{a.a.\ degrees in $\h$ are asymptotic to $D_\h$,}
\enq
\beq{Rg2}
\gD_\h =O(D_\h), ~~~ \gd_\h =\gO(D_\h),
\enq
and
\beq{Rg3}
\mbox{all codegrees in $\h$ are $o(D_\h)$.}
\enq
As noted above, $\mA_t$ and $\mR_t$
of Section~\ref{Skeleton} are then $\{\bH_t\models \mA\}$
and $\{\bH_t\models \mR\}$.
Note that $\mR$ is ``robust," in that
\beq{robust}
\mbox{if $\h$ satisfies $\mR$ then so does $\h-Z$ for every $Z\in\K$.}
\enq
(We omit the easy justification, just noting that \eqref{Rg2} implies
$D_{\h-Z}\sim D_\h$ and that
each of \eqref{Rg1}-\eqref{Rg3} for $\h-Z$ depends on having \eqref{Rg3} for
$\h$.)

\mn

For $\mB$ a little notation will be helpful.
For  a finite set $S$ and $\ww: S \rightarrow \Re^+$
($:=[0, \infty)$),
set
\[
\overline \ww (S)= |S|^{-1} \sum_{a\in S} \ww(a),
\]
\[
\max \ww (S)= \max_{a \in S}\ww(a),
\]
and
\[
\maxr \ww(S)= \overline \ww (S)^{-1}\max \ww (S).
\]

For $\h \sub \K$ define $\ww_\h:\K\ra \Re^+$ by
\[
\ww_\h (Z) = \Phi(\h -Z),
\]
and say $\h$ has the property $\mB$ if
\[
\maxr \ww_{\h}  (\h ) =  O(1)
\]
(so the number of p.m.s containing any particular $A\in \h$ is not too large
compared to the average).
Then $\mB_t$ in Section~\ref{Skeleton} is $\{\bH_t\models\mB\}$,
and \eqref{Bi} is
\beq{Bi*}
\mbox{for $m>(1+\eps)(n/r)\log n$,
$~~\pr(\bH_{n,m}\models \mA\mR\ov{\mB}) =n^{-\go(1)}.$}
\enq
(More formally:  there is a fixed $C$, depending on the particular $o(\cdot)$'s
and implied constants in $\mA$ and $\mR$, such that
$\pr(\{\bH\models\mA\mR\}\wedge\{\maxr\ww_{\bH}(\bH) > C\})=n^{-\go(1)}$.)

As mentioned at the end of Section~\ref{Skeleton}, \eqref{Bi*}
is shown in Sections~\ref{More}-\ref{PLF1},
and with
\eqref{Ri} (likelihood of the $\mR_i$'s, proved in Section~\ref{Reg})
will complete the proof of Theorem~\ref{ThmX'}.

\mn

We conclude this section with the promised
\beq{proofofxibd}
\mbox{$\mB_{t-1}$ implies
{\rm \eqref{xibd}}.}
\enq
(Recall \eqref{xibd} says $\xi_t=O(\gc_t)$, where $\gc_t$
is now $n/(r(m+1))$; see \eqref{gci} and \eqref{tm}.)
\begin{proof}
Given $\bH_{t-1}=\h$, we have
$\xi_t\leq\max_{A\in \h} \ww_\h(A)/\Phi(\h)$,
while $\gc_t$ is the average of these ratios, since
\[
\mbox{$\sum_{A\in \h}\ww_\h(A)=\Phi(\h)n/r$}
\]
(and $|\h|=m+1$).
This gives \eqref{proofofxibd}.
\end{proof}

\section{More properties}\label{More}

We will get at $\mB$ (and \eqref{Bi*}) \emph{via} several auxiliary properties.
We introduce the first three of these here (there will be a couple more in Section~\ref{PLF1}),
together with
assertions concerning them that together imply \eqref{Bi*}.
Proofs of the assertions are mostly postponed to later sections.


Given $\h$, we now use $D$ for $D_\h$.
The first three auxiliary properties (for $\h$) are:

\begin{itemize}

\item
[$\mC$:] $~~$ if $Z\in \K$ satisfies
\beq{WZ}
\ww_\h(Z) > \Phi(\h)e^{-o(n)},
\enq
then for any $x\in Z$,
\beq{wZxy}
\mbox{$\ww_\h((Z\sm x)\cup y)\more \ww_\h(Z)d(x)/D~$ for a.e.\ $y\in V\sm Z$;}
\enq

\item
[$\mE$:]
$~~~
\ww_\h (A)\sim \Phi(\h)/D$ for a.e.\ $A\in  \h$;

\item
[$\mF$:]
$
~~~\ww_\h(Z)\sim \Phi(\h)/D$ for a.e.\ $Z \in \K$.

\end{itemize}
(More formally, e.g.\ for $\mE$:
there is $\vs=\vs(n)=o(1)$ such that
$|\{A\in \h:  \ww_\h(A)\neq (1\pm \vs)\Phi(\h)/D\}| < \vs|\h|$.)
For perspective on $\mE$ and $\mF$---and for use below---note
that (using \eqref{nDmr})

\beq{wgavg}
(\overline \ww_\h(\h)=) ~ |\h|^{-1}\sum_{A\in \h}\ww_\h(A)
= |\h|^{-1} \Phi(\h)n/r =\Phi(\h)/D.
\enq

\mn

We now use $\bH$ for $\bH_{n,m}$ and
$\h$ for a general $m$-edge $r$-graph on $n$, with $n$ and $m$ as in \eqref{Bi*},
and sometimes write
\[
\mX \Rastar
\mZ
\]
for $\pr(\mX\ov{\mZ})=n^{-\go(1)}$;
so e.g.\ the conclusion of \eqref{Bi*} becomes
\beq{B''}
\{\bH\models \mA\mR\}\Rastar \{\bH\models \mB\}.
\enq


The aforementioned assertions are as follows.

\begin{lemma}\label{LemmaE}
If $\h$ satisfies $\mA\mR$ then it satisfies $\mE$.
\end{lemma}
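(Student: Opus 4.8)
\textbf{Proof proposal for Lemma~\ref{LemmaE}.}

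The plan is to use Theorem~\ref{TCuckler} (the Br\'egman-like entropy bound) together with Lemma~\ref{Shearer} to squeeze the $h(v,\h)$'s, and then to transfer this squeeze to the $\ww_\h(A)$'s \emph{via} Lemma~\ref{TL2}. First I would take $\ff$ to be a uniformly random perfect matching of $\h$ (which exists since $\mA$ forces $\Phi(\h)>0$), so that $H(\ff)=\log\Phi(\h)$ and $H(\ff(v))=h(v,\h)$. Property $\mA$ gives the lower bound $\log\Phi(\h)>\log\Phi_0-\sum\gc_i-o(n)$, which by the computations in Section~\ref{Skeleton} (combining \eqref{mg0r} and \eqref{Em}) reads
\[
\log\Phi(\h)>(n/r)\log D-\gL-o(n),
\]
using $\mR$'s average-degree information to identify $rm/n$ with $D=D_\h$ up to $o(n)$ in the exponent. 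On the other hand, Theorem~\ref{TCuckler} gives $H(\ff)<r^{-1}\sum_v h(v,\h)-\gL+(\text{error})+O(\log n)$; I would first need to argue the error term $O(\sum_v\sum_Y p_v(Y)\gc_v(Y)^{1/(r-1)})$ is $o(n)$, which is exactly what Lemma~\ref{TL1} is built for, with $l=n\gD_\h=\gO(n D)$ entries, $\sum p_i = n$, the entropy lower bound $\sum_v h(v,\h)\geq r\log\Phi(\h)-\text{(the }\gL\text{ and errors)}$ coming around again, and $\sum\gc_v(Y)=o(nD)$ to be checked from genericity. Granting that, the two displays combine to give $r^{-1}\sum_v h(v,\h)>(n/r)\log D-o(n)$, i.e.\ $\sum_v h(v,\h)>n\log D-o(n)$; but Lemma~\ref{Shearer} says $\log\Phi(\h)\leq r^{-1}\sum_v h(v,\h)$, and each individual $h(v,\h)\leq\log|\K_v|\leq \log\gD_\h+O(\log n)=\log D+O(\log n)$ (using $\mR$), so in fact \emph{every} $h(v,\h)$ is $\log D-o(1)$ in the averaged/a.e.\ sense.

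Next I would localize: for a.e.\ vertex $v$, $h(v,\h)=H(\ff(v))=\log D-o(1)$, and $\ff(v)$ ranges over a set of size $d(v)\sim D$ (for a.e.\ $v$, by $\mR$). So Lemma~\ref{TL2} applies to the distribution of $\ff(v)$: its total variation distance to the uniform distribution on $\h_v$ is $o(1)$, hence $p_v(Y)=\pr(\ff(v)=Y)=(1\pm o(1))/d(v)$ for all but an $o(1)$-fraction of $Y\in\h_v$ carrying all but $o(1)$ of the mass. Now $\pr(\ff(v)=Y)=\ww_\h(A)/\Phi(\h)$ where $A=Y\cup v$, so for a.e.\ $v$ and a.e.\ $A\ni v$ we get $\ww_\h(A)\sim \Phi(\h)/d(v)\sim \Phi(\h)/D$. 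Finally I would assemble this into a statement about a.e.\ \emph{edge} $A$ of $\h$ (rather than a.e.\ edge at a.e.\ vertex): since all but $o(n)$ vertices are ``good'' and each has $\sim D$ incident edges all but an $o(1)$-fraction of which are good, a double-counting / union-bound over the $O(n)$ bad vertices and the bad edges at good vertices shows the set of $A\in\h$ failing $\ww_\h(A)\sim\Phi(\h)/D$ has size $o(m)=o(|\h|)$, which is exactly $\mE$ (in the formal $\vs$-formulation quoted after the definition).

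The main obstacle I expect is verifying the hypotheses of Lemma~\ref{TL1} cleanly---in particular establishing $\sum_v\sum_Y p_v(Y)\gc_v(Y)=o(nD)$ (the condition \eqref{X}), i.e.\ that the ``nongeneric'' probabilities $\gc_v(Y)$ are small on average. This is a genericity fact about a random perfect matching of $\h$: for a typical $(v,Y)$ the $r$ vertices of $v\cup Y$ should land in distinct edges of $\ff$, and controlling the exceptions requires the codegree bound \eqref{Rg3} in $\mR$ (so that a fixed pair of vertices rarely shares an edge) plus some quantitative handle on $\Phi(\h-x-y)/\Phi(\h)$-type ratios. There is a minor circularity to navigate as well: Lemma~\ref{TL1} wants the entropy lower bound $\sum h(v,\h)>n\log\gD-O(n)$ as an \emph{input}, while I am also using the entropy inequality to derive conclusions about the $h(v,\h)$'s---but this is only apparent, since the lower bound on $\sum h(v,\h)$ follows directly from $\mA$ via Lemma~\ref{Shearer} ($\sum h(v,\h)\geq r\log\Phi(\h)$) independently of Theorem~\ref{TCuckler}, so the logic is: $\mA$ $\Rightarrow$ lower bound on $\sum h(v,\h)$ $\Rightarrow$ (with $\mR$) Lemma~\ref{TL1} kills the error term in Theorem~\ref{TCuckler} $\Rightarrow$ matching upper bound forces each $h(v,\h)\doteq\log D$ $\Rightarrow$ Lemma~\ref{TL2} $\Rightarrow$ $\mE$.
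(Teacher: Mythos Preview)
Your proposal is essentially correct and follows the paper's approach (Section~\ref{PLE}) step for step: lower bound on $\log\Phi(\h)$ from $\mA$, upper bound from Theorem~\ref{TCuckler} with error controlled by Lemma~\ref{TL1}, then squeeze $h(v,\h)$ against $\log d(v)$ and apply Lemma~\ref{TL2}.

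Two small corrections. First, in applying Lemma~\ref{TL1} the index set is $\{(v,Y):Y\cup v\in\h\}$, so $l=\sum_v d(v)=nD$ exactly (and the lemma's parameter $\gD$ is $D$, not $\gD_\h$); condition \eqref{X} is $\sum_v\sum_Y\gc_v(Y)=o(nD)$ with \emph{no} factor of $p_v(Y)$. Second, the verification of \eqref{X} is much simpler than you anticipate and needs no matching-count ratios: swap the order of summation to write
\[
\sum_v\sum_Y\gc_v(Y)=\sum_f\pr(\ff=f)\,|\{(v,Y):f\in\gG_v(Y)\}|,
\]
and observe that for any fixed p.m.\ $f$, the pair $(v,Y)$ lies in $\gG_v(Y)$ only if the edge $Y\cup v$ meets some member of $f$ in at least two vertices; counting such edges via the codegree bound \eqref{Rg3} gives $|\{(v,Y):f\in\gG_v(Y)\}|\leq (n/r)\binom{r}{2}\kappa r=o(nD)$ with $\kappa$ the maximum codegree. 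No control of $\Phi(\h-x-y)/\Phi(\h)$ is needed.
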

\begin{lemma}\label{LemmaF}
$\{\bH\models \mA\mR\} \Rastar \{\bH\models \mF\}$.
\end{lemma}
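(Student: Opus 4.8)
\textbf{Proof proposal for Lemma~\ref{LemmaF}.}

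The plan is to bootstrap from Lemma~\ref{LemmaE}, which already says that on the event $\{\bH\models\mA\mR\}$ the weight $\ww_\h(A)$ is concentrated near $\Phi(\h)/D$ for a.e.\ \emph{edge} $A$ of $\h$; I want the same concentration for a.e.\ $r$-set $Z\in\K$, including non-edges. The key structural observation is that $\ww_\h$ is, up to a harmless relabeling, insensitive to whether $Z$ happens to lie in $\h$: since $\bH$ is uniform among $m$-edge $r$-graphs and $m\gg(n/r)\log n$ while $|\K|=\binom nr$, a ``typical'' $Z\in\K$ looks, from the point of view of $\ww_\h(Z)=\Phi(\h-Z)$, just like a typical edge. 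Concretely, I would first record the averaging identity: summing $\ww_\h(Z)=\Phi(\h-Z)$ over \emph{all} $Z\in\K$ counts, for each p.m.\ $f$ of $\h$, the number of $Z$ disjoint from every edge of $f$—but there are none—so instead one uses the complementary count over $Z$ \emph{not} in $f$ and relates $\sum_{Z\in\K}\ww_\h(Z)$ to $\Phi(\h)$ times an explicit combinatorial factor, landing at $\overline\ww_\h(\K)=\Theta(\Phi(\h)/D)$ paralleling \eqref{wgavg}. (Here I'd be a little careful: the clean identity \eqref{wgavg} is for the sum over edges; for the sum over all of $\K$ one gets a comparable but not identical normalization, which is fine since $\mF$ only asks for the right order and ``$\sim$'' after the dominant term is pinned down by $\mA$.)

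The main work, and the step I expect to be the real obstacle, is moving the concentration from edges to arbitrary $r$-sets. Two routes seem available. The first is a \emph{swap/exchange} argument: for $Z\notin\h$, compare $\ww_\h(Z)=\Phi(\h-Z)$ to $\ww_\h(Z')$ for an edge $Z'$ obtained from $Z$ by one or a bounded number of single-vertex substitutions, using property $\mC$ (the relation \eqref{wZxy}, $\ww_\h((Z\sm x)\cup y)\more \ww_\h(Z)d(x)/D$ for a.e.\ $y$) to control how $\ww_\h$ changes under such a substitution, together with $\mR$'s degree bounds \eqref{Rg2} to keep the multiplicative factors $d(x)/D$ bounded above and below. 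Iterating a constant number of times and invoking $\mE$ at the endpoint then forces $\ww_\h(Z)=\Theta(\Phi(\h)/D)$ for a.e.\ $Z$; upgrading $\Theta$ to $\sim$ uses that $\mA$ lower-bounds $\log\Phi(\h)$ so tightly that, combined with the averaging identity for $\sum_{Z\in\K}\ww_\h(Z)$, any set of $Z$'s with $\ww_\h(Z)$ bounded away from $\Phi(\h)/D$ must have $o(|\K|)$ size. The second route, probably cleaner to write, is to exploit symmetry of $\bH_{n,m}$ directly: condition on $\bH$ and note that for the \emph{random} uniform $Z\in\K$ the law of $\ww_\h(Z)$ only negligibly distinguishes $Z\in\h$ from $Z\notin\h$ because $\Phi(\h-Z)$ depends on $\h$ only through $\h-Z$, and $\h$ versus $\h-Z$ differ in at most one edge—so $\ww_\h(Z)\le\Phi(\h)$ always and $\ww_\h(Z)=\ww_{\h-Z}(Z)$ makes the ``edge vs.\ non-edge'' distinction vanish; then apply Lemma~\ref{LemmaE} to the $(m-1)$- or $m$-edge graph as appropriate and absorb the one-edge discrepancy, which is exponentially negligible against the $e^{o(n)}$ slack in all the relevant statements.

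Assembling: (i) prove the averaging identity giving $\overline\ww_\h(\K)=\Theta(\Phi(\h)/D)$; (ii) on $\{\bH\models\mA\mR\}$, invoke Lemma~\ref{LemmaE} to get edge-concentration, and use $\mC$ and the robustness \eqref{robust} of $\mR$ under edge deletion to transfer concentration from $\h$ to $\h-Z$ and hence to all typical $Z$; (iii) pin the constant to $1$ using the lower bound from $\mA$ against the exact average. The exceptional probability never re-enters after Lemma~\ref{LemmaE}: steps (ii)--(iii) are deterministic consequences of $\mA\mR\mC$ (and we may assume $\mC$ here, or fold its $n^{-\go(1)}$-probable failure into the $\Rastar$, just as Lemma~\ref{LemmaE} is used), so the whole statement inherits the $\Rastar$ form. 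The hardest point to get right is keeping the substitution argument in step~(ii) from accumulating error: each swap is only valid for \emph{a.e.}\ $y$, so chaining a bounded number of them requires a union bound over the substituted coordinates and a counting argument that the set of ``bad'' $Z$ produced this way is still $o(|\K|)$—this is where I'd spend the most care, and where the precise shape of the ``$\more$'' in \eqref{wZxy} (lower bound only, hence needing $\mF$-type upper control from the averaging identity to close the loop) matters.
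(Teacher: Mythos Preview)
Your proposal has a genuine gap: neither route can work as stated, and the paper's actual argument is fundamentally different (the paper itself calls this step ``the heart of our argument; certainly it was the part whose proof took longest to find'').

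\textbf{Route 1 is circular.} You propose to use property $\mC$ (the swap relation \eqref{wZxy}) to deduce $\mF$. But the only route to $\mC$ is Lemma~\ref{LemmaC}, whose hypothesis is $\{\bH-Z\models\mF\}$. In the paper's logical flow (see the proof of \eqref{Bi*}), Lemma~\ref{LemmaF} is established first and then fed into Lemma~\ref{LemmaC} to yield $\mC$; your ``we may assume $\mC$ here'' reverses this dependence.

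\textbf{Route 2 is a non-argument.} Once $\h$ is fixed, $\ww_\h(Z)$ is a deterministic function of $Z$, and whether its values on edges and non-edges agree is exactly what you are trying to prove. The observation that $\ww_\h(Z)=\Phi(\h-Z)$ is the same whether or not $Z\in\h$ is true but does not help: Lemma~\ref{LemmaE} controls $\ww_\h(A)$ for $A\in\h$, and no one-edge perturbation transfers this to $Z\notin\h$. Your averaging identity also fails: $\sum_{Z\in\K}\ww_\h(Z)$ counts all near-perfect-matchings (collections of $n/r-1$ disjoint edges), and those whose uncovered $r$-set is not an edge are invisible to $\Phi(\h)$, so you get no independent handle on $\sum_{Z\notin\h}\ww_\h(Z)$.

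\textbf{What the paper actually does.} It introduces extra randomness: a uniform $\bT\in\binom{\bH}{\tau}$ with $\log n\ll\tau\ll\log^2 n$, sets $\bF=\bH\sm\bT$ and $\gz=e^{-\tau/D}$, and defines an event $\mV$ saying that a.e.\ $A\in\bT$ has $\ww_{\bF}(A)\sim\gz\Phi'$ while a $\theta$-fraction of $\K\sm\bH$ still has $\ww_{\bF}$-weight far from $\gz\Phi'$. From viewpoint~A (choose $\bH$, then $\bT$), one shows $\pr(\mV\mid\mA\mR\mQ)=1-o(1)$ via a martingale argument (with carefully tuned parameters \eqref{param1}--\eqref{param4}), using Lemma~\ref{LemmaE} applied to each $\h-U$ to control heavy edges (see \eqref{heavysum}). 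From viewpoint~B (choose $\bF$, then $\bT$), $\bT$ is uniform on $\K\sm\bF$, so $\mV$ forces $\bT$ to pathologically avoid a positive fraction of $\K\sm\bF$, giving $\pr(\mV)=n^{-\go(1)}$. Together these yield $\pr(\mA\mR\mQ)=n^{-\go(1)}$. No deterministic implication $\mA\mR\Rightarrow\mF$ is proved or expected; the randomness of $\bH$ is used essentially, via the two-round exposure.
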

\begin{lemma}\label{LemmaC}
For $x\in Z\in \K$,
\[
\{\bH\models \mR\}\wedge \{\bH-Z\models \mF\}\Rastar \{(\bH,Z,x)\models \eqref{wZxy}\}.
\]
\end{lemma}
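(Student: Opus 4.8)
\textbf{Proof proposal for Lemma~\ref{LemmaC}.}

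The plan is to fix $x\in Z\in\K$ and, working on the event $\{\bH\models\mR\}\wedge\{\bH-Z\models\mF\}$, estimate $\ww_\h((Z\sm x)\cup y)=\Phi(\h-((Z\sm x)\cup y))$ for a typical $y\in V\sm Z$. First I would observe that $\Phi(\h-((Z\sm x)\cup y))$ counts perfect matchings of $\g:=\h-(Z\sm x)$ that avoid $y$, equivalently $\Phi(\g)-\Phi(\g-y)=\Phi(\g)-\sum_{A\ni y,A\in\g}\Phi(\g-A)$, where the sum is over edges $A$ of $\g$ through $y$. So the task splits into two pieces: (i) show that removing $y$ from $\g$ costs essentially a $d_\g(y)/D_\g$ fraction, i.e.\ $\Phi(\g-y)\sim\Phi(\g)\,d_\g(y)/D_\g$ (this is exactly an instance of property $\mE$ applied to $\g$, suitably massaged), and (ii) relate $\Phi(\g)=\Phi(\h-(Z\sm x))$ back to $\ww_\h(Z)$ and $d_\h(x)$, $D$.

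For piece (ii), note that $\ww_\h(Z)=\Phi(\h-Z)=\Phi(\g-x)=\sum_{A\ni x,A\in\g}\Phi(\g-A)$, a sum over the $d_\g(x)$ edges of $\g$ through $x$; under $\mR$ (robustness, \eqref{robust}) $\g=\h-(Z\sm x)$ still satisfies $\mR$ (its vertex/edge counts differ from $\h$'s by $O(1)$), so $d_\g(x)=(1\pm o(1))D$ and the degrees are uniformly $\Theta(D)$. The hypothesis $\bH-Z\models\mF$ says $\ww_{\h-Z}(U)\sim\Phi(\h-Z)/D_{\h-Z}$ for a.e.\ $U$; since $\h-Z$ and $\g$ differ only by the single vertex $x$ and one round of edge deletions, I would transfer this to a statement that a.e.\ $A\in\g$ (in particular a.e.\ $A\ni x$) has $\Phi(\g-A)\sim\Phi(\g)/D_\g$. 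Summing over the $d_\g(x)$ edges through $x$ then gives $\ww_\h(Z)\gtrsim\Phi(\g)d_\h(x)/D$ (and in fact asymptotic equality up to the $e^{-o(n)}$ slack allowed in \eqref{WZ}), so $\Phi(\g)\lesssim\ww_\h(Z)\,D/d_\h(x)$, hence combining with (i), $\ww_\h((Z\sm x)\cup y)=\Phi(\g)-\Phi(\g-y)\sim\Phi(\g)(1-d_\g(y)/D_\g)$; one checks that $(1-d_\g(y)/D_\g)$ is bounded below by a constant for a.e.\ $y$ (by $\mR$, since a.a.\ degrees are $\sim D$ and $D/n\to 0$), yielding $\ww_\h((Z\sm x)\cup y)\gtrsim\ww_\h(Z)d_\h(x)/D$ for a.e.\ $y\in V\sm Z$, which is \eqref{wZxy}. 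The whole chain of ``$\sim$'' replacements introduces only $o(1)$ relative errors on all but an $o(1)$-fraction of $y$'s, matching the ``$\more$/a.e.'' phrasing of the target.

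The main obstacle I expect is piece (i): proving $\Phi(\g-y)\sim\Phi(\g)d_\g(y)/D_\g$, i.e.\ that the edges through a typical vertex $y$ carry essentially the ``right'' share of perfect matchings, uniformly enough. This is essentially property $\mE$ again but applied to the modified graph $\g$ rather than to $\bH$ itself, and the delicate point is that $\g=\h-(Z\sm x)$ is \emph{not} a fresh sample of $\bH_{n',m'}$ — it is $\bH$ conditioned on containing the specified structure. So I cannot just cite Lemma~\ref{LemmaE} verbatim; instead I would argue that $\mE$ (and the a.e.\ control of $\ww$-values it provides) passes from $\h$, or from $\h-Z$, to $\g$ because the relevant quantities change by controlled amounts under deletion of $O(1)$ vertices and a bounded number of edges — here robustness of $\mR$ does the heavy lifting, together with the simple identity $\Phi(\h'-A)\le\Phi(\h')$ and the averaging identity \eqref{wgavg}. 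I would also need the elementary fact that the ``exceptional'' $A$'s or $y$'s stay an $o(1)$-fraction after these bounded perturbations, which follows since deleting $O(1)$ vertices changes $|\K|$ and $|\h|$ by a $1-o(1)$ factor. If a direct transfer proves awkward, the fallback is to prove a ``one-step'' version of $\mE$ from scratch for $\g$ by the same entropy/Shearer input (Lemma~\ref{Shearer}, Theorem~\ref{TCuckler}) used to establish Lemma~\ref{LemmaE}, since those tools apply to any $r$-graph satisfying $\mA\mR$, and $\g$ inherits $\mR$; the only thing to check is that $\g$ also (essentially) inherits $\mA$, which it does because $\Phi(\g)$ differs from a corresponding $\Phi_t$ by a factor that is $e^{o(n)}$ on the relevant event.
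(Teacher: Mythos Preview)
Your proposal has a genuine gap, and in fact two.

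First, the identities you write down are not well-formed. With $\g:=\h-(Z\sm x)$, the vertex set of $\g$ has size $n-(r-1)$, which is not divisible by $r$; so $\Phi(\g)=0$, and decompositions like $\Phi(\g)=\sum_{A\ni x}\Phi(\g-A)$ or $\Phi(\g)-\Phi(\g-y)$ carry no information. (You also have a sign slip: $\Phi(\h-((Z\sm x)\cup y))$ \emph{equals} $\Phi(\g-y)$, not $\Phi(\g)-\Phi(\g-y)$.) The correct identity, which the paper uses, is
\[
\ww_\h((Z\sm x)\cup y)=\sum\{\ww_{\h-Z}(S\cup y):S\in\tbinom{W\sm y}{r-1},~S\cup x\in\h\},
\]
obtained by splitting off the edge through $x$ in a p.m.\ on $V\sm((Z\sm x)\cup y)$.

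Second, and more fundamentally, your argument is entirely deterministic: you try to conclude \eqref{wZxy} from the \emph{properties} $\mR$ and $\mF$ alone. But $\mF$ for $\h-Z$ only says a.e.\ $r$-set $U\sub W$ has $\ww_{\h-Z}(U)\sim\Phi(\h-Z)/D$; it says nothing about the particular $S$'s with $S\cup x\in\h$, and an adversarial $\h$ could arrange for \emph{all} of those $S\cup y$'s to be exceptional while still satisfying $\mF$. The ``$\Rastar$'' in the statement is essential: the paper conditions on $\bH'':=\{A\in\bH:A\cap Z\neq\{x\}\}$ (which fixes $\bG=\bH-Z$ and hence all the weights $\ww_{\bG}$), then uses the \emph{remaining randomness} of $\bH':=\{A\in\bH:A\cap Z=\{x\}\}$: the $d'(x)\sim d_\h(x)$ sets $S$ appearing in the displayed sum are a random sample from $\tbinom{W}{r-1}$, and a Chernoff bound (Theorem~\ref{Cher'}, using $D=\gO(\log n)$) gives that with probability $1-n^{-\go(1)}$ all but $o(d'(x))$ of them land on ``good'' $S\cup y$'s. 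Nothing in your outline supplies this probabilistic step, and without it the conclusion does not follow.
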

\begin{lemma}\label{RCEFB}
If $\h$ satisfies $\mR\mF\mC$ then it satisfies $\mB$.
\end{lemma}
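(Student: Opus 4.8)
The plan is to show directly that $\ww_\h(A)=O(\Phi(\h)/D)$ for every edge $A\in\h$; since $\overline\ww_\h(\h)=\Phi(\h)/D$ by \eqref{wgavg}, this is exactly property $\mB$. Fix $A=\{x_1,\ldots,x_r\}\in\h$. If $\ww_\h(A)\le\Phi(\h)/D$ there is nothing to prove, so assume $\ww_\h(A)>\Phi(\h)/D$; in particular $\ww_\h(A)>\Phi(\h)e^{-o(n)}$ (since $D\le\binom{n-1}{r-1}$ is only polynomial in $n$), so $A$ satisfies \eqref{WZ}. The idea is then to use $\mC$, together with the bound $\gd_\h=\gO(D)$ of \eqref{Rg2}, to transport the (assumed large) weight of $A$ to almost every $r$-set of $\K$, where $\mF$ pins the weight down to $\sim\Phi(\h)/D$.

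Concretely, I would build a chain $A=Z_0,Z_1,\ldots,Z_r$ in $\K$ in which $Z_k=(Z_{k-1}\sm x_k)\cup y_k$ for a new vertex $y_k$ — note $x_k\in Z_{k-1}=\{y_1,\ldots,y_{k-1},x_k,\ldots,x_r\}$, so round $k$ replaces one of the original vertices of $A$. Each intermediate $Z_{k-1}$ will satisfy \eqref{WZ}, since (as one checks inductively) its weight stays within a fixed factor of $\ww_\h(A)$, hence well above $\Phi(\h)e^{-o(n)}$; so $\mC$ applies with $Z=Z_{k-1}$ and $x=x_k$, giving, for all but $o(n)$ choices of $y_k\in V\sm Z_{k-1}$,
\[
\ww_\h(Z_k)\more\ww_\h(Z_{k-1})d(x_k)/D\ge(1-o(1))(\gd_\h/D)\,\ww_\h(Z_{k-1}).
\]
Writing $\gd_\h/D\ge b_0=\gO(1)$ and iterating $r$ times, there is a constant $c=c(r,b_0)>0$ so that, for all but $o(n^r)$ ordered $r$-tuples $(y_1,\ldots,y_r)$ of distinct vertices (summing the $o(n)$ exceptional choices over the $r$ rounds contributes $r\cdot n^{r-1}\cdot o(n)=o(n^r)$), the underlying set $Z=\{y_1,\ldots,y_r\}$ is reached by such a chain with $\ww_\h(Z)\ge c\,\ww_\h(A)$. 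As each $r$-set arises from at most $r!$ tuples, the family $\sss$ of these $Z$'s satisfies $|\sss|\ge(1-o(1))n^r/r!=(1-o(1))|\K|$.

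Finally, by $\mF$ all but $o(|\K|)$ of the sets $Z\in\K$ have $\ww_\h(Z)\le(1+o(1))\Phi(\h)/D$; since $|\sss|\ge(1-o(1))|\K|$, the family $\sss$ meets this ``typical'' collection, so some $Z$ enjoys both $\ww_\h(Z)\ge c\,\ww_\h(A)$ and $\ww_\h(Z)\le(1+o(1))\Phi(\h)/D$. Hence $\ww_\h(A)\le(1+o(1))c^{-1}\Phi(\h)/D=O(\overline\ww_\h(\h))$, which is $\mB$. The only real subtlety is the bookkeeping across the $r$ rounds: checking that every intermediate $Z_k$ still meets the hypothesis \eqref{WZ} of $\mC$, so that $\mC$ may legitimately be reapplied (fine, since each round costs only a constant factor, all absorbed into $e^{-o(n)}$), and controlling the exceptional vertices at each round so that $\sss$ really is a $(1-o(1))$-fraction of $\K$; the distinctness of the $y_k$'s is automatic, as $\mC$ quantifies over $y\in V\sm Z$.
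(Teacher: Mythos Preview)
Your proof is correct and follows essentially the same route as the paper. The paper packages your chain argument $Z_0,Z_1,\ldots,Z_r$ as an explicit intermediate property $\mD$ (namely, that \eqref{WZ} for $Z_0$ implies $\ww_\h(Z)\more\ww_\h(Z_0)D^{-r}\prod_{x\in Z_0}d_\h(x)$ for a.e.\ $Z\in\K$), shows $\mR\mC\Rightarrow\mD$ by exactly your induction, and then applies $\mD$ to the single $Z_0\in\K$ of maximum weight rather than to each $A\in\h$ separately; but the substance is the same.
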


The nonprobabilistic Lemma~\ref{LemmaE}, which is based mainly on the
material of Section~\ref{Ent}, allows us to replace $\mA\mR$ by $\mA\mR\mE$ in
Lemma~\ref{LemmaF}.  That lemma then embodies the idea that $\mE\ov{\mF}$ is unlikely because
the distribution of the $\ww_{\bH}(A)$'s ($A\in \bH$) should reflect
that of the $\ww_{\bH}(Z)$'s ($Z \in \K$).
We regard this natural point, and more particularly \eqref{Qbd} below,
as the heart of our argument; certainly it was the part
whose proof 
took longest to find.

Lemmas~\ref{LemmaE}-\ref{RCEFB}
are stated in the order in which they are used in proving
\eqref{Bi*},
but shown below in ascending order of difficulty and interest.
Thus we prove Lemma~\ref{LemmaC} in Section~\ref{PLC},
Lemma~\ref{LemmaE} in Section~\ref{PLE}, and
Lemma~\ref{LemmaF} in Section~\ref{PLF1}, with
the easy Lemma~\ref{RCEFB} proved here following the derivation of
\eqref{Bi*}.

\begin{proof}[Proof of \eqref{Bi*}]
We first observe that $\{\h\models\mR\}$ implies $\{\h-Z\models\mR\}$ for any $Z$
($\in \K$);
$\{\h\models\mA\}$ implies $ \{\h-Z\models\mA\}$ for any $Z$
as in \eqref{WZ}; and Lemma~\ref{LemmaF} also holds with $\bH-Z$ in place of
$\bH$ (for any $Z$).
The first of these was already noted in \eqref{robust} and the second is trivial,
so we just need the (routine) justification of the third:

With $\bh =|\bH-Z|$ and $m'=(1-\eps/2)m$, Theorem~\ref{T2.1} gives
\beq{bgm'}
\pr(\bh < m') =\exp[-\gO(\eps^2 m)]=n^{-\go(1)}.
\enq
So with $\mX=\mA\mR\ov{\mF}$, we have
\[
\pr(\bH-Z\models \mX) \leq
\pr(\bh< m') +\sum_{h\geq m'}\pr(\bh=h)
\pr(\bH-Z\models \mX|\bh=h),
\]
which is $n^{-\go(1)}$ by \eqref{bgm'} and application of Lemma~\ref{LemmaF}
to the summands.

\mn

We thus have, in addition to Lemma~\ref{LemmaF},
\[
\mbox{for $Z$ as in \eqref{WZ},}~~
\{\bH\models\mA\mR\}\Rastar  \{\bH-Z\models\mF\},
\]
which with Lemma~\ref{LemmaC} gives
\beq{GARGC}
\{\bH\models\mA\mR\}\Rastar \{\bH\models\mC\}
\enq
(since $\{\bH\models\mC\}=\{(\bH,Z,x)\models \eqref{wZxy} ~
\mbox{for all $Z$ as in \eqref{WZ} and $x\in Z$}\}$).
Finally, using Lemma~\ref{RCEFB} with Lemma~\ref{LemmaF} and \eqref{GARGC}
gives \eqref{Bi*} (in the form \eqref{B''}).\end{proof}

\begin{proof}[Proof of Lemma~\ref{RCEFB}.]
We need one more property (again, for a given $\h$):
\begin{itemize}

\item
[$\mD$:]  if $Z_0\in \K$ satisfies \eqref{WZ} then
\beq{Dbd}
\mbox{$\ww_\h(Z)\more \ww_\h(Z_0)D^{-r}\prod_{x\in Z_0}d_\h(x)~$ for a.e.\ $Z\in \K$.}
\enq
\end{itemize}
The next two assertions give Lemma~\ref{RCEFB}.
\beq{DDlemma}
\mbox{If $\h$ satisfies $\mR \mC$ then it satisfies $\mD$.}
\enq
\beq{RDEFB}
\mbox{If $\h$ satisfies $\mR \mD \mF$ then it satisfies $\mB$.}
\enq

For
\eqref{DDlemma} notice that if $\h$ satisfies $\mR \mC$
and $Z_0=\{x_1\dots x_r\}$ satisfies \eqref{WZ},
then induction on $i\in [r]$ shows that
for a.e.\ choice of distinct $y_1\dots y_r\in V\sm Z_0$ we have, with
$Z_i=(Z_{i-1}\sm x_i)\cup y_i$,
\beq{wZi}
\mbox{$\forall i ~~\ww_\h(Z_i)\more \ww_\h(Z_{i-1})d_\h(x_i)/D
\more \ww_\h(Z_0)D^{-i}\prod_{j\leq i}d_\h(x_j)$.}
\enq
(The only thing to observe here is that \eqref{wZi} for $Z_{i-1}$ (with \eqref{WZ} for $Z_0$)
implies \eqref{WZ} for $Z_i$, since $\gd_\h=\gO(D)$ (see \eqref{Rg2})
implies that the r.h.s.\ of
\eqref{wZi} is $\gO(\ww_\h(Z_0))$.)
This gives $\mD$, since it implies that a.e.\ $Z\in \K$ is $Z_r$ for some
$y_1\dots y_r$
supporting \eqref{wZi}.


For \eqref{RDEFB} choose $Z_0\in \K$ with $\ww_\h(Z_0)$ maximum and note $Z_0$
satisfies \eqref{WZ} (since $\ww_\h(Z_0)$ is at least the l.h.s.\ of
\eqref{wgavg}).
Thus
$\mD$ (and $\gd_\h=\gO(D)$) give $\ww_\h(Z)=\gO(\ww_\h(Z_0))$ for a.e.\ $Z\in \K$,
which with $\mF$ implies
$\Phi(\h)/D=\gO(\ww_\h(Z_0))$.  But this gives $\mB$, since
$\overline \ww_\h(\h)=\Phi(\h)/D$
(again see \eqref{wgavg}) and $\max_\h(\h)\leq\ww_\h(Z_0)$.
\end{proof}

\section{Proof of Lemma~\ref{LemmaC}}\label{PLC}

We now use $D$ for $D_{\bH}$ (with $\bH=\bH_{n,m}$) and set
$\bG=\bH-Z$, $Y=Z\sm x$ and $W=V\sm Z$.
Notice to begin that, for any $y\in W$,
\beq{WHYz}
\ww_{\bH} (Y\cup y) = \sum\{\ww_{\bG}(S\cup y):S\in \Cc{W\sm y}{r-1}, ~S\cup x\in \bH\}.
\enq

Let $\bH'=\{A\in \bH:A\cap Z=\{x\}\}$ and $\bH''=\bH\sm \bH'$.
We think of choosing first $\bH''$
(which determines $\bG$) and then $\bH'$.
If $\bH\models \mR$
then (using \eqref{Rg2} for \eqref{DGD} and \eqref{Rg2}-\eqref{Rg3} for \eqref{d'x})
\beq{DGD}
D_{\bG}\sim D,
\enq
\beq{d'x}
d'(x):= |\bH'| ~( =|\{S\in \Cc{W}{r-1}: S\cup x\in \bH\}|)  \sim d_{\bH}(x)=\gO(D),
\enq
and (in view of \eqref{DGD})
$\mF$ for $\bG$ is
\beq{WGZ*}
\mbox{$\ww_{\bG}(U)\sim \Phi' := \Phi(\bG)/D~ ~$
for a.e.\ $~U\in \C{W}{r}$.}
\enq
Thus Lemma~\ref{LemmaC} will follow from
\beq{LCagain}
\pr((\bH,Z,x)\models \eqref{wZxy}|\{\bG\models \eqref{WGZ*}\}\wedge \{d'(x)\models \eqref{d'x}\}) =1-n^{-\go(1)}.
\enq
So we assume $\bH''$ has been chosen so that the conditioning event holds
(note this \emph{is} decided by $\bH''$), and proceed to choosing
$\bH'$.

From \eqref{WGZ*} we have
\beq{WGz}
\mbox{for a.e. $y\in W$, $~\ww_{\bG}(S\cup y)\sim \Phi' ~$ for a.e.\ $~S\in \Cc{W\sm y}{r-1}$;}
\enq
so for \eqref{LCagain} it is enough to show
that if $y$ is as in \eqref{WGz} then the inequality in \eqref{wZxy} holds with probability
$1-n^{-\go(1)}$.  But for such a $y$,
Theorem~\ref{Cher'} (using \eqref{d'x} and $D=\gO(\log n)$)
says that with probability $1-n^{-\go(1)}$,
$\ww_{\bG}(S\cup y)\sim \Phi' $ for all but $o(d'(x))$ of the $S$'s in
\eqref{WHYz}; and whenever this is true we have (as desired)
\[
\ww_{\bH}(Y\cup y) \more \Phi' d'(x) ~~(\sim\ww_{\bH}(Z)d_{\bH}(x)/D).
\]

\section{Proof of Lemma~\ref{LemmaE}}\label{PLE}

Here $\h$ is a general $m$-edge ($n$-vertex) $r$-graph satisfying $\mA\mR$.
We again use $D$ for $D_\h$.

\mn

Setting $p = \left(\C{n}{r}-t\right)/\C{n}{r}$ ($=m/\C{n}{r}$),
and using \eqref{mg0r} and \eqref{Em},
we may rewrite the lower bound in \eqref{Ag} as
\[
\frac{r-1}{r}n \log n  -\gL -\frac{n }{r}\log [(r-1)!]
+ \frac{n }{r} \log p -o(n ),
\]
while
\[
\log D =(r-1)\log n  -\log[(r-1)!] + \log p +o(1).
\]
Thus
$\mA$ for $\h$ says
\beq{logPhi1}
\log \Phi(\h) ~>~ \frac{n }{r} \log D-\gL -o(n )
~>~\frac{1}{r}\sum \log  d(v) -\gL -o(n ),
\enq
the second inequality following from $\sum\log d(v) \leq n \log(\sum d(v)/n )$.
(Here and in the rest of this argument, $v$ runs over vertices and $d(v)$ is $d_\h(v)$.)

On the other hand,
we claim that
\beq{logPhi2}
\log \Phi(\h) <\frac{1}{r} \sum h(v,\h) -\gL +o(n ).
\enq
(Recall from the third paragraph of Section~\ref{Ent}
that $h(v,\h)$ is the entropy of
the edge containing $v$ in a uniform p.m.\ of $\h$.)

\begin{proof}[Proof of \eqref{logPhi2}]
This will follow from Theorem~\ref{TCuckler},
applied with $\ff$ a uniform p.m. of $\h$ (so $H(\ff(v))=h(v,\h)$),
once we show
\beq{on2}
\mbox{$\sum_v\sum_Y p_v(Y)\gc_v(Y)^{1/(r-1)}=o(n ).$}
\enq
(Recall from the passage preceding Theorem~\ref{TCuckler} that $p_v(Y)=\pr(\ff(v)=Y)$
and $\gc_v(Y)$ is the probability that fewer than $r$ edges of $\ff$ meet $Y\cup v$.
Of course here, for a given $v$, the only relevant $Y$'s are those with $Y\cup v\in \h$,
and we restrict to these in the following discussion.)

For \eqref{on2} we apply Lemma~\ref{TL1} with $h(x)=x^{1/(r-1)}$,
$i$ running over pairs $(v,Y)$, and, for $i=(v,Y)$, $p_i=p_v(Y)$ and
$\gc_i=\gc_v(Y)$; thus $l=\sum d(v)$ ($=nD$)
and $\gD=D$.
Then \eqref{on2} becomes \eqref{gz5}, so we need
\eqref{gz1}-\eqref{X}.
The first two of these are immediate and the third follows
from \eqref{logPhi1} \emph{via} Lemma~\ref{Shearer}:
\begin{eqnarray*}
\sum p_i\log(1/p_i)&=& \sum \sum p_v(Y)\log (1/p_v(Y)) \\
&=&\sum h(v,\h) ~\geq ~r\log\Phi(\h)~> ~n \log \gD-O(n )
\end{eqnarray*}
(with the first inequality given by Lemma~\ref{Shearer} and the second by the first
part of \eqref{logPhi1}).

For \eqref{X} we use \eqref{Rg3}: with $\kappa$ ($=o(D)$)
the largest codegree in $\h$
(and $\gG_v(Y)$ as in \eqref{gGvY}), we have
\[
\mbox{$\sum\sum \gc_v(Y) = \sum_f\pr(\ff=f) |\{(v,Y):f\in \gG_v(Y)\}|
\leq \frac{n }{r}\C{r}{2}\kappa r =o(nD )$}
\]
(where the third expression bounds each of the cardinalities in the preceding sum,
since $f\in \gG_v(Y)$ iff the edge $Y\cup v$ meets some member
of $f$ more than once).

(For our random $\h$---as opposed to one just assumed to satisfy $\mA$ and $\mR$---this last
bit is particularly crude since most codegrees will be \emph{much} smaller than $\kappa$.)

\end{proof}

Now combining \eqref{logPhi1} and \eqref{logPhi2} we have
\[
\sum h(v,\h) > \sum \log d(v) -o(n ),
\]
implying (note $h(v,\h)\leq \log d(v)$ is trivial)
\beq{aay}
\mbox{$h(v,\h) > \log d(v)-o(1)~$ for a.e.\ $v$.}
\enq
But
Lemma~\ref{TL2} says that for any $v$ as in \eqref{aay}
there is a set of $(1-o(1))d(v)$ edges $A$ at $v$
with $\ww_\h(A) = (1\pm o(1))\Phi(\h)/d(v)$
(note $p_v(Y)=\ww_\h(Y\cup v)/\Phi(\h)$),
and combining this with \eqref{Rg1} gives $\mE$.

\section{Proof of Lemma~\ref{LemmaF}}\label{PLF1}

We again use $\h$ for a general $m$-edge $r$-graph,
$\bH=\bH_{n,m}$ and $D = mr/n$ ($=D_\h=D_{\bH}$).

\mn

By Lemma~\ref{LemmaE}, Lemma~\ref{LemmaF} is the same as
\beq{L7.2''}
\{\bH\models \mA\mR\mE\} \Rastar \{\bH\models \mF\}.
\enq
Note that, in view of this, we may assume
\beq{sillyp}
m = |\K|-\gO(|\K|),
\enq
since otherwise
$\mE$ and $\mF$ are equivalent and \eqref{L7.2''} is vacuous.
(This rather silly point will be needed for \eqref{Ubd}.)

It will be convenient to further reformulate as follows.
For any $\h$ set
\[
\ga(\h) =\inf\{\ga:|\{U\in \K: \ww_\h(U)\neq (1\pm \ga)\Phi(\h)/D_\h\}|< \ga |\K|\}.
\]
Then $\{\h\models \mF\} = \{\ga(\h)=o(1)\}$ and \eqref{L7.2''} is equivalent
to\footnote{With $\mG=\{\bH\models \mA\mR\mE\} $ and $\mH(\nu) =\{\ga(\bH)> \nu\}$,
\eqref{L7.2''} says
\[
\mbox{there is $\vs=o(1)$ such that $ \pr(\mG\wedge\mH(\vs))=n^{-\go(1)}$,}
\]
while \eqref{L7.2'}  implies
\[
\forall k,  ~ \pr(\mG\wedge\mH(1/k))<n^{-k}~~\mbox{for $n\geq n_k$};
\]
and we get the former from the latter by taking $\vs(n)  =(\max\{k:n_k\leq n\})^{-1}$.}
\beq{L7.2'}
\mbox{for any fixed $\theta>0$, $~\pr(\{\bH\models \mA\mR\mE\}\wedge \{\ga(\bH)>2\theta\}) =n^{-\go(1)}$.}
\enq
(The $2\theta$ will be convenient below.)
So for the rest of this section we fix $\theta>0$ and aim for \eqref{L7.2'}.


Set
\[
\Phi'=\Phi(\bH)/D.
\]
Notice that $\{\bH\models \mE\}\wedge \{\ga(\bH)>2\theta\}$
implies
\begin{itemize}
\item[$\mQ$:]
\emph{$\ww_{\bH}(A)\sim \Phi'$ for a.e.\ $A\in  \bH$, but
$\ww_{\bH}(\uu )\neq (1\pm 2\theta) \Phi'$ for
at least a $(2\theta)$-fraction of the $\uu $'s in $ \K\sm\bH$.}

\end{itemize}
So for \eqref{L7.2'} it is enough to show
\beq{Qbd}
\pr(\bH\models\mA\mR\mQ) <n^{-\go(1)}.
\enq

For the proof of this we work with
an auxiliary random set $\bT$
chosen uniformly from $\C{\bH}{\tau}$, where $\tau$, which will be specified later
(see the paragraph containing \eqref{gznu}-\eqref{param4}), will at least satisfy
\beq{tau}
\log n \ll \tau \ll \log^2n.
\enq

\nin
We take $\bF=\bH\sm \bT$ and
\[
\gz= e^{-\tau/D},
\]
and will be interested in a property of the pair $(\bH,\bT)$ (or $(\bF,\bT)$),
\emph{viz.}
\begin{itemize}
\item[$\mV$:]
\emph{$\ww_{\bF}(A)\sim \gz \Phi'$ for a.e.\ $A\in   \bT$, but
$\ww_{\bF}(\uu )\neq (1\pm \theta) \gz \Phi'$ for
at least a $\theta$-fraction of the $\uu $'s in $ \K\sm\bH$.}
\end{itemize}
(Note
$\gz \ww_{\bH}(\uu )$ is a natural
asymptotic value for $\ww_{\bF}(\uu )$ since
each p.m.\ of $\bH-U$ survives in $\bF$ with probability
about $(1-\tau/m)^{n/r-1}\sim \gz$; \emph{cf.} \eqref{1-vs}.)

\mn

Here we exploit the familiar leverage derived from the interplay
of two natural ways of generating the pair $(\bH,\bT)$:

\begin{itemize}
\item[(A)]
choose $\bH$ and then $\bT$
(as above);

\item[(B)]
choose $\bF $ and then $\bT$ (determining $\bH=\bF\cup\bT$).

\end{itemize}

Now writing simply $\mA$ for $\{\bH\models \mA\}$ and similarly for $\mR$ and $\mQ$,
and $\mV$ for $\{(\bH,\bT)\models \mV\}$, we will show
\beq{U|Q}
\pr(\mV|\mA\mR\mQ) > 1-o(1)
\enq
and
\beq{Ubd}
\pr(\mV) = n ^{-\go(1)}.
\enq
These give \eqref{Qbd}, since
\[
\pr(\mA\mR\mQ) =\pr(\mA\mR\mQ\mV)/\pr(\mV|\mA\mR\mQ)\leq
\pr(\mV)/\pr(\mV|\mA\mR\mQ).
\]
(So \eqref{U|Q} is more than is needed here.)
We first dispose of the easier \eqref{Ubd}.

\begin{proof}[Proof of \eqref{Ubd}]
Here we use viewpoint (B).  The (natural) idea is:  $\bF$ determines the
weights $\ww_{\bF}(\uu )$ (for all $U\in \K$, though here we are only interested in $U\in \K\sm \bF$),
and $\mV$ then requires that $\bT$ be (pathologically) drawn almost entirely from $U$'s with weights
close to $\gz \Phi'$, though this group excludes an $\gO(1)$-fraction of $\K\sm \bF$.

A small complication is that $\bF$ doesn't determine $\Phi'$.
Among several ways of dealing with this, the following seems nicest.
Given $\bF$,
let $\uu _1,\ldots$ be an ordering of $\K\sm \bF$ with
$\ww_{\bF}(\uu _1)\leq \ww_{\bF}(\uu _2)\leq \cdots$,
and let $\Y$ and $\ZZZ$ be (resp.) the first and last $\theta|\K\sm \bF|/3$ of the $\uu _i$'s.
Then, \emph{whatever} $\Phi'$ turns out to be, the second part of $\mV$ requires that
at least one of $\Y$, $\ZZZ$ be contained in
\[
\W:=\{\uu : \ww_{\bF}(\uu ) \neq (1\pm \theta)\gz \Phi'\}
\]
(or \eqref{sillyp}, with $\tau\ll m$, implies
$|\W\sm\bH|< |\Y|+|\Z| < 2\theta(|\K\sm \bH|+\tau)/3<
\theta |\K\sm \bH|$).
But if this is true then the first part of $\mV$ requires that (say)
\beq{minYZ}
\min\{|\bT\cap \Y|,|\bT\cap \ZZZ|\}< \theta\tau/4;
\enq
and, since
\[
\E |\bT\cap \Y|=\E|\bT\cap \ZZZ| =\theta\tau/3
\]
(and $\theta$ is fixed), Theorem~\ref{T2.1} bounds the probability of \eqref{minYZ}
by
$e^{-\gO(\tau)}$, which is $n^{-\go(1)} $ by \eqref{tau}.\end{proof}

\begin{proof}[Proof of \eqref{U|Q}]

We now need to pay some attention to parameters.
We first observe that if $\h\models\mA\mR$, then there is
$\gc=o(1)$ (depending on the $o(n)$ in $\mA$ and, in $\mR$,
the (explicit or implicit)
$o(\cdot)$'s in \eqref{Rg1} and \eqref{Rg3},
and the implied constants in \eqref{Rg2}),
such that for each $U\in \K$ with (say)
\beq{Phi*bd}
(\ww_\h(U)=) ~~\Phi(\h-U)> \Phi(\h)n^{-r},
\enq
$\h^*:=\h-U$ and $\Phi^*:=\Phi(\h^*)$
satisfy
\beq{heavysum}
\sum\{\ww_{\h^*}(A):A\in \h^*, \ww_{\h^*}(A)\neq (1\pm \gc)\Phi^*/D\}< \gc n\Phi^*.
\enq
To see this, notice that each relevant $\h^*$ satisfies $\mA\mR$
(see \eqref{robust} for $\mR$),
so also $\mE$ by Lemma~\ref{LemmaE}.
But then $\h^*$ contains $(1-o(1))|\h^*|\sim nD/r$ edges of $\ww_{\h^*}$-weight
$(1\pm o(1))\Phi^*/D_{\h^*}\sim \Phi^*/D$,
with (both) asymptotics following easily from $\h\models\mR$
(see \eqref{Rg2});
so such edges account for all but a $o(1)$-fraction of the total weight $\Phi^*(n-r)/r\sim \Phi^*n/r$.
This gives \eqref{heavysum} for a suitable $\gc=o(1)$.

\mn

We now choose $\tau =\nu \log n$---noting that then
\beq{gznu}
\gz ~~(=e^{-\tau/D}) ~ > e^{-\nu}
\enq
(since $D> \log n$)---together with $M$ and $\eta$,
satisfying
\beq{param1}
\log n\gg\nu\gg 1
\enq
(which is \eqref{tau});
\beq{param2}
e^{-\nu}\gg \gc;
\enq
\beq{param3}
\tau\gg M\left\{\begin{array}{ll}
\gg \gc \tau,\\
> 1+\gc;
\end{array}\right.
\enq
and
\beq{param4}
e^{-\nu}\gg \eta\gg \sqrt{\tau M}/\log n.
\enq
Note this is possible:
we may choose $\nu\ra\infty$ as slowly as we like (which in particular gives
\eqref{param1} and \eqref{param2}); we then want to choose $M$ as in \eqref{param3} satisfying (to
leave room for $\eta$)
$
e^{-\nu}\gg \sqrt{\tau M}/\log n;
$
and this is possible if $e^{-\nu}\gg \max\{\nu\sqrt{\gc},\sqrt{\nu/\log n}\}$,
which is true for a slow enough $\nu$.

\mn

For the proof of \eqref{U|Q} we use viewpoint (A) (choose $\bH$, then $\bT$).
We assume we have chosen $\bH=\h$ satisfying $\mA\mR\mQ$; so $\pr$ now
refers just to the choice of $\bT$, and \eqref{U|Q} will follow from
\beq{G,T}
\pr((\h,\bT)\models \mV) = 1-o(1).
\enq
It will be enough to show that for $\uu \in \K$ as in \eqref{Phi*bd}
(i.e.\
$\ww_\h(\uu )> \Phi(\h)n^{-r}$),
\begin{eqnarray}
\pr(\ww_{\bF}(\uu ) \sim \gz\ww_\h(\uu )) = 1-o(1)&\mbox{if $U\in \K\sm \h$,}
\label{WGTZ1}\\
\pr(\ww_{\bF}(U ) \sim \gz\ww_\h(U )|U\in \bT) = 1-o(1)&\mbox{if $U\in \h$.}
\label{WGTZ2}
\end{eqnarray}

\mn

Before proving this we show that it does give \eqref{G,T}.
If $\h$ satisfies $\mQ$ then for a suitable $\vs=o(1)$,
\beq{GO}
|\{A\in \h:\ww_\h(A)\neq (1\pm \vs)\Phi'\}| \ll |\h|.
\enq
Thus, with $\h^0$ the set in \eqref{GO}, we have
$
\E |\bT\cap\h^0|=\tau |\h^0|/|\h|\ll \tau,
$
so
\[
\mbox{$|\bT\cap \h^0|\ll\tau~$  w.h.p.}
\]
(by Theorem~\ref{T2.1} or just Markov's Inequality).
But for the first part of $\mV$ to fail we must have either
$|\bT\cap\h^0|=\gO(\tau)$,
which we have just said occurs with probability $o(1)$, or
\[   
|\{A\in \bT\sm\h^0: \ww_{\bF}(A)\not\sim\gz\ww_\h(A)\}|=\gO(\tau),
\]   
which has probability $o(1)$ by \eqref{WGTZ2} (and Markov).

Similarly, failure of the second part of $\mV$ implies
\beq{WZLarge}
\ww_{\bF}(U) = (1\pm \theta)\gz\Phi'
  \not\sim \gz\ww_\h(U)
\enq
for at least $\theta|\K\sm \h|$ of those
$\uu $'s in
the second part of $\mQ$ that satisfy
\beq{WZlarge}
\ww_\h(\uu )> (1-\theta)\gz \Phi' > n^{-o(1)}\Phi(\h)/D
\enq
(since those failing \eqref{WZlarge} \emph{cannot} satisfy \eqref{WZLarge};
for the second bound in \eqref{WZlarge} see \eqref{gznu} and \eqref{param1}).
But since the bound in \eqref{WZlarge} is larger than the one in \eqref{Phi*bd},
\eqref{WGTZ1} implies that the probability that \eqref{WZLarge}
holds for such a set of $U$'s is $o(1)$.

\mn

Finally, we prove \eqref{WGTZ1}; the proof of \eqref{WGTZ2} is almost
literally the same and is omitted.
(Note the probability in \eqref{WGTZ2} is just
$\pr(\ww_{\h\sm \bT_0}(U ) \sim \gz\ww_\h(U ))$, with $\bT_0$ uniform from
$\C{\h\sm\{U\}}{\tau-1}$.)

\begin{proof}[Proof of \eqref{WGTZ1}.]

We now fix $U $ as in \eqref{Phi*bd} (and recall $\h^*=\h-\uu $ and
$\Phi^*=\Phi(\h^*)$).

\mn

Say $A\in\h$ is \emph{heavy} if $A\in\h^*$ and $\ww_{\h^*}(A)> M\Phi^*/D$, and note that by
\eqref{heavysum} (and $M> 1+\gc$ from \eqref{param3}),
\beq{fewheavies}
\mbox{the number of heavy edges in $\h$ is less than $\gc nD/M=\gc mr/M$,}
\enq
implying
\beq{Theavy}
\pr(\mbox{$\bT$ contains a heavy edge}) < \gc \tau r/M =o(1)
\enq
(see \eqref{param3}).  So it is enough to show \eqref{WGTZ1} conditioned on
\beq{noheavy}
\{\mbox{$\bT$ contains no heavy edges}\}.
\enq
We will instead show a slight variant, replacing
$\bT$ by $\bT'=\{A_1\dots A_\tau\}$, with the $A_i$'s chosen uniformly and \emph{independently}
from the non-heavy edges of $\h$; thus:
\beq{WGTZ'}
\pr(\ww_{\h\sm\bT'}(\uu ) \sim \gz \ww_\h(\uu )) =1-o(1).
\enq
Of course this suffices:  we may couple $\bT$ (conditioned on \eqref{noheavy})
and $\bT'$ so they agree whenever the
edges of $\bT'$ are distinct, which occurs w.h.p.\ (more precisely, with probability at least
$1-\tau^2/m$), and the probability in \eqref{WGTZ1} is then at least
the probability in \eqref{WGTZ'} minus $\pr(\bT'\neq\bT)$.

\mn

For the proof of \eqref{WGTZ'}, let
\[
X= X(A_1\dots A_\tau)  =\Phi(\h^*\sm\{A_1\dots A_\tau\})= \ww_{\h\sm \bT'}(U).
\]
Since $\eta\ll \gz $ (see \eqref{gznu} and
\eqref{param4}), \eqref{WGTZ'} will follow from
(recall $\ww_\h(U)=\Phi^*$)
\beq{EPhiX}
\E X\sim \gz \Phi^*
\enq
and
\beq{prXE}
\pr(|X-\E X|>\eta \Phi^*)=o(1).
\enq
\begin{proof}[Proof of \eqref{EPhiX}]
Let $M_i$ run through the p.m.s of $\h^*$ and let $x_i$ be the number of
heavy edges in $M_i$.
Then with $m'$ the number of non-heavy edges in $\h$, we have
\[
\mbox{$\E X = \sum_i (1-(n/r-1-x_i)/m')^\tau$}
\]
and, by \eqref{heavysum},
\beq{sumxi'}
\sum x_i =\sum\{\ww_{\h^*}(A):\mbox{$A\in \h^*$, $A$ heavy}\}<\gc n\Phi^*.
\enq
These imply, with $\varrho =(n/r-1)/m'$,
\begin{eqnarray}
(1-\varrho)^\tau \Phi^*&\leq & \mbox{$\E X
~<~ \sum_i e^{-(\varrho-x_i/m') \tau}$}\nonumber\\
&< &\left[e^{-\varrho\tau}+ \gc rn/(n-r)\right]\Phi^*.\label{EXsum}
\end{eqnarray}
Here the last inequality follows from \eqref{sumxi'} and convexity of the
exponential function, which imply that the sum in \eqref{EXsum} is at most what it would be
with $\frac{\gc n\Phi^*}{n/r-1} =\frac{\gc rn\Phi^* }{n-r}$ of the $x_i$'s equal to $n/r-1$
and the rest (the number of which we just bound by $\Phi^*$) equal to zero.

In view of \eqref{EXsum}, \eqref{EPhiX} will follow from
\beq{1-vs}
(1-\varrho)^\tau\sim e^{-\varrho\tau}\sim e^{-\tau/D} ~~(=\gz)
\enq
(and $\gc \ll \gz$, which is given by \eqref{gznu} and \eqref{param2}).
For the two parts of \eqref{1-vs} we need (resp.) $\varrho^2\ll 1/\tau$ and
$|\varrho-1/D|\ll 1/\tau$.  The first of these follows from \eqref{fewheavies}
(which gives $m'\sim m$, though here $m'=\gO(m)$ would suffice)
and \eqref{tau}.
For the second, now using \eqref{fewheavies} more precisely
 (and recalling $D=mr/n$), we have
\[
\left| \frac{n/r-1}{m'}-\frac{n/r}{m}\right|
\leq \frac{1}{m'}+\frac{n}{r}~\frac{m-m'}{mm'} < \frac{1}{m'}+\frac{n}{r}\frac{\gc r}{Mm'}\ll \frac{1}{\tau},
\]
with the last inequality a (weak) consequence of \eqref{param3}.
\end{proof}

\begin{proof}[Proof of \eqref{prXE}]

We
consider the (Doob) martingale
\beq{Doob}
X_i=X_i(A_1\dots A_i)=\E[X|A_1\dots A_i]
~~~(i=0\dots \tau),
\enq
with difference sequence $Z_i=X_i-X_{i-1}$
($i\in [\tau]$)
and $Z =\sum Z_i$ ($=X-\E X$).
For the next little bit we use $\E_S$ for expectation
with respect to
$(A_i:i\in S)$.

Given $A_1\dots A_{i-1}$ we may express
\beq{ZW}
Z_i=\E W-W,
\enq
where $\E$ refers to $A$ chosen uniformly from the non-heavy edges of $\h$ and
\[
W(A) =\E_{[i+1,\tau]}\Phi(\h^*\sm\{A_1\dots A_{i-1},A_{i+1}\dots A_\tau\})~~~~~~~~~~~~~~~~~~~
\]
\beq{W(A)}
~~~~~~~~~~~~~~~~~~~~~~ -\E_{[i+1,\tau]}\Phi(\h^*\sm\{A_1\dots A_{i-1},A,A_{i+1}\dots A_\tau\}).
\enq

\mn
For \eqref{ZW} just notice that
\[
X_i(A_1\dots A_{i-1},A) =\E_{[i+1,\tau]}\Phi(\h^*\sm\{A_1\dots A_{i-1},A,A_{i+1}\dots A_\tau\}),
\]
while
\[
X_{i-1}(A_1\dots A_{i-1}) =\E_{[i,\tau]}\Phi(\h^*\sm\{A_1\dots A_{i-1},A_i,A_{i+1}\dots A_\tau\}).
\]
(The first term on the r.h.s.\ of \eqref{W(A)}, which is chosen to give \eqref{WAWA},
doesn't depend on $A$ so doesn't affect \eqref{ZW}.)

We also have
\beq{WAWA}
0\leq W(A) \leq ~\ww_{\h^*}(A),
\enq
since these bounds hold even if we remove the $\E$'s in \eqref{W(A)}.
Thus $W$ satisfies the conditions in Proposition~\ref{EeZ} with $b=M\Phi^*/D$ and $a=\Phi^*/D$
(the latter since
$|\h|^{-1}\sum_{A\in \h}\ww_{\h^*}(A) =|\h|^{-1}\Phi^*(n/r-1) <\Phi^*/D$---note $\ww_{\h^*}(A):=0$
if $A\not\in \h^*$---and averaging instead only over non-heavy edges can only decrease this).
So for any
\beq{gzbds}
\vt \in [0,(2b)^{-1}],
\enq
we may apply Lemma~\ref{Azish} to each of $Z$, $-Z$, using Proposition~\ref{EeZ}
(with \eqref{ZW}) to bound the factors in
\eqref{EeeZ}, yielding
\[
\max\{\E e^{\vt Z},\E e^{-\vt Z}\}\leq e^{\tau \vt^2 ab} = \exp[\tau\vt^2 M(\Phi^*/D)^2]
\]
and, for any $\gl>0$,
\beq{maxprZ}
\max\{\pr(Z>\gl),\pr(Z<-\gl)\} < \exp[\tau\vt^2 M(\Phi^*/D)^2-\vt \gl].
\enq

For \eqref{prXE} we use \eqref{maxprZ} with $\gl =\eta \Phi^*$ and
\beq{gzvals}
\vt =\min\left\{\tfrac{\eta\Phi^*}{2\tau M(\Phi^*/D)^2}, \tfrac{D}{2M\Phi^*}\right\}
=\tfrac{D}{2M\Phi^*}\min \left\{ \tfrac{\eta D}{\tau},1\right\}
\enq
(the first value in ``min"
minimizes the r.h.s.\ of \eqref{maxprZ}
and the second enforces \eqref{gzbds}), and should show that the exponent in \eqref{maxprZ}
is then $-\go(1)$.

Suppose first that $\eta D\leq \tau$, so $\vt$ takes the first value(s) in \eqref{gzvals}.
Then the negative of the exponent in \eqref{maxprZ} is (using \eqref{param4} and $D\geq \log n$)
\[
\frac{(\eta\Phi^*)^2}{4\tau M(\Phi^*/D)^2} = \frac{\eta^2D^2}{4\tau M}
=\go( 1).
\]
If instead $\eta D>\tau$,
then $\vt = D/(2M\Phi^*)$ and the exponent in \eqref{maxprZ} is
\[
\frac{D^2}{(2M\Phi^*)^2}\tau M\left(\frac{\Phi^*}{D}\right)^2 -\frac{D}{2M\Phi^*}\eta \Phi^*
=
\frac{\tau}{4M}-\frac{\eta D}{2M}
< -\frac{\tau}{4M} = -\go(1),
\]
where we used the assumed $\eta D>\tau$ and, from \eqref{param3}, $\tau\gg M$.
\end{proof}\end{proof}\end{proof}

\section{Reduction}\label{Reduction}

In this section we derive Theorem~\ref{ThmY'} from the following statement, which
will be proved in \cite{HT}.
\begin{thm}\label{ThmZ}
Fix a small positive $\eps$ and suppose $\gd_x \sim \eps\log n$
for each $x\in W :=[n]$.
Let $M\sim (n/r)\log n$ and let $\bH^*$ be distributed as
$\bH_{n,M}$ conditioned on
\beq{firstL}
\{d_{\bH}(x)\geq \gd_x ~\forall x\in W\}.
\enq
Then w.h.p.
\[
\Phi(\bH^*)> \left[e^{-(r-1)}\log n\right]^{n/r}e^{-o(n)}.
\]
\end{thm}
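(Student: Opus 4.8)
The plan for Theorem~\ref{ThmZ} is to re-run, inside the conditional space, essentially the whole apparatus of Sections~\ref{Skeleton}--\ref{PLF1}: expose the edges of $\bH^*$ along a random ordering, track $\log\Phi$ by way of the telescoping \eqref{mgt1}, control the increments through conditional analogues of the events $\mR$ and $\mB$, and conclude exactly as in the derivation of Theorem~\ref{ThmX'} --- the one change in hypotheses, $M\sim(n/r)\log n$ in place of $M>(1+\eps)(n/r)\log n$, being compensated by the conditioning on \eqref{firstL}. It should be stressed that there is no shortcut \emph{via} the unconditional result: for $M\sim(n/r)\log n$ the typical $\bH_{n,M}$ already has about $n^{1-c}$ vertices of degree below $\gd_x$ (here $c=c(\eps)=1-\eps+\eps\log\eps\in(0,1)$, with $c\to1$ as $\eps\to0$), so \eqref{firstL} is a genuine large-deviation event, of probability roughly $\exp(-n^{1-c})$, and conditioning on it changes the law nontrivially. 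What does help is that the conditioning is one-sided --- it only forces degrees up --- so the ``hard'' half of $\mR$, the bound $\gd_\h=\gO(D_\h)$ in \eqref{Rg2}, holds for $\bH^*$ for free (with implied constant $\sim\eps$), since $\gd_{\bH^*}\geq\min_x\gd_x\sim\eps\log n$ while $D_{\bH^*}=rM/n\sim\log n$; this is exactly the feature for which $M>(1+\eps)(n/r)\log n$ was needed in Theorem~\ref{ThmX'}. The upper-tail parts of $\mR$ --- the bound $\gD_\h=O(D_\h)$ and the codegree bound \eqref{Rg3} --- and the assertion that a.a.\ degrees of $\bH^*$ are still $\sim D$ ought to survive the conditioning, since forcing small degrees up cannot create large degrees or large codegrees and should not move the bulk of the degree distribution; so a conditional form of \eqref{Ri} should go through, though re-proving it is one of the points needing care.

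Concretely I would generate $\bH^*$ in two stages: a first, small batch of $n^{1-c+o(1)}=o(n)$ edges, revealed with an appropriate tilt, whose only role is to certify \eqref{firstL} for the (few) vertices that would otherwise fall short; and a second batch of $M-o(n)$ edges whose conditional law, given the first, is --- up to a $o(\cdot)$-negligible correction from the forbidden first-batch edges and from the handful of vertices pinned near threshold --- that of an unconditional random $r$-graph on the remaining vertex/edge budget. One then exposes the second batch edge by edge as in Section~\ref{Skeleton}: on the relevant good events the increments satisfy $\xi_i=O(\gc_i)$ (by the conditional analogue of \eqref{proofofxibd}), the concentration estimate \eqref{conc} follows verbatim from Lemma~\ref{Azish} and Proposition~\ref{EeZ}, and combining these with \eqref{mg0r}, \eqref{Em} and Stirling reduces the theorem to the conditional analogue of \eqref{Bi*}, namely $\pr(\bH^*\models\mA\mR\ov{\mB})=n^{-\go(1)}$.

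For that last point one re-traverses Lemmas~\ref{LemmaE}--\ref{RCEFB} together with the implications \eqref{DDlemma} and \eqref{RDEFB}. Lemma~\ref{LemmaE} is non-probabilistic --- it uses only $\mA$, $\mR$ and Theorem~\ref{TCuckler} --- so it transfers once one checks that the $o(n)$ ``needy'' vertices (a $o(1)$-fraction, each of degree still $\gO(\log n)$) do not spoil the entropy bound \eqref{logPhi2}, i.e.\ that they contribute acceptably to the error sum \eqref{on2} and to the codegree input \eqref{X}. Lemmas~\ref{LemmaC} and \ref{RCEFB} and the implications \eqref{DDlemma}, \eqref{RDEFB} are counting arguments built on $\mR$ and adapt routinely. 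The real work --- the step I expect to be the main obstacle --- is the conditional version of Lemma~\ref{LemmaF}, and inside it the estimates behind \eqref{Qbd}, i.e.\ \eqref{U|Q} and \eqref{Ubd}. Under viewpoint (A) of Section~\ref{PLF1} (choose $\bH=\bH^*$, then $\bT$) the auxiliary set $\bT$ is still a uniform $\tau$-subset of the edges, with no new constraint, since $\bF=\bH^*\sm\bT$ need not itself satisfy \eqref{firstL}; so \eqref{U|Q} and its parameter juggling should survive with little change. The delicate point is viewpoint (B), used for \eqref{Ubd}: conditioning tilts the marginal of $\bF$ toward ``internally valid'' sets and makes $\bT$, given $\bF$, uniform only over those $\tau$-subsets that \emph{validly complete} $\bF$, while (since $\tau\approx\nu\log n$ with $\nu\to\infty$) deleting $\bT$ from $\bH^*$ typically drops $O(\nu)$ vertices just below threshold, so $\bF$ lives in a slightly different space. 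Because $\nu\ll\log n\ll n$ both effects are lower order and I expect they can be absorbed --- e.g.\ by conditioning additionally on the $O(\nu)$ at-threshold vertices of $\bF$ and union-bounding, after which $\bT$ is again essentially unconstrained and the $e^{-\gO(\tau)}$ bound behind \eqref{Ubd} survives --- but pushing the conditioning through this ``heart of the argument'' while keeping every error term within the $e^{-o(n)}$ slack of the target is exactly where the ``longer story'' lives.
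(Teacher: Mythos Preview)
The paper does \emph{not} prove Theorem~\ref{ThmZ}: it is explicitly deferred to the companion paper \cite{HT} (see the first sentence of Section~\ref{Reduction} and the last paragraph of the abstract), and is used here only as a black box in the reduction of Theorem~\ref{ThmY'}.  So there is no proof in this paper to compare your proposal against.

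That said, your outline is broadly consistent with what the paper says about the deferred argument: the introduction states that ``the key ideas in that proof are similar to those for Theorem~1, but the argument is a longer story,'' and that ``even routine points from the proof of Theorem~\ref{ThmX} are not straightforward'' in the conditional space.  Your identification of the main difficulties --- that the conditioning gives $\gd_\h=\gO(D_\h)$ for free (replacing the role of $M>(1+\eps)(n/r)\log n$), that the remaining parts of $\mR$ need re-verification under conditioning, and that the two-viewpoint argument for Lemma~\ref{LemmaF} (especially viewpoint (B) for \eqref{Ubd}) is where the real work lies --- all match the paper's hints.  Your proposal, however, remains a plan rather than a proof: the two-stage generation of $\bH^*$ and the claim that the second batch is ``up to a $o(\cdot)$-negligible correction'' unconditional are assertions that would need substantial justification, and your own closing acknowledgment that ``pushing the conditioning through this `heart of the argument'\,\ldots\ is exactly where the `longer story' lives'' is an accurate self-assessment.
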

\nin
In other words: for $\vs\ll 1$ there is $\varrho\ll 1$ such that if
$M=(1\pm \vs)(n/r)\log n$ and $\gd_x=(1\pm \vs)\eps \log n$ for each $x$,
then
\[
\Pr\left(\Phi(\bH^*)\leq\left[e^{-(r-1)}\log n\right]^{n/r}e^{-\varrho n}\right)
< \varrho.
\]
(The $n$ here will not be exactly the one in Theorem~\ref{ThmY'}, and will
be renamed $n'$ when we come to use it.)

\mn

For the rest of this section $\bH_T$ is as in Theorem~\ref{ThmY'}.
Our discussion through Lemma~\ref{stoppingLemma} is
adapted from \cite{DK}.

We employ the following standard device for handling the process $\{\bH_t\}$
of Theorems~\ref{ThmY} and \ref{ThmY'}.
Let $\xi_A$, $A\in \K$, be independent random variables, each uniform from $[0,1]$,
and 
set $\bG_\gl=\{ A \in \K\ : \ \xi_{A} \leq \gl \}$.
Members of $\bG_\gl$ are $\gl$-\emph{edges} and we use $d_\gl$ for degree in $\bG_\gl$.
Of course with probability one the $\xi_A$'s are
distinct.
If they \emph{are} distinct---which we assume henceforth---they define the discrete process
$\{\bH_t\}$ in the natural way (add edges $A$ in the order in which the
$\xi_A$'s appear in $[0,1]$).

Fix a small positive $\eps$.
Let $\gdz=\lfloor\eps \log n\rfloor$,
let $g$ be a suitably slow $\go(1)$, and set:
\[\gL =\min\{\gl: \mbox{$\bG_\gl$ has no isolated vertices}\}\]
(so $\bH_T=\bG_\gL$);
\[
\gs = \frac{\log n - g(n)}{\Cc{n-1}{r-1}} ~~\text{and}~~\beta = \frac{\log n + g(n)}{\Cc{n-1}{r-1}};\]
\[
W_{\gs} = \{v \in [n] \ : \ d_\gs (v) < \gd_0\};
\]
and
\[
\mbox{$Y=W_\gs \cup\bigcup\{A:A\in \bG_\gb, A\cap W_\gs\neq \0\}$}.
\]
Parts (b) and (c) of the next lemma are
(a) and (c) of Lemma 5.1 in  \cite{DK}.
\begin{lemma}\label{stoppingLemma}
With the above setup, w.h.p.

\mn
{\rm (a)} $|W_\gs| < n^{2\ga}$, with $\ga\sim \eps\log (e/\eps)$;

\mn
{\rm (b)} $\gL\in (\gs, \beta)$;

\mn
{\rm (c)} in $\bG_\gb$, no edge meets $W_{\gs}$ more than once
and no $u\not\in W_{\gs}$ lies in more than one edge meeting
$Y \setminus \{u\}$.

\end{lemma}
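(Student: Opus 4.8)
The plan is to take parts (b) and (c) directly from Lemma~5.1 of \cite{DK} (where they appear as parts (a) and (c)): the coupling $\bG_\gl$ used here is exactly the one used there, so nothing about those parts needs to be redone. The only genuinely new assertion is (a), and I would prove it by a single Chernoff lower‑tail estimate on one degree, followed by a first‑moment (Markov) bound on $|W_\gs|$.

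For (a): each $\bG_\gl$ is distributed as $\bH_{n,\gl}$, so for a fixed $v\in[n]$ the degree $d_\gs(v)$ is binomial with mean $\mu:=\Cc{n-1}{r-1}\gs=\log n-g(n)$, and $\mu=(1+o(1))\log n$ since $g$ is a slow $\go(1)$. For small fixed $\eps$ and large $n$ we have $\gdz=\lfloor\eps\log n\rfloor<\mu$, so \eqref{eq:ChernoffLower} in its $\varphi$ form gives
\[
\pr(v\in W_\gs)=\pr(d_\gs(v)<\gdz)\ \leq\ \exp\!\big[-\mu\,\varphi(\gdz/\mu-1)\big]\ =\ \exp\!\big[\gdz-\mu-\gdz\log(\gdz/\mu)\big].
\]
Since $\gdz=(1+o(1))\eps\log n$ and $\mu=(1+o(1))\log n$, the bracketed exponent equals $(\eps-1-\eps\log\eps+o(1))\log n=-(1-\ga+o(1))\log n$, where $\ga:=\eps-\eps\log\eps=\eps\log(e/\eps)$; note $\ga>0$ and $\ga\to0$ as $\eps\to0$. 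Hence $\pr(v\in W_\gs)\leq n^{-1+\ga+o(1)}$, so $\E|W_\gs|\leq n^{\ga+o(1)}$, and Markov's Inequality gives $\pr(|W_\gs|\geq n^{2\ga})\leq n^{-\ga+o(1)}=o(1)$, which is (a).

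For orientation I would also recall the standard shape of the \cite{DK} arguments behind (b) and (c). Part (b) is the classical coverage/hitting‑time statement: $\pr(\gL>\gb)=\pr(\bG_\gb\text{ has an isolated vertex})\leq n(1-\gb)^{\Cc{n-1}{r-1}}=(1+o(1))e^{-g(n)}=o(1)$ by a first moment, while $\pr(\gL\leq\gs)=\pr(\bG_\gs\text{ has no isolated vertex})=o(1)$ by a second‑moment argument — the isolation events of two vertices being only negligibly correlated because $\Cc{n-2}{r-2}/\Cc{n-1}{r-1}=O(1/n)$ — and a.s.\ $\gL\notin\{\gs,\gb\}$ since $\gL$ is continuous and $\gs,\gb$ are deterministic. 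Part (c) is a local‑sparsity statement: conditioning on (a) and on the (w.h.p.)\ bound that $\bG_\gb$ has maximum degree $O(\log n)$ gives $|W_\gs|\leq n^{2\ga}$ and $|Y|\leq n^{2\ga+o(1)}$, after which first‑moment/union bounds over pairs kill the two bad configurations (an edge $A\in\bG_\gb$ with $|A\cap W_\gs|\geq2$; a vertex $u\notin W_\gs$ on two $\gb$-edges meeting $Y\sm\{u\}$) at a cost $\leq n^{4\ga-1+o(1)}=o(1)$, since $\ga$ may be taken as small as we please. The only point requiring any care will be the bookkeeping in (a): one must check that the harmless perturbations — the floor in $\gdz$, the $g(n)$ correction in $\mu$, and $\eps$ being a fixed constant while the estimates are asymptotic in $n$ — do not spoil the clean exponent $1-\ga$ with $\ga\sim\eps\log(e/\eps)$; everything else is either quoted or a routine first/second‑moment computation.
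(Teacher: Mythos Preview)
Your proposal is correct and matches the paper's approach essentially verbatim: the paper also cites parts (b) and (c) directly from Lemma~5.1 of \cite{DK}, and for (a) it applies the $\varphi$-form of the Chernoff lower-tail bound (Theorem~\ref{T2.1}) to the binomial $d_\gs(v)$ to get $\pr(v\in W_\gs)<n^{-1+\ga}$ with $\ga\sim\eps\log(e/\eps)$, then finishes by Markov. Your additional sketch of the standard arguments behind (b) and (c) is accurate extra commentary but not needed, since the paper simply quotes \cite{DK}.
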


\nin
\emph{Remarks.}
Once $\gL<\gb$ as in (b), the initial $W_\gs$ in the definition of $Y$
is superfluous.
For Theorem~\ref{ThmY}, $|W_\gs|=o(n)$ in (a) would suffice,
but for Theorem~\ref{ThmY'} we need a little more
(precisely, $|W_\gs|=o(n/\log\log n)$), to make
the bound on $\Phi$ in Theorem~\ref{ThmZ} (in which, again, $|W|$ will
not be exactly the present $n$) an
instance of \eqref{TY'bd};
see \eqref{nvsn'}.

\begin{proof}
[Proof of {\rm (a)}.]
Since (for any $v$) $d_\gs(v)$ is binomial with mean $\mu:=\C{n-1}{r-1}\gs\sim \log n$, Theorem~\ref{T2.1} gives
\beq{prvWgs}
\pr(v\in W_\gs) < \exp[-\mu\vp (-(1-\eps-o(1)))]  < n^{-1+\ga},
\enq
with
$\ga$ as in (a); and (a) then follows \emph{via} Markov's Inequality.
\end{proof}

\mn

It will now be convenient to fix some linear ordering ``$\prec$" of $\K$.
Choose (and condition on)

\mn
\[
W_\gs,
\]
\beq{xiAs'}
\{A\in \bG_\gs: A\cap W_\gs\neq\0\},
\enq
and
the \emph{ordering} of $\{\xi_A:A\cap W_\gs\neq\0, \xi_A>\gs\}$.

Notice that $W_\gs$ and the set in \eqref{xiAs'} are enough to tell us
whether $\gL>\gs$, which by Lemma~\ref{stoppingLemma}
holds w.h.p.
If it does hold---which we now assume---then the above choices determine
$\{A\in \bG_\gL: A\cap W_\gs\neq\0\}$,
so in particular, for each $x\in W_\gs$, the first (under $\prec$)
$\gL$-edge, say $A_x$, containing $x$.
(They do not determine $\gL$, but we don't need this and
avoid conditioning on a zero-probability event.)

By Lemma~\ref{stoppingLemma}, w.h.p.
\beq{whp1}
\mbox{$|W_\gs|< n^{2\ga}$ and the $A_x$'s are distinct and disjoint}
\enq
(if $\gL<\gb$, as in (b) of the lemma, then the $A_x$'s are all in
$\bG_\gb$, so the second part of \eqref{whp1} is contained in (c));
so we assume these properties and set $U=\cup_{x\in W_\gs}A_x\sm W_\gs$.

\mn

Next, choose (and condition on)
\beq{Aings}
\{A\in \bG_\gs:A\cap U\neq \0=A\cap W_\gs\}
\enq
(from \eqref{xiAs'} we already know the members of $\bG_\gs$ that do meet
$W_\gs$).  Set
\beq{n'bd}
W=V\sm (W_\gs\cup U),  ~~~ n'=|W| > n-rn^{2\ga}
\enq
(using the first part of \eqref{whp1}),
and
\[
\bH^* =\bG_\gs[W]
\]
(meaning, as for graphs, the set of edges of $\bG_\gs$ contained in $W$).
Again by Lemma~\ref{stoppingLemma}, w.h.p.
\beq{whp2}
\mbox{no vertex of $W$ lies in more than one $\gs$-edge meeting $W_\gs\cup U$,}
\enq
and we add this assumption to those above.

\mn

Since $\Phi(\bH_T)=\Phi(\bG_\gL)\geq \Phi(\bH^*)$,
Theorem~\ref{ThmY'} will follow from
\beq{h*pm}
\mbox{w.h.p.
$~\Phi(\bH^*)> \left[e^{-(r-1)}\log n\right]^{n/r}e^{-o(n)}$.}
\enq
\nin
We will get this from Theorem~\ref{ThmZ}.

\mn

For $x\in W$ let
\beq{gd0gd}
\gd_x =\gd_0 -|\{A\in \bG_\gs:x\in A,A\cap (W_\gs\cup U)\neq\0\}|
\in \{\gd_0,\gd_0-1\}
\enq
(with the membership assertion given by \eqref{whp2}),
and notice that 
\beq{reduction1}
\mbox{\emph{$\bH^*$ is distributed as $\bH_{W,\gs}$ conditioned on
$\mL:=\{d(x)\geq \gd_x~\forall x\in W\}$}}
\enq
(where $\bH_{W,\gs}$---and $\bH_{W,m}$ below---have the obvious meanings
and 
$d$ is degree in $\bH_{W,\gs}$).

\mn

To complete the reduction to Theorem~\ref{ThmZ} we then just want to
replace $\bH_{W,\gs}$ by a suitable combination of $\bH_{W,m}$'s.
(Note \eqref{gd0gd} says the $\gd_x$'s are as in the theorem.)

\mn

By \eqref{prvWgs} and Harris' Inequality \cite{Harris}
we have
\beq{PrhW}
\pr(\bH_{W,\gs}\models \mL)> (1-n^{-1+\ga})^{n'} ~~~(\sim \exp[-n^\ga]),
\enq
with $\ga\sim \eps\log (e/\eps)$ (as in \eqref{prvWgs}, the substitution of $n'$ for $n$ 
and $\gd_x$ for $\gd_0$ having no significant effect).
On the other hand, with
\[\mu:=\E|\bH_{W,\gs}| =\Cc{n'}{r}\gs ~~ (\sim (n/r)\log n),
\]
$\gc=n^{-1/3}$ (say), and
$I = ((1-\gc)\mu,(1+\gc)\mu)$, Theorem~\ref{T2.1}
gives
\[
\pr(|\bH_{W,\gs}|\not\in I) < \exp[-\gO(n^{1/3})],
\]
which with \eqref{PrhW} implies
$\pr(|\bH_{W,\gs}|\not\in I|\mL)<
\exp[-\gO(n^{1/3})].$

According to Theorem~\ref{ThmZ} there is
\beq{nvsn'}
\Phi^* ~>~ [e^{-(r-1)}\log (n')]^{n'/r}e^{-o(n')}
~=~ [e^{-(r-1)}\log n]^{n/r}e^{-o(n)}
\enq
(the equality follows easily from \eqref{n'bd}) such that
\[
\max_{m\in I}\pr(\Phi(\bH_{W,m})< \Phi^*|\mL) =o(1).
\]
So, finally,
\begin{eqnarray*}
\mbox{$\pr(\Phi(\bH_{W,\gs})< \Phi^*|\mL)$}  &=&
\mbox{$\sum_m \pr(|\bH_{W,\gs}|=m|\mL) \pr(\Phi(\bH_{W,m})< \Phi^*|\mL)$} \\
&<&
\mbox{$\max_{m\in I}\pr(\Phi(\bH_{W,m})< \Phi^*|\mL)+o(1)$}\\
&=& o(1),
\end{eqnarray*}
which gives \eqref{h*pm}.\qed

\mn
\textbf{Acknowledgment.}
Thanks to Eyal Lubetzky for stimulating conversations.

\section{Appendix:  generics}\label{Reg}

Here we prove \eqref{Ri}.
We regard this item as a necessary (actually, for anyone who's gotten this
far surely
\emph{un}necessary) evil and aim to be brief.

\mn

With $D_t=D_{\bH_t}$,
\eqref{Ri} says
\beq{Hisat}
\mbox{w.h.p. $\bH_t$ satisfies \eqref{Rg1}-\eqref{Rg3} with $D=D_t$ for all $t\leq T$.}
\enq
For \eqref{Rg1}, \eqref{Rg3} and the upper bound in \eqref{Rg2}, a naive
union bound will suffice here, as
failure probabilities for individual $t$'s are very small.
A little more care is needed for the lower bound in \eqref{Rg2}, since
for $t$ near $T$ we can only say
\beq{HiRg3}
\pr(\mbox{$\bH_t$ violates \eqref{Rg2}}) < n^{-\ga}
\enq
with $\ga$ some small (positive) constant depending on $\eps$.
But even this is enough:
with $M_t=\C{n}{r}-t$ ($=|\bH_t|$)
and
\[I = \{t: \mbox{$M_t = 2^iM$ for some $i\in \{0,1,\ldots\}$}\}
\]
(recall $M=M_T$),
\eqref{Rg2} holds for all $t\leq T$ if it holds for all $t\in I$,
since then for $t'=\min\{l\in I:l\geq t\}$ we have
$D_t\leq 2D_{t'}$
and $\gd_{\bH_t}\geq \gd_{\bH_{t'}}=\gO(D_{t'})=\gO(D_t)$;
and \eqref{HiRg3} gives
$\sum_{t\in I}\pr(\mbox{$\bH_t$ violates \eqref{Rg2}}) =O(n^{-\ga}\log n)$.

\mn

We proceed to failure probabilities, now
writing $d_t$ for degree in $\bH_t$ and
beginning with
\eqref{Rg1}.
For $W\sub V$, we have
\[
\mbox{$\xi(W):=\sum_{v\in W}d_t(v) =\sum_{j=1}^rj\xi_j(W),$}
\]
where $\xi_j(W):=|\{A\in \bH_t: |A\cap W|=j\}|$ is hypergeometric with mean
\[
\mbox{$\frac{\C{|W|}{j}\C{n-|W|}{r-j}}{\C{n}{r}}M_t.$}
\]
If $\theta=\theta(n)=(\log n)^{-1/3}$ (say) and $|W|=\theta n$, then
$\E\xi_j(W)\sim \theta^j\C{r}{j}M_t$ and
\[
\mu:= \E\xi(W)\sim \E\xi_1(W)\sim \theta rM_t=\theta nD_t >\theta n\log n;
\]
so for $\gl =\theta \mu$,
Theorem~\ref{T2.1} gives $\pr(|\xi_j(W)-\E\xi_j(W)|> \gl)
< \exp[-\gO(\theta^2 \mu)]$ for $j\in [r]$
(with the true value much smaller if $j\neq 1$),
implying
\beq{prxiW}
\pr(|\xi(W)-\mu|> r\gl)< \exp[-\gO(\theta^2 \mu)]
= \exp[-\gO(\theta^3 n\log n)].
\enq
But if \eqref{Rg1} fails (for $\bH_t$) then there \emph{must} be some $W\in \C{V}{\theta n}$ with
$  
|\xi(W)-\mu|>r\gl,
$   
%
and \eqref{prxiW} bounds the probability that this happens by
\[
\Cc{n}{\theta n}e^{-\gO(\theta^3 n\log n)}
<\exp[\theta n\log (e/\theta)-\gO(\theta^3 n\log n)] = e^{-\gO(n)} ~(= n^{-\go(1)}).
\]
This gives \eqref{Rg1}.

\mn

For \eqref{Rg2} we apply Theorems~\ref{T2.1} and \ref{Cher'} to the
$d_t(v)$'s, each of which is hypergeometric with
mean $D_t>(1+\eps)\log n$.
For the upper bound, Theorem~\ref{Cher'} gives (say)
\[
\pr(d_t(v) > 3rD_t) < \exp[-3rD_t\log (3r/e)] < n^{-3r}
\]
so the probability that some $d_t(v)$ exceeds $3rD_t$ is less than $n^{-2r-1}$.
For the lower bound, with
$\gc= \eps/(2\log (1/\eps))$,
a simple calculation using
the first bound in \eqref{eq:ChernoffLower}
(\emph{cf.}\ \eqref{prvWgs};
the weaker second bound will not do here) gives (say)
\beq{absurd}
\pr(d_t(v) <\gc D_t) < n^{-(1+\eps/3)},
\enq
implying \eqref{HiRg3} (and, as discussed above, the lower bound in \eqref{Rg2}).

\mn

Finally, for \eqref{Rg3}:  Each codegree
$d_t(v,w)$ is hypergeometric with mean $(r-1)D_t/(n-1)$; so for $\vs$ with (say)
$1\gg \vs\gg \max\{D_t^{-1},n^{-1/2}\}$,
Theorem~\ref{Cher'} gives
\[
\pr(d_t(v,w)>\vs D_t) < \exp[-\vs D_t\log (e\vs n/r)] = n^{-\go(1)}.
\]


\begin{thebibliography}{99}


\bibitem{Alon-Friedland}
N. Alon and S. Friedland, The maximum number of perfect matchings in
graphs with given degree sequence, \emph{Elec. J. Combin.} \textbf{15} (2008).

\bibitem{AS} N. Alon and J. Spencer,
{\em The Probabilistic Method}, Wiley, New York, 2000.

\bibitem{Alon-Yuster} N. Alon and R. Yuster,
Threshold functions for $H$-factors,
{\em Combin. Probab. Comput.} {\bf 2} (1993), 137-144.




\bibitem{BT}
B. Bollob\'as and A. Thomason,
Random graphs of small order,
pp. 47-97 in \emph{Random graphs '83},
North-Holland, Amsterdam, 1985.


\bibitem{BTth} B. Bollob\'as and A. Thomason, Threshold functions,
{\em Combinatorica} {\bf 7} (1987), 35-38.

\bibitem{Bregman}
L. Br\'egman, Some properties of nonnegative matrices and their
permanents, {\em Math. Doklady} {\bf 14} (1973), 945-949.


\bibitem{CFGS} F.R.K. Chung, P. Frankl, R. Graham
and J.B. Shearer, Some intersection theorems for
ordered sets and graphs,
{\em J. Combinatorial Th. Ser. A.} {\bf 48} (1986),
23-37.

\bibitem{CFMR}
C. Cooper, A. Frieze, M. Molloy and B. Reed,
Perfect matchings in random $r$-regular, $s$-uniform hypergraphs,
{\em Combin. Probab. Comput.} {\bf 5} (1996), 1-14.

\bibitem{Cover-Thomas} T.M.\ Cover and J.A.\ Thomas,
\emph{Information Theory}, Wiley, New York, 1991.

\bibitem{Cuckler-K}
W. Cuckler and J. Kahn,
Entropy bounds for perfect matchings and Hamiltonian cycles,
\emph{Combinatorica} \textbf{29} (2009), 327-335.

\bibitem{Cutler-Radcliffe}  J.\ Cutler and A.J.\ Radcliffe,
An entropy proof of the Kahn-Lov\'asz Theorem,
\emph{Elec.\ J.\ Comb.}
\textbf{18} (2011), Issue 1.


\bibitem{DK}
P. Devlin and J. Kahn,
Perfect fractional matchings in $k$-out hypergraphs,
\emph{Elec.\ J.\ Comb.}
\textbf{24} (2017), Issue 3.



\bibitem{Erdos}
P. Erd\H{o}s,
On the combinatorial problems which I would most like to see solved,
{\em Combinatorica} {\bf 1} (1981), 25-42.

\bibitem{ER}
P. Erd\H{o}s and A. R\'enyi, On the evolution of random graphs,
{\em Publ. Math. Inst. Hungar. Acad. Sci.} {\bf 5} (1960), 17-61.

\bibitem{ERPM}
P. Erd\H{o}s and A. R\'enyi, On the existence of a factor of degree one of
a connected random graph,
{\em Acta Math. Acad. Sci. Hungar.} {\bf 17} (1966), 359-368.


\bibitem{FJ}
A. Frieze and S. Janson,
Perfect matchings in random $s$-uniform hypergraphs,
{\em Random Structures \& Algorithms} {\bf 7} (1995), 41-57.

\bibitem{Frieze-Karonski}
A.\ Frieze and M.\ Karonski, \emph{Introduction to Random Graphs,}
Cambridge Univ.\ Pr., Cambridge, 2016.

\bibitem{Harris} T.E. Harris,
A lower bound on the critical probability in a certain percolation process,
\emph{Proc. Cam. Phil. Soc.} \textbf{56} (1960), 13-20.





\bibitem{Heckel}
A. Heckel, Random triangles in random graphs,
arXiv:1802.08472 [math.CO]




\bibitem{JLR} S. Janson, T. \L uczak and A. Ruci\'nski,
{\em Random Graphs}, Wiley, New York, 2000.

\bibitem{JKV}
A. Johansson, J. Kahn and V. Vu, Factors in random graphs
{\em Random Structures \& Algorithms} {\bf 33} (2008), 1-28.

\bibitem{HT}
J.\ Kahn, Hitting times for Shamir's Problem, in preparation.

\bibitem{Kim} J.H. Kim,
Perfect matchings in random uniform hypergraphs,
{\em Random Structures \& Algorithms} {\bf 23} (2003), 111-132.



\bibitem{Kriv2} M. Krivelevich,
Perfect fractional matchings in random hypergraphs
{\em Random Structures \& Algorithms} {\bf 9} (1996), 317-334.

\bibitem{Riordan}
O. Riordan, Random cliques in random graphs,
arXiv:1802.01948 [math.CO]



\bibitem{Rucinski2} A. Ruci\'nski,
Open problem, in {\em Random Graphs (2)}, Proceedings, Pozna\'n,
1989, A.M. Frieze and T. {\L}uczak, eds.,
John Wiley \& Sons, New York, 284.


\bibitem{SS}
J. Schmidt and E. Shamir,
A threshold for perfect matchings in random $d$-pure
hypergraphs, {\em Disc. Math.} {\bf 45} (1983), 287-295.

\end{thebibliography}
\end{document}